
\documentclass[12pt]{amsart}

\usepackage[latin1]{inputenc}
\usepackage[english]{babel}
\usepackage{indentfirst}
\usepackage{amssymb}
\usepackage{amsthm}
\usepackage{xcolor}
\usepackage[all]{xy}
\usepackage[mathscr]{eucal}


\newcommand{\impli}{\Rightarrow}
\newcommand{\Nat}{\mathbb{N}}
\newcommand{\N}{\mathbb{N}}
\newcommand{\erre}{\mathbb{R}}
\newcommand{\sub}{\subseteq}
\newcommand{\vf}{\varphi}

\newcommand{\cT}{\mathcal{T}}
\newcommand{\cA}{\mathcal{A}}
\def\epsilon{\varepsilon}

\newtheorem{theo}{Theorem}[section]
\newtheorem{lem}[theo]{Lemma}
\newtheorem{pro}[theo]{Proposition}
\newtheorem{cor}[theo]{Corollary}
\newtheorem{defi}[theo]{Definition}
\newtheorem{rem}[theo]{Remark}

\newtheorem{question}[theo]{Question}
\newtheorem{notation}[theo]{Notation}

\numberwithin{equation}{section}

\textwidth=13.5cm


%


\title{$\varepsilon$-weakly precompact sets in Banach spaces}

\author{Jos\'{e} Rodr\'{i}guez}
\address{Dpto. de Ingenier\'{i}a y Tecnolog\'{i}a de Computadores\\Facultad de Inform\'{a}tica\\
Universidad de Murcia\\ 30100 Espinardo (Murcia)\\ Spain} \email{joserr@um.es}

\subjclass[2010]{Primary: 46B50. Secondary: 46G10}

\keywords{$\varepsilon$-weakly precompact set; $\ell_1$-sequence; strongly weakly precompactly generated Banach space; Lebesgue-Bochner space}

\thanks{The research is partially supported by {\em Agencia Estatal de Investigaci\'{o}n} [MTM2017-86182-P, grant cofunded by ERDF, EU] 
and {\em Fundaci\'on S\'eneca} [20797/PI/18]}

\begin{document}

\begin{abstract}
A bounded subset $M$ of a Banach space~$X$ is said to be $\varepsilon$-weakly precompact, for a given $\epsilon\geq 0$, 
if every sequence $(x_n)_{n\in \N}$ in~$M$ admits a subsequence~$(x_{n_k})_{k\in \N}$ such that
$$
	\limsup_{k\to \infty}x^*(x_{n_k})-\liminf_{k\to\infty}x^*(x_{n_k}) \leq \epsilon
$$
for all $x^*\in B_{X^*}$. In this paper we discuss several aspects of $\epsilon$-weakly precompact sets.
On the one hand, we give quantitative versions of the following known results: (a)~the absolutely convex hull of 
a weakly precompact set is weakly precompact (Stegall), and (b)~for any probability measure~$\mu$, the set of all Bochner $\mu$-integrable functions taking values in a
weakly precompact subset of~$X$ is weakly precompact in $L_1(\mu,X)$ (Bourgain, Maurey, Pisier). On the other hand,   
we introduce a relative of a Banach space property considered by Kampoukos and Mercourakis when studying subspaces of strongly weakly compactly generated spaces. 
We say that a Banach space~$X$ has property~$\mathfrak{KM}_w$ if there is a family $\{M_{n,p}:n,p\in \Nat\}$ of subsets of~$X$ such that:
(i)~$M_{n,p}$ is $\frac{1}{p}$-weakly precompact for all $n,p\in \N$, and (ii)~for each weakly precompact set $C \sub X$ and 
for each $p\in \N$ there is $n\in \N$ such that $C \sub M_{n,p}$. All subspaces of strongly weakly precompactly generated spaces 
have property~$\mathfrak{KM}_w$. Among other things, we study the three-space problem and
the stability under unconditional sums of property~$\mathfrak{KM}_w$. 
\end{abstract}

\maketitle

\section{Introduction}

Let $X$ be a real Banach space. A set $M \sub X$ is said to be {\em weakly precompact} (or {\em conditionally weakly compact})
if every sequence in~$M$ admits a weakly Cauchy subsequence. Rosenthal's $\ell_1$-theorem~\cite{ros-J-3} states that a subset of a Banach space
is weakly precompact if (and only if) it is bounded and contains no $\ell_1$-sequence. Recall that a sequence~$(x_n)_{n\in \N}$ in~$X$
is said to be an {\em $\ell_1$-sequence} if it is equivalent to the usual basis of~$\ell_1$, i.e., it is bounded and there is a constant $c>0$ such that
\begin{equation*}
	\left\| \sum_{n=1}^m a_n x_n \right\|\geq c \sum_{n=1}^m |a_n|
\end{equation*}
for all $m\in \N$ and all $a_1,\dots,a_m\in \erre$. In this case, we will say that $(x_n)_{n\in \N}$ is an $\ell_1$-sequence {\em with constant~$c$}.
Behrends~\cite{beh-1} proved a quantitative version of Rosenthal's $\ell_1$-theorem which, loosely speaking, 
says that a bounded set containing $\ell_1$-sequences only with small constant  
must contain a sequence which is close to being weakly Cauchy. To state it properly we need to introduce some terminology.

Let $(x_n)_{n\in \N}$ be a bounded sequence in~$X$. 
We denote by
${\rm clust}_{X^{**}}(x_{n})$ the set of all $w^*$-cluster points of~$(x_n)_{n\in\N}$ in~$X^{**}$,
and we write $\delta(x_{n})$ to denote the diameter of ${\rm clust}_{X^{**}}(x_{n})$. It is easy to check that
\begin{eqnarray*}
	\delta(x_{n}) & = &\sup_{x^*\in B_{X^*}}
	\inf_{m\in \N}\sup_{n,n'\geq m}|x^*(x_n)-x^*(x_{n'})| \\ &=&
	\sup_{x^*\in B_{X^*}} \limsup_{n\to \infty}x^*(x_n)-\liminf_{n\to\infty}x^*(x_n).
\end{eqnarray*}
Note that $\delta(x_n)=0$ if and only if $(x_n)_{n\in \N}$ is weakly Cauchy. In a sense, $\delta(x_n)$ 
measures how far $(x_n)_{n\in \N}$ is from being weakly Cauchy. 
This quantitative approach to non-weakly Cauchy
sequences was considered in \cite{beh-1,gas-2,kac-alt,kal-alt,kal-spu-2,pfi-J}.
An elementary argument shows that {\em if $(x_n)_{n\in \N}$ is an $\ell_1$-sequence with constant~$c>0$, then 
$\delta(x_n)\geq 2c$} (see \cite[Lemma~5(i)]{kal-alt}).
Following \cite{beh-1}, we
say that $(x_n)_{n\in \N}$ {\em admits $\epsilon$-$\ell_1$-blocks}, for a given $\epsilon>0$, if  
for every infinite set $N \sub \N$ there exist a finite set $\{n_1,\dots,n_r\}\sub N$ and $a_1,\dots,a_r\in \erre$
with $\sum_{k=1}^r |a_k|=1$ such that
\begin{equation*}
	\left\| \sum_{k=1}^r a_k x_{n_k} \right\|\leq \epsilon.
\end{equation*}
Clearly, if $(x_n)_{n\in \N}$ admits no $\ell_1$-subsequence with constant~$\epsilon$, then 
$(x_n)_{n\in \N}$ admits $\epsilon$-$\ell_1$-blocks. Conversely, if $(x_n)_{n\in \N}$ 
admits $\epsilon$-$\ell_1$-blocks, then no subsequence of it can be an $\ell_1$-sequence with constant $c>\epsilon$. 
Behrends' theorem (see \cite[Theorem~3.2]{beh-1}) reads as follows:

\begin{theo}[Behrends]\label{theo:Behrends}
Let $X$ be a Banach space, $(x_n)_{n\in \N}$ be a bounded sequence in~$X$ and $\epsilon>0$.
If $(x_n)_{n\in \N}$ admits $\epsilon$-$\ell_1$-blocks, then 
there is a subsequence $(x_{n_k})_{k\in \N}$ such that $\delta(x_{n_k}) \leq 2\epsilon$.
\end{theo}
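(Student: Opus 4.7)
The plan is to establish the contrapositive via a Rosenthal-type combinatorial dichotomy. I will assume that no subsequence $(x_{n_k})$ satisfies $\delta(x_{n_k}) \leq 2\epsilon$ and aim to extract from $(x_n)$ an $\ell_1$-subsequence with some constant $c > \epsilon$; this would contradict the $\epsilon$-$\ell_1$-blocks hypothesis, because on such a subsequence every normalized combination has norm at least $c > \epsilon$, as recorded just before the statement of the theorem.

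For each pair of rationals $r < s$ with $s - r > 2\epsilon$, I consider the disjoint subsets of $B_{X^*}$
\[
A^{r,s}_n := \{x^* \in B_{X^*} : x^*(x_n) < r\}, \qquad B^{r,s}_n := \{x^* \in B_{X^*} : x^*(x_n) > s\},
\]
and apply Rosenthal's combinatorial lemma: any sequence of pairs of disjoint subsets of a set admits a subsequence that is either \emph{convergent} (for every $x^*$, membership in at most one of $A_{n_k}^{r,s}$, $B_{n_k}^{r,s}$ occurs infinitely often) or \emph{Boolean independent} (for every partition $F \cup G$ of a finite index set, $\bigcap_{k \in F} A_{n_k}^{r,s} \cap \bigcap_{k \in G} B_{n_k}^{r,s} \neq \emptyset$). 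A standard diagonal procedure over the countable family of admissible pairs $(r,s)$ then produces a single subsequence $(x_{n_k})$ for which one of the two alternatives holds at each such pair.

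Suppose the independent alternative holds for some pair $(r,s)$. Given scalars $a_1,\dots,a_m$, I split the indices according to the sign of $a_k$ and pick functionals $x^*, y^* \in B_{X^*}$ that realise the independence condition in opposite ways; then $x^* - y^* \in 2 B_{X^*}$, and a direct computation yields
\[
\Bigl\| \sum_{k=1}^m a_k x_{n_k} \Bigr\| \geq \frac{s-r}{2} \sum_{k=1}^m |a_k|,
\]
so $(x_{n_k})$ is an $\ell_1$-sequence with constant $\frac{s-r}{2} > \epsilon$ and the desired contradiction is reached. Otherwise the convergent alternative holds for every admissible pair. If some $x^* \in B_{X^*}$ satisfied $\limsup_k x^*(x_{n_k}) - \liminf_k x^*(x_{n_k}) > 2\epsilon$, one could interpolate rationals $r < s$ strictly between the liminf and the limsup with $s - r > 2\epsilon$, making both $\{k : x^*(x_{n_k}) < r\}$ and $\{k : x^*(x_{n_k}) > s\}$ infinite, contradicting convergence at $(r,s)$. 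Taking the supremum over $x^* \in B_{X^*}$ gives $\delta(x_{n_k}) \leq 2\epsilon$.

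The main obstacle is Rosenthal's combinatorial lemma itself, which does the heavy lifting; once the dichotomy is in hand, the quantitative bookkeeping is the decisive step. The choice $s - r > 2\epsilon$ is exactly what is needed to force an $\ell_1$-constant strictly above $\epsilon$, and this is precisely incompatible with admitting $\epsilon$-$\ell_1$-blocks. This also explains the factor of $2$ appearing in the bound $\delta(x_{n_k}) \leq 2\epsilon$.
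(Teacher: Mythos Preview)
The paper does not prove this theorem; it is stated with attribution to Behrends and a citation to \cite[Theorem~3.2]{beh-1}, and is then used as a black box (notably in Theorem~\ref{theo:Behrends-KPS}). So there is no proof in the paper to compare yours against.

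Your argument is correct: it is Rosenthal's original combinatorial proof of the $\ell_1$-theorem, carried through with the quantitative bookkeeping. Restricting the diagonalisation to rational pairs $(r,s)$ with $s-r>2\epsilon$ is precisely what forces the $\ell_1$-constant $\frac{s-r}{2}$ to exceed~$\epsilon$ in the independent branch and what caps $\delta(x_{n_k})$ at~$2\epsilon$ in the convergent branch; both computations are standard and you handle them properly. One small stylistic point: the ``contrapositive'' framing at the start is unnecessary, since the argument is really a direct dichotomy --- the $\epsilon$-$\ell_1$-blocks hypothesis rules out the independent branch, and the convergent branch delivers the conclusion outright. Behrends' paper \cite{beh-1} is titled \emph{New proofs of Rosenthal's $l^1$-theorem and the Josefson--Nissenzweig theorem}, so his route to the result may well differ from the classical Rosenthal combinatorics you employ, but that does not affect the validity of what you wrote.
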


This leads to the following:

\begin{defi}\label{defi:epsilon-weaklyprecompact}
Let $X$ be a Banach space and $\epsilon\geq 0$. We say that a set $M \sub X$ is {\em $\epsilon$-weakly precompact}
if it is bounded and every sequence $(x_n)_{n\in \N}$ in~$M$ admits a subsequence~$(x_{n_k})_{k\in \N}$ such that
$\delta(x_{n_k})\leq \epsilon$.
\end{defi}

Clearly, a set is $0$-weakly precompact if and only if it is weakly precompact. 

In this paper we study $\epsilon$-weakly precompact sets and Banach spaces generated by them in a strong way
(see Definition~\ref{defi:KMw} below). In order to present our results we need further background.

A Banach space~$X$ is said to be {\em strongly weakly precompactly generated} ({\em SWPG} for short) if there is a weakly precompact set $C_0 \sub X$ with the
following property: for every weakly precompact set $C\sub X$ and for every $\epsilon>0$ there is $n\in \N$ such that $C \sub nC_0+\epsilon B_X$.
This class of spaces was introduced in~\cite{kun-sch} as a natural companion of the class of 
{\em strongly weakly compactly generated} ({\em SWCG} for short) spaces of Schl\"{u}chtermann and Wheeler~\cite{sch-whe}, which are those
satisfying the same condition but replacing weak precompactness with relative weak compactness. Neither of these two classes
is closed under subspaces. Indeed, Mercourakis and Stamati (see \cite[\S3]{mer-sta-2}) gave an example 
of a subspace of the SWCG space $L_1[0,1]$ which is not SWCG. The same example works for the property of being SWPG, 
because a Banach space is SWCG if and only if it is weakly sequentially complete and SWPG (see \cite[Theorem~2.5]{sch-whe} and \cite[Theorem~2.2]{laj-rod-2}).
In an attempt of characterizing subspaces of SWCG spaces, Kampoukos and Mercourakis~\cite{kam-mer2}
studied the following class of Banach spaces (called there spaces having ``property~(*)''):

\begin{defi}\label{defi:KM}
We say that a Banach space $X$ has {\em property~$\mathfrak{KM}$} if
there is a family $\{M_{n,p}:n,p\in \Nat\}$ of subsets of~$X$ such that:
\begin{enumerate}
\item[(i)] $M_{n,p}$ is $\frac{1}{p}$-relatively weakly compact for all $n,p\in \N$.
\item[(ii)] For each relatively weakly compact set $C \sub X$ and for each $p\in \N$ there is $n\in \N$ such that $C \sub M_{n,p}$.
\end{enumerate}
\end{defi}

A bounded set $M\sub X$ is said to be {\em $\epsilon$-relatively weakly compact}, for a given $\epsilon\geq 0$,
if $\overline{M}^{w^*} \sub X+\epsilon B_{X^{**}}$. Property~$\mathfrak{KM}$
stems somehow from a result proved in~\cite{fab-mon-ziz-2}
saying that a Banach space~$X$ is a subspace of a weakly compactly generated Banach space if and only if
there is a family $\{M_{n,p}:n,p\in \Nat\}$ of subsets of~$X$ satisfying condition~(i) of Definition~\ref{defi:KM} such that $X=\bigcup_{n\in \N}M_{n,p}$
for all $p\in \N$. Subspaces of SWCG spaces have property~$\mathfrak{KM}$ (see \cite[Proposition~2.15]{kam-mer2}),
but it is unknown whether the converse holds. 

In this paper we focus on the following related class of Banach spaces:

\begin{defi}\label{defi:KMw}
We say that a Banach space $X$ has {\em property~$\mathfrak{KM}_w$} if
there is a family $\{M_{n,p}:n,p\in \Nat\}$ of subsets of~$X$ such that:
\begin{enumerate}
\item[(i)] $M_{n,p}$ is $\frac{1}{p}$-weakly precompact for all $n,p\in \N$.
\item[(ii)] For each weakly precompact set $C \sub X$ and for each $p\in \N$ there is $n\in \N$ such that $C \sub M_{n,p}$.
\end{enumerate}
\end{defi}

The paper is organized as follows. In Section~\ref{section:epsilon-weaklyprecompact}
we discuss some aspects of $\epsilon$-weakly precompact sets in arbitrary Banach spaces.  On the one hand, we prove
that the absolutely convex hull of an $\epsilon$-weakly precompact set is $2\epsilon$-weakly precompact
(Theorem~\ref{theo:Krein}). This is a quantitative version of the well-known result
of Stegall (see \cite[Addendum]{ros-J-7}) that the absolutely convex hull of a weakly precompact set is weakly precompact.
On the other hand, we investigate $\epsilon$-weak precompactness in the Lebesgue-Bochner space $L_1(\mu,X)$,
where $X$ is a Banach space and $\mu$ is a probability measure. We show that 
if $(f_n)_{n\in \N}$ is a uniformly integrable sequence in~$L_1(\mu,X)$ such that the sequence $(f_n(\omega))_{n\in \N}$ is $\epsilon$-weakly precompact in~$X$
for $\mu$-a.e.~$\omega \in \Omega$, then $(f_n)_{n\in\N}$ is $2\epsilon$-weakly precompact in~$L_1(\mu,X)$ (Theorem~\ref{theo:Bourgain}). 
This is a quantitative version of a result due to Bourgain~\cite{bou-JJ}, Maurey and Pisier~\cite{pis3}.

In Section~\ref{section:propertyM} we study Banach spaces having property~$\mathfrak{KM}_w$. 
This class is closed under subspaces and includes all SWPG spaces (Proposition~\ref{pro:SWPGimpliesKMw})
as well as all spaces having property~$\mathfrak{KM}$ (Theorem~\ref{theo:wsc}). 
It is shown that a Banach space~$X$ has property~$\mathfrak{KM}_w$ whenever there is a subspace $Y \sub X$ not containing isomorphic copies of~$\ell_1$ such that
$X/Y$ has property~$\mathfrak{KM}_w$ (Theorem~\ref{theo:quotient-Mp}).
In Subsection~\ref{subsection:sums} we discuss the stability of this property under (countable) unconditional sums. While property~$\mathfrak{KM}_w$ is preserved
by $\ell_1$-sums (Proposition~\ref{pro:l1-sums}), in general this is not the case for $c_0$-sums or $\ell_p$-sums when $1<p<\infty$.
Indeed, if $E$ is any Banach space with a normalized $1$-unconditional basis and separable dual, and $(X_m)_{m\in \N}$ is a sequence of Banach spaces,
then the space $(\bigoplus_{m\in \N}X_m)_E$ fails property~$\mathfrak{KM}_{w}$ if $X_m$ contains
isomorphic copies of~$\ell_1$ for infinitely many $m\in \N$ (Theorem~\ref{theo:E-sequence}).
This extends some previous results on property~$\mathfrak{KM}$ and subspaces of SWPG spaces
obtained in~\cite{kam-mer2} and~\cite{laj-rod-2}. As an application, 
we show that the Banach space of Batt and Hiermeyer~\cite[\S3]{bat-hie}
fails property~$\mathfrak{KM}_w$ (Corollary~\ref{cor:embedding-l2l1}). Subsection~\ref{subsection:strongLB} 
contains some remarks on property~$\mathfrak{KM}_w$ within the setting of Lebesgue-Bochner spaces. Finally,
in Subsection~\ref{subsection:propertyK} we show that every Banach space having property~$\mathfrak{KM}$
satisfies the so-called property~(K) of Kwapie\'{n} introduced in~\cite{kal-pel} (see Theorem~\ref{theo:KMimpliesK}).

\subsubsection*{Notation and terminology}

Given a set $S$, its cardinality is denoted by $|S|$ and its power set is denoted by~$\mathcal{P}(S)$.
All our Banach spaces are real. By an {\em operator} we mean a continuous linear map between Banach spaces.
By a {\em subspace} of a Banach space we mean a norm closed linear subspace.
The topological dual of a Banach space~$X$ is denoted by~$X^*$ and
we write $w^*$ to denote the weak$^*$-topology on~$X^*$. 
The evaluation of $x^*\in X^*$ at $x\in X$ is denoted by either $x^*(x)$ or $\langle x,x^*\rangle$.
The norm of~$X$ is denoted by $\|\cdot\|_X$ or simply $\|\cdot\|$.
We write $B_X$ to denote the closed unit ball of~$X$, i.e., $B_{X}=\{x\in X:\|x\|\leq 1\}$. 
Given two sets $C_1,C_2 \sub X$, its Minkowski sum is $C_1+C_2:=\{x_1+x_2:\, x_1\in C_1, \, x_2\in C_2\}$.
Given a set $C\sub X$:
\begin{itemize} 
\item ${\rm co}(C)$ (resp., $\overline{{\rm co}}(C)$) denotes the convex (resp., closed convex) hull of~$C$,
\item ${\rm aco}(C)$ (resp., $\overline{{\rm aco}}(C)$) denotes the absolutely convex (resp., closed absolutely convex) hull of~$C$,
\item $\lambda C:=\{\lambda x: x\in C\}$ for every $\lambda \in \erre$, 
\item $\overline{{\rm span}}(C)$ denotes the subspace of~$X$ generated by~$C$.
\end{itemize}
Given a sequence $(x_n)_{n\in \N}$ in~$X$, by a {\em convex block subsequence} of~$(x_n)_{n\in \N}$   
we mean a sequence $(\tilde{x}_k)_{k\in \N}$ of the form
$$
	\tilde{x}_k=\sum_{n\in I_k}a_n x_n,
$$
where the $I_k$'s are finite subsets of~$\N$ with $\max(I_k) < \min(I_{k+1})$ and $(a_n)_{n\in \N}$ is a sequence
of non-negative real numbers such that $\sum_{n\in I_k}a_n=1$ for all~$k \in \N$.

\section{$\epsilon$-weakly precompact sets}\label{section:epsilon-weaklyprecompact}

For the sake of easy reference, the following statement gathers two fundamental
facts on $\epsilon$-weakly precompact sets and $\ell_1$-sequences which were mentioned in the introduction.  
The first one follows from Behrends' Theorem~\ref{theo:Behrends}, while the second one is elementary.
 
\begin{theo}\label{theo:Behrends-KPS}
Let $X$ be a Banach space, $M\sub X$ be a bounded set and $\epsilon>0$. 
\begin{enumerate}
\item[(i)] If $M$ is not is $\epsilon$-weakly precompact, then it contains an $\ell_1$-sequence with constant~$\frac{\epsilon}{2}$. 
\item[(ii)] If $M$ contains an $\ell_1$-sequence with constant~$\epsilon$, then $M$ cannot be $\epsilon'$-weakly precompact for any $0\leq \epsilon'<2\epsilon$.
\end{enumerate}
\end{theo}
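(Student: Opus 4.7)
For part~(i), I would argue the contrapositive and reduce the claim to Behrends' Theorem~\ref{theo:Behrends} applied with parameter $\epsilon/2$. Suppose $M$ contains no $\ell_1$-sequence with constant $\epsilon/2$, and fix an arbitrary sequence $(x_n)_{n\in\N}$ in~$M$. Then no subsequence of $(x_n)_{n\in\N}$ is an $\ell_1$-sequence with constant $\epsilon/2$ either. Unpacking this negation for each infinite $N\subseteq\N$ produces a finite set $\{n_1,\dots,n_r\}\subseteq N$ and scalars $a_1,\dots,a_r\in\erre$, not all zero, with $\|\sum_{k=1}^{r} a_k x_{n_k}\|<\frac{\epsilon}{2}\sum_{k=1}^{r}|a_k|$; rescaling so that $\sum_{k=1}^{r}|a_k|=1$ witnesses that $(x_n)_{n\in\N}$ admits $\frac{\epsilon}{2}$-$\ell_1$-blocks. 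Behrends' Theorem~\ref{theo:Behrends} then yields a subsequence $(x_{n_k})_{k\in\N}$ with $\delta(x_{n_k})\leq 2\cdot\frac{\epsilon}{2}=\epsilon$, showing that $M$ is $\epsilon$-weakly precompact.

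For part~(ii), the argument is essentially a restatement of the elementary inequality $\delta(x_n)\geq 2c$ valid for any $\ell_1$-sequence $(x_n)_{n\in\N}$ with constant $c>0$, already recalled in the introduction (see \cite[Lemma~5(i)]{kal-alt}). If $(x_n)_{n\in\N}\subseteq M$ is an $\ell_1$-sequence with constant~$\epsilon$, any subsequence $(x_{n_k})_{k\in\N}$ is again $\ell_1$ with constant~$\epsilon$, so $\delta(x_{n_k})\geq 2\epsilon>\epsilon'$ for every $\epsilon'<2\epsilon$. No subsequence of $(x_n)_{n\in\N}$ can therefore satisfy $\delta\leq\epsilon'$, ruling out $\epsilon'$-weak precompactness of~$M$.

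The only substantive ingredient is the invocation of Behrends' Theorem in~(i); everything else reduces to unwinding definitions. The one place where a small amount of care is required is the implication "no subsequence is $\ell_1$ with constant $\epsilon/2$" $\Rightarrow$ "admits $\frac{\epsilon}{2}$-$\ell_1$-blocks", where one must check that the quantification over all infinite $N\subseteq\N$ matches on both sides; the strict-vs-non-strict inequality in the definition of admitting blocks is absorbed for free since "$<\frac{\epsilon}{2}$" already implies "$\leq\frac{\epsilon}{2}$". Beyond this bookkeeping I foresee no real obstacle.
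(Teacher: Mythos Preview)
Your proof is correct and follows essentially the same approach as the paper. For~(i), the paper argues directly (starting from a witness sequence with $\delta>\epsilon$ on every subsequence, applying the contrapositive of Behrends' theorem, and then the contrapositive of the implication ``no $\ell_1$-subsequence with constant~$c$ $\Rightarrow$ admits $c$-$\ell_1$-blocks''), while you argue the contrapositive of the whole statement; but the logical content and the single substantive ingredient (Theorem~\ref{theo:Behrends} with parameter~$\epsilon/2$) are identical. For~(ii), your argument matches the paper's verbatim.
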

\begin{proof}
(i) Let $(x_n)_{n\in \N}$ be a sequence in~$M$ such that $\delta(x_{n_k})>\epsilon$ for every subsequence $(x_{n_k})_{k\in \N}$. 
By Theorem~\ref{theo:Behrends}, $(x_n)_{n\in \N}$ does not admit $\frac{\epsilon}{2}$-$\ell_1$-blocks.
Therefore, $(x_n)_{n\in \N}$ admits a subsequence which is an $\ell_1$-sequence with constant~$\frac{\epsilon}{2}$.

(ii) Let $(x_n)_{n\in \N}$ be an $\ell_1$-sequence with constant~$\epsilon$ contained in~$M$. Every
subsequence $(x_{n_k})_{k\in \N}$ is also an $\ell_1$-sequence with constant~$\epsilon$
and therefore $\delta(x_{n_k})\geq 2\epsilon$ (see \cite[Lemma~5(i)]{kal-alt}). It follows
that $M$ cannot be $\epsilon'$-weakly precompact for any $0\leq \epsilon'<2\epsilon$.
\end{proof}

\begin{rem}\label{rem:ball}
If a Banach space~$X$ contains a subspace isomorphic to~$\ell_1$, then $B_X$ cannot be $\epsilon$-weakly precompact for any $0\leq \epsilon< 2$.
\end{rem}
\begin{proof} Fix $0<\eta < 1$. By James' $\ell_1$-distortion theorem (see, e.g., \cite[Theorem~10.3.1]{alb-kal}), there is a normalized
sequence in~$X$ which is an $\ell_1$-sequence with constant $1-\eta$. By Theorem~\ref{theo:Behrends-KPS}(ii), 
$B_X$ cannot be $\epsilon'$-weakly precompact for any $0\leq \epsilon'< 2(1-\eta)$.
\end{proof}

\subsection{Absolutely convex hulls}\label{subsection:aco}

It is known that the absolutely convex hull of a weakly precompact subset of a Banach space is weakly precompact. This was first
proved by Stegall, see \cite[Addendum]{ros-J-7} (cf. \cite[Corollary~1.1.9]{sch-PhD} and \cite[Corollary~B]{ros07}).
The purpose of this subsection is to give a quantitative version of that result, as follows:

\begin{theo}\label{theo:Krein}
Let $X$ be a Banach space, $M\sub X$ and $\epsilon\geq 0$. If
$M$ is $\epsilon$-weakly precompact, then ${\rm aco}(M)$ is $2\epsilon$-weakly precompact.  
\end{theo}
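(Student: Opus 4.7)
The plan is to argue by contraposition, routing through the Behrends-Rosenthal theorem gathered in Theorem~\ref{theo:Behrends-KPS}. Assume ${\rm aco}(M)$ is not $2\epsilon$-weakly precompact. Then Theorem~\ref{theo:Behrends-KPS}(i), applied with $2\epsilon$ in place of~$\epsilon$, produces an $\ell_1$-sequence $(y_n)_{n\in\N}$ in~${\rm aco}(M)$ with constant~$\epsilon$. For each~$n$ fix a representation
\[
	y_n = \sum_{i=1}^{k_n} a_{n,i}\, x_{n,i}, \qquad x_{n,i}\in M, \qquad \sum_{i=1}^{k_n}|a_{n,i}|\leq 1.
\]
Writing $\sigma_{n,i}={\rm sgn}(a_{n,i})\in\{-1,+1\}$ and $\lambda_{n,i}=|a_{n,i}|\geq 0$, each~$y_n$ is a convex combination (after possibly padding with $0$) of the signed vectors $\sigma_{n,i}\, x_{n,i}\in M\cup(-M)$.

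The core of the proof is a quantitative version of Stegall's combinatorial/analytical extraction. I would first pass to a subsequence of $(y_n)$ along which the representations stabilize: by a diagonal selection followed by a Ramsey-type argument, one may assume that the sign patterns, the order of magnitude of the weights $\lambda_{n,i}$, and the position of the heaviest weight all remain constant in~$n$. Using the $\ell_1$-lower bound
\[
	\Big\|\sum_n b_n y_n\Big\| \geq \epsilon \sum_n |b_n| \qquad \text{for every finitely supported }(b_n),
\]
and the stabilized structure of the~$y_n$'s, one singles out for each~$n$ a dominant index~$i(n)$ and verifies that $(\sigma_{n,i(n)}\, x_{n,i(n)})_n$ is an $\ell_1$-sequence inside $M\cup(-M)$ whose constant can be taken arbitrarily close to~$\epsilon$. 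Because $\ell_1$-constants are insensitive to sign flips, and $M\cup(-M)$ is $\epsilon$-weakly precompact whenever~$M$ is (by pigeonhole on subsequences), this supplies an $\ell_1$-sequence genuinely inside~$M$ with some constant $c>\epsilon/2$.

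The conclusion follows from Theorem~\ref{theo:Behrends-KPS}(ii): since $M$ contains an $\ell_1$-sequence with constant $c>\epsilon/2$, $M$ cannot be $\epsilon'$-weakly precompact for any $\epsilon'<2c$; in particular $M$ is not $\epsilon$-weakly precompact, contradicting the hypothesis. The main obstacle is precisely the extraction step in the previous paragraph: in Stegall's original qualitative proof no constants need to be tracked, so one must re-examine the selection scheme carefully and verify that the loss in passing from an $\ell_1$-sequence in~${\rm aco}(M)$ to one genuinely in~$M$ is no worse than a factor of~$2$, which is exactly the factor appearing in the statement.
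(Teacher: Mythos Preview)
Your proposal has a genuine gap: the extraction step that you yourself flag as ``the main obstacle'' is the entire content of the theorem, and you have not proved it. The description ``by a diagonal selection followed by a Ramsey-type argument, one may assume that the sign patterns, the order of magnitude of the weights, and the position of the heaviest weight all remain constant'' is not a proof. When the number $k_n$ of terms in the representation of~$y_n$ is unbounded there is no ``dominant index'' to speak of (all weights may equal $1/k_n$), and even when $k_n$ is constant you give no argument that the selected sequence $(x_{n,i(n)})_n$ satisfies any $\ell_1$-lower estimate: the inequality $\|\sum_n b_n y_n\|\ge \epsilon\sum_n|b_n|$ controls only the \emph{averages}, not any single coordinate. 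Nothing in Stegall's qualitative proof proceeds by picking one term per convex combination, so there is no template to ``re-examine''.

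The paper's proof follows a completely different route, which is operator-theoretic rather than combinatorial. Given an $\ell_1$-sequence $(x_n)$ in ${\rm aco}(M)$ with constant~$C$, one builds an operator $T:X\to L_1[0,1]$ (via $L_\infty[0,1]$, using injectivity) sending $x_n$ to the $n$th Rademacher function~$r_n$, with $\|T\|\le 1/C$. The key analytic ingredient is a quantitative complete-continuity lemma: the inclusion $L_\infty[0,1]\hookrightarrow L_1[0,1]$ sends $\epsilon$-weakly precompact sets to $\epsilon$-\emph{norm}-precompact sets (Lemma~\ref{lem:inclusionoperatorLinfinity}). Combined with the elementary fact that norm-$\epsilon$-precompactness passes to absolutely convex hulls (Lemma~\ref{lem:absconvex-precompact}), one finds that ${\rm aco}(T(M))$ is $\frac{\epsilon}{C}$-precompact in~$L_1[0,1]$. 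But this set contains the Rademachers, which are $1$-separated in~$L_1$, forcing $C\le\epsilon$. Behrends' theorem (Theorem~\ref{theo:Behrends-KPS}(i)) then gives $2\epsilon$-weak precompactness of ${\rm aco}(M)$. Note that the argument never produces an $\ell_1$-sequence inside~$M$; the factor~$2$ arises from Behrends' theorem at the very end, not from any extraction loss.
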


Our approach to Theorem~\ref{theo:Krein} will follow some ideas of Stegall's proof of the case $\epsilon=0$. 
We first need to introduce further terminology and to prove some auxiliary lemmata. 
Given a Banach space~$X$ and a bounded sequence $(x_n)_{n\in \N}$ in~$X$, we write
$$
	{\rm ca}(x_n):=\inf_{m\in \N}\sup_{n,n'\geq m}\|x_n-x_{n'}\|,
$$ 
which is a measure of how far $(x_n)_{n\in \N}$ is from being norm Cauchy. 
The following definition is related in a natural way to the classical measures of non-compactness of Hausdorff
and Kuratowski (see, e.g., \cite[\S 2.5]{kac-alt}):

\begin{defi}\label{defi:epsilon-precompact}
Let $X$ be a Banach space, $M \sub X$ and $\epsilon\geq 0$. We say that $M$
is {\em $\epsilon$-precompact} if it is bounded and every sequence $(x_n)_{n\in \N}$ in~$M$ admits a subsequence~$(x_{n_k})_{k\in \N}$
such that ${\rm ca}(x_{n_k}) \leq \epsilon$.
\end{defi}

\begin{lem}\label{lem:epsilon-compact+}
Let $X$ be a Banach space, $M \sub X$ and $\epsilon\geq 0$.
Then $M$ is $\epsilon$-precompact (resp., $\epsilon$-weakly precompact) if and only if it is 
$\epsilon'$-precompact (resp., $\epsilon'$-weakly precompact) for every $\epsilon'>\epsilon$.
\end{lem}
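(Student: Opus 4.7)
The plan is to handle both equivalences simultaneously by a standard diagonal extraction, since the definitions of ${\rm ca}(x_n)$ and $\delta(x_n)$ share the same ``$\inf_m \sup_{n,n'\ge m}$'' shape. The ``only if'' direction is immediate: any subsequence witnessing $\epsilon$-precompactness (resp.\ $\epsilon$-weak precompactness) automatically witnesses $\epsilon'$-precompactness (resp.\ $\epsilon'$-weak precompactness) for every $\epsilon'\ge \epsilon$, so there is nothing to prove.

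For the nontrivial direction, assume $M$ is $\epsilon'$-precompact (resp.\ $\epsilon'$-weakly precompact) for every $\epsilon'>\epsilon$, and fix a sequence $(x_n)_{n\in \N}$ in~$M$. I would extract inductively a nested chain of subsequences $(x^{(k)}_n)_{n\in \N}$, $k\in \N$, of $(x_n)_{n\in \N}$ such that ${\rm ca}(x^{(k)}_n)\le \epsilon+\frac{1}{k}$ (resp.\ $\delta(x^{(k)}_n)\le \epsilon+\frac{1}{k}$) for every~$k$; the step from $k$ to $k+1$ is just the hypothesis with $\epsilon':=\epsilon+\frac{1}{k+1}$ applied to $(x^{(k)}_n)_{n\in \N}\sub M$. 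I would then form the diagonal $y_k:=x^{(k)}_k$, arranging the bookkeeping in the standard way so that the tail $(y_k)_{k\ge m}$ is a genuine subsequence of $(x^{(m)}_n)_{n\in \N}$ for every~$m$.

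The key point is the elementary fact that both ${\rm ca}$ and $\delta$ are (i)~monotone under passing to subsequences and (ii)~invariant under removing finitely many initial terms. Both (i) and (ii) are immediate from the ``$\inf_m \sup_{n,n'\ge m}$'' formula for $\delta$ recalled in the introduction, and analogously for~${\rm ca}$. Combining them with the diagonal construction gives
$$
{\rm ca}(y_k)={\rm ca}\bigl((y_k)_{k\ge m}\bigr) \le {\rm ca}(x^{(m)}_n) \le \epsilon+\frac{1}{m}
$$
for every $m\in \N$ (and the identical inequality with $\delta$ in place of~${\rm ca}$), whence ${\rm ca}(y_k)\le \epsilon$ (resp.\ $\delta(y_k)\le \epsilon$), as required.

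No substantive obstacle is anticipated; this is the classical ``intersect over $\epsilon+\frac{1}{k}$'' diagonal trick ubiquitous in quantitative compactness arguments. The only thing that genuinely needs to be verified is the tail invariance of ${\rm ca}$ and~$\delta$, without which one would only obtain the bound $\epsilon+\frac{1}{m}$ on a tail of the diagonal rather than on the whole sequence; but this invariance is built into the definitions.
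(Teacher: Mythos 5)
Your proof is correct and is exactly the argument the paper has in mind: its entire proof reads ``the `only if' part is obvious and the `if' part follows from a standard diagonalization argument,'' and you have simply written out that diagonal extraction in full, including the two properties (monotonicity under subsequences and tail invariance of ${\rm ca}$ and $\delta$) that make it work.
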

\begin{proof}
The ``only if'' part is obvious and the ``if'' part follows from a standard diagonalization argument.
\end{proof}

The following lemma is a quantitative version of Mazur's classical result that the absolutely convex hull of a relatively norm compact subset of a Banach space
is relatively norm compact (see, e.g., \cite[p.~51, Theorem~12]{die-uhl-J}).

\begin{lem}\label{lem:absconvex-precompact}
Let $X$ be a Banach space, $M \sub X$ and $\epsilon\geq 0$.
If $M$ is $\epsilon$-precompact, then ${\rm aco}(M)$ is $\epsilon$-precompact.
\end{lem}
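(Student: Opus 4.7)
The plan is to reduce $\epsilon$-precompactness to a quantitative ``finite net'' statement and then imitate the standard Mazur argument. Specifically, I will first establish the characterization that a bounded set $N \sub X$ is $\epsilon$-precompact if and only if for every $\eta > 0$ it admits a finite $(\epsilon+\eta)$-net. The ``if'' direction is a pigeonhole/diagonal argument: given a sequence in~$N$, for each $p\in \N$ extract a subsequence lying in a single ball of radius $\epsilon+\frac{1}{p}$ around some point of a fixed $(\epsilon+\frac{1}{p})$-net, then diagonalize to obtain a subsequence $(x_{n_k})$ with ${\rm ca}(x_{n_k})\leq 2\epsilon$; to sharpen from $2\epsilon$ to $\epsilon$, apply the same argument with $\epsilon$ replaced by $\frac{\epsilon+\eta}{2}$, or invoke Lemma~\ref{lem:epsilon-compact+}. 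The ``only if'' direction is the standard contrapositive: if no finite $(\epsilon+\eta)$-net exists, recursively pick $x_{n+1}$ at distance $>\epsilon+\eta$ from $\{x_1,\dots,x_n\}$, producing a sequence whose every subsequence has ${\rm ca}\geq \epsilon+\eta$.

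Assuming this characterization, fix $\eta>0$ and let $F=\{y_1,\dots,y_k\}\sub X$ be a finite $(\epsilon+\eta)$-net of~$M$. The set ${\rm aco}(F)$ is the image of the unit ball of the finite-dimensional space $(\erre^k,\|\cdot\|_1)$ under the continuous linear map $(a_1,\dots,a_k)\mapsto \sum_{i=1}^k a_i y_i$, hence it is norm compact; thus we can choose a finite $\eta$-net $G\sub{\rm aco}(F)$ of~${\rm aco}(F)$.

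Now pick any $x\in{\rm aco}(M)$ and write $x=\sum_{j=1}^r a_j m_j$ with $m_j\in M$ and $\sum_{j=1}^r|a_j|\leq 1$. For each~$j$ choose $f_j\in F$ with $\|m_j-f_j\|\leq \epsilon+\eta$; setting $\tilde x:=\sum_{j=1}^r a_j f_j\in{\rm aco}(F)$ gives
$$
\|x-\tilde x\|\leq \sum_{j=1}^r |a_j|\,\|m_j-f_j\|\leq \epsilon+\eta,
$$
and then some $g\in G$ satisfies $\|\tilde x-g\|\leq \eta$, so $\|x-g\|\leq \epsilon+2\eta$. Hence $G$ is a finite $(\epsilon+2\eta)$-net for~${\rm aco}(M)$. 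Since $\eta>0$ was arbitrary, the characterization yields that ${\rm aco}(M)$ is $(\epsilon+\eta')$-precompact for every $\eta'>0$, and Lemma~\ref{lem:epsilon-compact+} concludes that ${\rm aco}(M)$ is $\epsilon$-precompact.

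The only delicate point is the characterization of $\epsilon$-precompactness by finite $(\epsilon+\eta)$-nets; once this is in hand, the rest is a direct quantitative rerun of Mazur's proof, and the linearity of the error estimate under absolutely convex combinations is exactly what makes the constant carry over without loss.
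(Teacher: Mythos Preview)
Your overall route (extract a finite net of~$M$, pass to the compact absolutely convex hull of the net, push the error through absolutely convex combinations) is exactly the paper's. The gap is the ``if'' direction of your characterization: it is false. Take $M=\{e_n:n\in\N\}\subset\ell_2$. The singleton $\{0\}$ is a $1$-net of~$M$ (since $\|e_n\|=1$), so $M$ admits a finite $(1+\eta)$-net for every $\eta>0$; yet every subsequence of $(e_n)_{n\in\N}$ has ${\rm ca}=\sqrt{2}>1$, so $M$ is not $1$-precompact. Your own pigeonhole sketch already sees the problem: a subsequence trapped in a single ball of radius $\epsilon+\tfrac{1}{p}$ only has \emph{diameter} at most $2(\epsilon+\tfrac{1}{p})$, which after diagonalizing gives ${\rm ca}\leq 2\epsilon$, not $\leq\epsilon$. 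Neither of your proposed sharpenings works: you have no $(\tfrac{\epsilon+\eta}{2})$-nets at your disposal, and Lemma~\ref{lem:epsilon-compact+} passes from ``$\epsilon'$-precompact for all $\epsilon'>\epsilon$'' to ``$\epsilon$-precompact'', not from $2\epsilon$ to~$\epsilon$. (The equivalence that does hold is the Kuratowski one: $M$ is $\epsilon$-precompact iff for every $\eta>0$ it is covered by finitely many sets of \emph{diameter} at most $\epsilon+\eta$; the Hausdorff net condition you use is genuinely weaker by a factor of~$2$.)

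The paper does not isolate a characterization. It proves the ``only if'' direction exactly as you do (if no finite $(\epsilon+\eta)$-net exists, build an $(\epsilon+\eta)$-separated sequence), and then, instead of producing a finite net of ${\rm aco}(M)$ and appealing to a converse, it argues directly on sequences: given $(y_n)$ in ${\rm aco}(M)$ it chooses $(z_n)$ in the compact set $K={\rm aco}(F)$ with $\|y_n-z_n\|\leq\epsilon+\eta$, and then uses compactness of~$K$ to extract a subsequence along which the $z_{n_k}$ are $\eta$-close to each other.
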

\begin{proof}
Fix $\epsilon'>\epsilon$ and take $\eta>0$ small enough such that $\epsilon+2\eta \leq \epsilon'$. Observe that there is a finite set $F \sub X$ such that 
$$
	\sup_{x\in M}\min_{x'\in F}\|x-x'\| \leq \epsilon+\eta.
$$
Indeed, otherwise we could construct by induction a sequence $(x_n)_{n\in \N}$ in~$M$
such that $\|x_n-x_m\|>\epsilon+\eta$ whenever $n\neq m$, and therefore ${\rm ca}(x_{n_k})\geq \epsilon+\eta$
for every subsequence $(x_{n_k})_{k\in \N}$, which contradicts that $M$ is $\epsilon$-precompact.

Then $K:={\rm aco}(F)$ is norm compact and 
\begin{equation}\label{eqn:aco}
	\sup_{x\in {\rm aco}(M)}\min_{x'\in K}\|x-x'\|\leq \epsilon+\eta.
\end{equation}
Now let $(y_n)_{n\in \N}$ be a sequence in ${\rm aco}(M)$. By~\eqref{eqn:aco}, there is a sequence
$(z_n)_{n\in \N}$ in~$K$ such that $\|y_n-z_n\|\leq \epsilon+\eta$ for all $n\in \N$. Since $K$ is norm compact, $(z_n)_{n\in \N}$ admits
a subsequence $(z_{n_k})_{k\in \N}$ such that $\|z_{n_k}-z_{n_{k'}}\|\leq \eta$
and so $\|y_{n_k}-y_{n_{k'}}\|\leq \epsilon+2\eta\leq\epsilon'$ for all $k,k'\in \N$.
Hence ${\rm ca}(y_{n_k})\leq \epsilon'$. This shows that ${\rm aco}(M)$ is $\epsilon'$-precompact.
As $\epsilon'>\epsilon$ is arbitrary, ${\rm aco}(M)$ is $\epsilon$-precompact (Lemma~\ref{lem:epsilon-compact+}).
\end{proof}

We will also need the following simple lemma.

\begin{lem}\label{lem:operator-compact}
Let $T:X\to Y$ be an operator between the Banach spaces $X$ and~$Y$, $M \sub X$ and $\epsilon\geq 0$. If $M$
is $\epsilon$-precompact (resp., $\epsilon$-weakly precompact), then $T(M)$ is $\|T\|\epsilon$-precompact
(resp., $\|T\|\epsilon$-weakly precompact). 
\end{lem}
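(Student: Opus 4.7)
The plan is to treat the two cases separately but essentially observe that $T$ is $\|T\|$-Lipschitz in norm, while $T^*$ maps $B_{Y^*}$ into $\|T\| B_{X^*}$ in the weak-star setting. In both cases, given a sequence in $T(M)$ we lift to a sequence $(x_n)_{n\in \N}$ in $M$ with $T(x_n)$ being the chosen sequence, extract a subsequence $(x_{n_k})_{k\in \N}$ using the hypothesis on~$M$, and then transfer the estimate on the chosen measure of non-Cauchyness to~$(T(x_{n_k}))_{k\in \N}$.

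For the precompact case, assume $M$ is $\epsilon$-precompact and pick $(x_{n_k})_{k\in \N}$ with ${\rm ca}(x_{n_k})\leq \epsilon$. Since $\|T(x_{n_k})-T(x_{n_{k'}})\|\leq \|T\|\cdot\|x_{n_k}-x_{n_{k'}}\|$, the definition of ${\rm ca}$ immediately yields
$$
	{\rm ca}(T(x_{n_k}))\leq \|T\|\cdot {\rm ca}(x_{n_k})\leq \|T\|\epsilon.
$$

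For the weakly precompact case, assume $M$ is $\epsilon$-weakly precompact, and without loss of generality $\|T\|>0$ (if $T=0$ there is nothing to prove). Pick $(x_{n_k})_{k\in \N}$ with $\delta(x_{n_k})\leq \epsilon$. Using the dual description of~$\delta$ recalled in the introduction, for every $y^*\in B_{Y^*}$ we have $T^*y^*\in X^*$ with $\|T^*y^*\|\leq \|T\|$, hence $x^*:=T^*y^*/\|T\|\in B_{X^*}$; then
$$
	\limsup_{k\to\infty}y^*(T(x_{n_k}))-\liminf_{k\to\infty}y^*(T(x_{n_k}))=\|T\|\Bigl(\limsup_{k\to\infty}x^*(x_{n_k})-\liminf_{k\to\infty}x^*(x_{n_k})\Bigr).
$$
Taking the supremum over $y^*\in B_{Y^*}$ gives $\delta(T(x_{n_k}))\leq \|T\|\delta(x_{n_k})\leq \|T\|\epsilon$, and boundedness of $T(M)$ is clear. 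There is no real obstacle here: the only thing to be mindful of is the trivial case $\|T\|=0$ when one would otherwise divide by zero.
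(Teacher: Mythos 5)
Your proof is correct and follows essentially the same route as the paper, whose one-line proof simply invokes the two inequalities ${\rm ca}(T(x_n)) \leq \|T\|\,{\rm ca}(x_n)$ and $\delta(T(x_n)) \leq \|T\|\,\delta(x_n)$ that you verify in detail (via the Lipschitz estimate and the adjoint $T^*$, respectively, with the $\|T\|=0$ case correctly set aside). Nothing is missing.
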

\begin{proof}
Just bear in mind that, for any bounded sequence $(x_n)_{n\in \N}$ in~$X$, we have 
${\rm ca}(T(x_n)) \leq \|T\|{\rm ca}(x_n)$ and $\delta(T(x_n)) \leq \|T\|\delta(x_n)$.
\end{proof}

Given a compact Hausdorff topological space~$K$ and a regular Borel probability measure $\mu$ on~$K$,
the operator $i_\mu:C(K) \to L_1(\mu)$ that sends each function to its equivalence class is completely continuous (i.e.,
it maps weakly Cauchy sequences to norm convergent sequences), as an immediate consequence of Lebesgue's dominated convergence theorem.
We next provide a quantitative version of this fact.

\begin{lem}\label{lem:inclusionoperator}
Let $K$ be a compact Hausdorff topological space, $\mu$ be a regular Borel probability measure on~$K$, and 
$\epsilon\geq 0$. If $M \sub C(K)$ is $\epsilon$-weakly precompact, then 
$i_\mu(M)$ is $\epsilon$-precompact in~$L_1(\mu)$.
\end{lem}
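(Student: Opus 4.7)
The plan is to extract a subsequence along which the $L_1(\mu)$-Cauchy defect is controlled by $\epsilon$, exploiting the fact that pointwise evaluations at points of $K$ belong to $B_{C(K)^*}$, so $\epsilon$-weak precompactness yields a tight bound on the pointwise oscillation.

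First, given a sequence $(f_n)_{n\in\N}$ in $M$, I use the $\epsilon$-weak precompactness of $M$ to pass to a subsequence, which I still denote $(f_n)_{n\in\N}$, satisfying $\delta(f_n)\le\epsilon$. Since for every $t\in K$ the Dirac evaluation $\delta_t$ lies in $B_{C(K)^*}$, the pointwise oscillation
\[
\omega(t):=\limsup_{n\to\infty}f_n(t)-\liminf_{n\to\infty}f_n(t)
\]
satisfies $\omega(t)\le\delta(f_n)\le\epsilon$ for all $t\in K$.

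Second, I introduce the auxiliary functions
\[
\varphi_m(t):=\sup_{k,k'\ge m}|f_k(t)-f_{k'}(t)|,\qquad t\in K,\ m\in\N.
\]
Each $\varphi_m$ is a countable supremum of continuous functions and is therefore lower semicontinuous, hence Borel measurable. The sequence $(\varphi_m)_{m\in\N}$ is pointwise decreasing with limit $\omega$, and is uniformly bounded by $2C$, where $C:=\sup_{n\in\N}\|f_n\|_\infty<\infty$ since $M$ is bounded.

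Third, I apply Lebesgue's dominated convergence theorem (using the constant dominant $2C$ together with the fact that $\mu$ is finite) to conclude
\[
\lim_{m\to\infty}\int_K \varphi_m\, d\mu=\int_K \omega\, d\mu\le \epsilon.
\]
Since $\|f_k-f_{k'}\|_{L_1(\mu)}\le \int_K \varphi_m\, d\mu$ for all $k,k'\ge m$, this gives $\mathrm{ca}(i_\mu(f_n))\le \int_K \varphi_m\, d\mu$ in the limit, so $\mathrm{ca}(i_\mu(f_n))\le \epsilon'$ for every $\epsilon'>\epsilon$. Consequently, for each $\epsilon'>\epsilon$ the set $i_\mu(M)$ is $\epsilon'$-precompact, and an appeal to Lemma~\ref{lem:epsilon-compact+} finishes the argument.

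The only point requiring a small amount of care is the measurability of $\varphi_m$; this is why continuity of the $f_n$'s matters (the uncountable sup over $K$ would be meaningless, but the countable sup over indices $k,k'\ge m$ of continuous functions is automatically Borel). Once measurability is in hand, the rest is a direct application of dominated convergence — no genuine obstacle arises, and no passing to further subsequences is needed beyond the initial one.
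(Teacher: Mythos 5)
Your proof is correct, and it takes a genuinely different --- and more elementary --- route than the paper's. The paper fixes $\epsilon'>\epsilon$ and invokes a quantitative Egorov-type result of Ka\v{c}ena, Kalenda and Spurn\'{y} (\cite[Proposition~9.1]{kac-alt}): it extracts a closed set $L \sub K$ with $\mu(K\setminus L)\leq \eta$ on which the restrictions satisfy ${\rm ca}(f_n|_L)\leq \delta(f_n)$ in the sup-norm of $C(L)$, and then splits the integral into the small-measure part (bounded by $2\alpha\eta$) and the part over~$L$. You instead observe that for the $L_1$-conclusion only the \emph{integrated pointwise} oscillation matters: since the Dirac evaluations lie in $B_{C(K)^*}$, the oscillation function $\omega$ is bounded by $\delta(f_n)\leq\epsilon$ everywhere, and your decreasing measurable envelopes $\varphi_m$ (lower semicontinuous as countable suprema of continuous functions, so Borel) together with dominated convergence give ${\rm ca}(i_\mu(f_n))\leq \inf_m \int_K \varphi_m\, d\mu = \int_K \omega\, d\mu \leq \epsilon$ outright --- note this is the exact bound $\epsilon$, so your closing $\epsilon'$-detour and the appeal to Lemma~\ref{lem:epsilon-compact+} are actually superfluous, though harmless. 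Your argument buys self-containedness (no external quantitative Egorov) and slightly more generality: it never uses regularity of~$\mu$, whereas the paper's extraction of the closed set~$L$ does. What the paper's approach buys in exchange is uniform sup-norm control on a large closed set, which is stronger information than needed here but consistent with the toolkit of~\cite{kac-alt}. One caveat: your reliance on point evaluations ties the argument to $C(K)$, so the companion Lemma~\ref{lem:inclusionoperatorLinfinity} for $L_\infty(\mu)$ would still require the Stone-space reduction, since $L_\infty(\mu)$ has no Dirac functionals.
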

\begin{proof}
By Lemma~\ref{lem:epsilon-compact+}, it suffices to
check that $i_\mu(M)$ is $\epsilon'$-precompact for every $\epsilon'>\epsilon$.
Write $\alpha:=\sup\{\|f\|_{C(K)}: f \in M\}$ and choose $\eta>0$ small enough such that 
$2\alpha\eta+\epsilon\leq \epsilon'$. Let $(f_n)_{n\in \N}$ be a sequence in~$M$.
Since $M$ is $\epsilon$-weakly precompact, by passing to a subsequence we can assume that $\delta(f_n)\leq \epsilon$.
By \cite[Proposition~9.1]{kac-alt}, there is a closed set $L \sub K$ such that $\mu(K\setminus L)\leq \eta$
and the sequence of restrictions $(f_n|_L)_{n\in \N}$ in~$C(L)$ satisfies
${\rm ca}(f_n|_{L}) \leq \delta(f_n)$, hence ${\rm ca}(f_n|_{L}) \leq \epsilon$. 
Observe that for each $k,k'\in \N$ we have
\begin{multline*}
	\|i_\mu(f_{k})-i_\mu(f_{k'})\|_{L_1(\mu)} = \int_K|f_{k}-f_{k'}|\, d\mu \\ =
	\int_{K\setminus L}|f_{k}-f_{k'}|\, d\mu+\int_L|f_{k}-f_{k'}|\, d\mu  
	 \leq  2\alpha \eta + \big\|f_{k}|_L-f_{k'}|_L\big\|_{C(L)}, 
\end{multline*}
hence 
$$
	{\rm ca}(i_\mu(f_n)) \leq 2\alpha\eta +{\rm ca}(f_n|_{L}) \leq 2\alpha\eta+\epsilon \leq \epsilon'.
$$
This shows that $i_\mu(M)$ is $\epsilon'$-precompact for every $\epsilon'> \epsilon$.
\end{proof}

\begin{lem}\label{lem:inclusionoperatorLinfinity}
Let $(\Omega,\Sigma,\mu)$ be a probability space, $j_\mu:L_\infty(\mu) \to L_1(\mu)$ be the inclusion operator, and $\epsilon\geq 0$.
If $M \sub L_\infty(\mu)$ is $\epsilon$-weakly precompact, then 
$j_\mu(M)$ is $\epsilon$-precompact in~$L_1(\mu)$.
\end{lem}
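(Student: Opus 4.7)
The plan is to reduce Lemma~\ref{lem:inclusionoperatorLinfinity} to the $C(K)$ case already handled in Lemma~\ref{lem:inclusionoperator}, via the Gelfand representation. Since $L_\infty(\mu)$ is a commutative unital $C^*$-algebra (equivalently, a Dedekind complete Banach lattice with a strong order unit), it is isometrically $\ast$-isomorphic to $C(K)$ for some compact Hausdorff space $K$; let $\Phi:L_\infty(\mu)\to C(K)$ denote such an isomorphism. The positive linear functional $f\mapsto \int_\Omega f\,d\mu$ on $L_\infty(\mu)$ transfers through $\Phi^{-1}$ to a positive linear functional on $C(K)$, which by the Riesz representation theorem is integration against a uniquely determined regular Borel probability measure $\nu$ on~$K$.

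The crucial observation is that $\Phi$ is a lattice isomorphism (it preserves $|\cdot|$, being a $C^*$-isomorphism), so
$$
\int_\Omega |f|\, d\mu \;=\; \int_K |\Phi(f)|\, d\nu \qquad\text{for every } f\in L_\infty(\mu).
$$
Applied to differences $f-g$ with $f,g\in L_\infty(\mu)$, this gives
$$
\|j_\mu(f)-j_\mu(g)\|_{L_1(\mu)} \;=\; \|i_\nu(\Phi(f))-i_\nu(\Phi(g))\|_{L_1(\nu)},
$$
so for any bounded sequence $(f_n)_{n\in\N}$ in $L_\infty(\mu)$ we have the identity ${\rm ca}(j_\mu(f_n))={\rm ca}(i_\nu(\Phi(f_n)))$.

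Now suppose $M\sub L_\infty(\mu)$ is $\epsilon$-weakly precompact. Since $\Phi$ is an isometric isomorphism (and so is $\Phi^{-1}$), two applications of Lemma~\ref{lem:operator-compact} show that $\Phi(M)$ is $\epsilon$-weakly precompact in $C(K)$. Lemma~\ref{lem:inclusionoperator} then ensures that $i_\nu(\Phi(M))$ is $\epsilon$-precompact in $L_1(\nu)$. Given any sequence $(f_n)_{n\in\N}$ in $M$, we extract a subsequence $(f_{n_k})_{k\in\N}$ with ${\rm ca}(i_\nu(\Phi(f_{n_k})))\le \epsilon$; by the identity above, ${\rm ca}(j_\mu(f_{n_k}))\le \epsilon$. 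Hence $j_\mu(M)$ is $\epsilon$-precompact in~$L_1(\mu)$, as required.

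The argument is essentially bookkeeping once the Gelfand correspondence is in place, so the only real subtlety—and the one I would be most careful to verify—is that $\Phi$ intertwines the $L^1$-geometry correctly, i.e., that the probability measure $\nu$ produced by Riesz representation genuinely pulls back to $\mu$ in the sense that $\|\Phi(f)\|_{L_1(\nu)}=\|f\|_{L_1(\mu)}$ for all $f\in L_\infty(\mu)$. This follows from the lattice/$C^*$-structure, but it is the one step where a careless identification would break the proof.
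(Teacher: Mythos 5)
Your proof is correct and takes essentially the same route as the paper: the paper represents $L_\infty(\mu)$ as $C(K)$ via the Stone space of the measure algebra (which is exactly the Gelfand/Kakutani spectrum you use), with the induced regular Borel probability $\tilde{\mu}$ playing the role of your~$\nu$, and then applies Lemma~\ref{lem:inclusionoperator} together with Lemma~\ref{lem:operator-compact} just as you do. Your hand-verified identity $\|\Phi(f)\|_{L_1(\nu)}=\|f\|_{L_1(\mu)}$ is precisely what the paper's commuting diagram with the isometric isomorphisms $I_\infty$ and $I_1$ encodes, so the step you flag as the main subtlety is indeed the crux, and you have justified it correctly.
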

\begin{proof}
Let $K$ be the Stone space of the measure algebra of~$\mu$ and let $\tilde{\mu}$ be the regular Borel probability on~$K$
induced by~$\mu$. Then there exist isometric isomorphisms $I_\infty: L_\infty(\mu) \to C(K)$ and $I_1: L_1(\tilde{\mu})\to L_1(\mu)$
such that the diagram
$$
	\xymatrix@R=3pc@C=3pc{L_\infty(\mu)
	\ar[r]^{j_\mu} \ar[d]_{I_\infty} & L_1(\mu)\\
	C(K)  \ar[r]^{i_{\tilde{\mu}}}  & L_1(\tilde{\mu}) \ar[u]_{I_1} \\
	}
$$
commutes.
Since $M$ is $\epsilon$-weakly precompact and $\|I_\infty\|=1$, the set 
$I_\infty(M)$ is $\epsilon$-weakly precompact (Lemma~\ref{lem:operator-compact}). Now, we can apply Lemma~\ref{lem:inclusionoperator} to
deduce that $i_{\tilde{\mu}}(I_\infty(M))$ is $\epsilon$-precompact. Since $\|I_1\|=1$, we conclude
that $j_\mu(M)$ is $\epsilon$-precompact (Lemma~\ref{lem:operator-compact} again).
\end{proof}

We are now ready to prove Theorem~\ref{theo:Krein}.

\begin{proof}[Proof of Theorem~\ref{theo:Krein}]
By Lemma~\ref{lem:epsilon-compact+}, it suffices to
check that ${\rm aco}(M)$ is $\epsilon'$-weakly precompact for every $\epsilon'>2\epsilon$.
To this end we will apply Theorem~\ref{theo:Behrends-KPS}(i), that is, we will show that if $(x_n)_{n\in \N}$ is 
an $\ell_1$-sequence with constant~$C>0$ contained in~${\rm aco}(M)$, then $C<\frac{\epsilon'}{2}$.

Let  $(r_n)_{n\in\N}$ be the sequence of Rademacher functions on~$[0,1]$ and let 
$$
	T_0:\overline{{\rm span}}(\{x_n:n\in \N\}) \to L_\infty[0,1]
$$ 
be the unique operator satisfying $T_0(x_n)=r_n$ for all $n\in \N$, so that $\|T_0\|\leq \frac{1}{C}$. 
Since $L_\infty[0,1]$ is isometrically injective (see, e.g., \cite[Proposition~4.3.8(ii)]{alb-kal}), 
$T_0$ extends to an operator $\tilde{T}_0:X \to L_\infty[0,1]$ such that $\|\tilde{T}_0\|=\|T_0\|$.
Define 
$$
	T:=j \circ \tilde{T}_0: X \to L_1[0,1],
$$
where $j:L_\infty[0,1]\to L_1[0,1]$ is the inclusion operator. 

On the one hand, $r_n=T(x_n) \in T({\rm aco}(M))={\rm aco}(T(M))$ for all $n\in \N$
and we have $\|r_n-r_{n'}\|_{L_1[0,1]}=1$ whenever $n\neq n'$.
Therefore, ${\rm aco}(T(M))$ cannot be $\epsilon''$-precompact for any $0\leq \epsilon''< 1$.

On the other hand, since $M$ is $\epsilon$-weakly precompact and $\|\tilde{T}_0\|\leq \frac{1}{C}$, the set $\tilde{T}_0(M)$ is $\frac{\epsilon}{C}$-weakly precompact 
in~$L_\infty[0,1]$ (Lemma~\ref{lem:operator-compact}). Now, Lemma~\ref{lem:inclusionoperatorLinfinity} ensures that $T(M)$ is $\frac{\epsilon}{C}$-precompact
in $L_1[0,1]$ and therefore ${\rm aco}(T(M))$ is $\frac{\epsilon}{C}$-precompact as well (Lemma~\ref{lem:absconvex-precompact}).
It follows that $1\leq \frac{\epsilon}{C}$ and so $C < \frac{\epsilon'}{2}$, as we wanted.
\end{proof}

\begin{question}\label{question:aco}
Is constant $2$ optimal in Theorem~\ref{theo:Krein}?
\end{question}

\subsection{Lebesgue-Bochner spaces}\label{subsection:LB}

Let $(\Omega,\Sigma,\mu)$ be a probability space and $X$ be a Banach space.
The characteristic function of any $A\in \Sigma$ is denoted by~$\chi_A$. 
Given a function $f:\Omega \to X$, we denote by $\|f(\cdot)\|_X$ the real-valued function on~$\Omega$
defined by $\omega \mapsto \|f(\omega)\|_X$. As usual, $L_\infty(\mu,X)$ is the Banach space of all
(equivalence classes of) strongly $\mu$-measurable functions $f:\Omega \to X$ which are $\mu$-essentially bounded, equipped with the norm 
$$
	\|f\|_{L_\infty(\mu,X)}:=\big\| \|f(\cdot)\|_X \big\|_{L_\infty(\mu)}. 
$$
We denote by $L_1(\mu,X)$ the Banach space of all (equivalence classes of) 
Bochner $\mu$-integrable functions $f:\Omega \to X$, equipped with the norm 
$$
	\|f\|_{L_1(\mu,X)}:=\int_\Omega \|f(\cdot)\|_X \, d\mu.
$$
A set $W\sub L_1(\mu,X)$ is said to be {\em uniformly integrable} if it is bounded
and for every $\epsilon>0$ there is $\delta>0$ such that $\sup_{f\in W}\int_A \|f(\cdot)\|_X \, d\mu\leq \epsilon$
for every $A\in \Sigma$ with $\mu(A)\leq\delta$. The simplest example of a uniformly integrable set is 
$$
	L(M):=\{f\in L_1(\mu,X): \, f(\omega)\in M \text{ for $\mu$-a.e. $\omega \in \Omega$}\} 
$$
for a bounded set $M \sub X$. Every weakly precompact subset of $L_1(\mu,X)$ is
uniformly integrable (see, e.g., \cite[p.~104, Theorem~4]{die-uhl-J}), but the converse does not hold in general.
The most penetrating study of weak precompactness in Lebesgue-Bochner spaces was
made by Talagrand~\cite{tal11}. Here we will focus on an earlier result proved independently by Bourgain (see \cite[Theorem~8]{bou-JJ}), Maurey and Pisier~\cite{pis3}:  
{\em if $(f_n)_{n\in \N}$ is a uniformly integrable sequence in~$L_1(\mu,X)$ such that the sequence
$(f_n(\omega))_{n\in \N}$ is weakly precompact in~$X$
for $\mu$-a.e. $\omega \in \Omega$, then $(f_n)_{n\in\N}$ is weakly precompact in~$L_1(\mu,X)$ (cf. \cite[Corollary~10]{tal11}).}
The purpose of this subsection is to prove the following quantitative version of the Bourgain-Maurey-Pisier theorem:

\begin{theo}\label{theo:Bourgain}
Let $(\Omega,\Sigma,\mu)$ be a probability space, $X$ be a Banach space, and $\epsilon\geq 0$.
Let $F:\Omega \to \mathcal{P}(X)$ be a multi-function such that $F(\omega)$ is $\epsilon$-weakly precompact
for $\mu$-a.e. $\omega \in \Omega$. Write
$$
	S_{1}(F):=\{f\in L_1(\mu,X): \, f(\omega)\in F(\omega) \text{ for $\mu$-a.e. $\omega\in \Omega$}\}
$$
to denote the set of all (equivalence classes of) Bochner $\mu$-integrable selectors of~$F$. Then:
\begin{enumerate}
\item[(i)] If $(f_n)_{n\in \N}$ is a uniformly integrable sequence in
$S_{1}(F)$, then $(f_n)_{n\in\N}$ cannot be an $\ell_1$-sequence with constant~$C>\epsilon$.
\item[(ii)] Every uniformly integrable subset of $S_{1}(F)$ is $2\epsilon$-weakly precompact in $L_1(\mu,X)$. 
\end{enumerate}
\end{theo}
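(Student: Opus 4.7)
The plan is to derive (ii) from (i) and concentrate the effort on (i).

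\emph{Reduction of (ii) to (i).} Let $W\sub S_{1}(F)$ be uniformly integrable. By Lemma~\ref{lem:epsilon-compact+}, it suffices to show that $W$ is $2\epsilon'$-weakly precompact for every $\epsilon'>\epsilon$. If this failed for some $\epsilon'>\epsilon$, then Theorem~\ref{theo:Behrends-KPS}(i) would produce an $\ell_1$-sequence in $W$ with constant $\epsilon'>\epsilon$; as a sequence in the uniformly integrable set $W\sub S_1(F)$, it is itself uniformly integrable and lies in $S_{1}(F)$, contradicting~(i).

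\emph{Proving (i).} I would argue by contradiction, assuming $(f_n)_{n\in\N}$ is a uniformly integrable $\ell_1$-sequence in $S_{1}(F)$ with constant $C>\epsilon$, and follow the template of the proof of Theorem~\ref{theo:Krein}. The $\ell_1$-condition gives $T_0:\overline{\mathrm{span}}\{f_n\}\to L_\infty[0,1]$ with $T_0(f_n)=r_n$ and $\|T_0\|\leq 1/C$; by injectivity of $L_\infty[0,1]$ extend to $\tilde T:L_1(\mu,X)\to L_\infty[0,1]$ with the same norm, and set $T:=j\circ\tilde T:L_1(\mu,X)\to L_1[0,1]$, so that $T(f_n)=r_n$ in $L_1[0,1]$.

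The core step is to exploit the pointwise $\epsilon$-weak precompactness of $F(\omega)$ to force $C\leq\epsilon$, using uniform integrability as the bridge between the global $L_1(\mu,X)$-structure and the pointwise $X$-structure. Given $\eta>0$, uniform integrability supplies $M$ with $\sup_n \int_{\{\|f_n(\cdot)\|_X>M\}}\|f_n(\cdot)\|_X\, d\mu<\eta$; truncating $g_n(\omega):=f_n(\omega)\chi_{\{\|f_n(\omega)\|_X\leq M\}}$ produces a sequence bounded by $M$ in $L_\infty(\mu,X)$ with $g_n(\omega)\in F(\omega)\cup\{0\}$ a.e.\ and $\|f_n-g_n\|_{L_1(\mu,X)}<\eta$, so that $(g_n)$ remains an $\ell_1$-sequence with constant arbitrarily close to $C$. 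I would then invoke a Bochner version of Lemma~\ref{lem:inclusionoperatorLinfinity}: a bounded subset of $L_\infty(\mu,X)$ whose pointwise values lie in an $\epsilon$-weakly precompact multi-function becomes $\epsilon$-precompact under the inclusion into $L_1(\mu,X)$. Granting this, $\{g_n\}$ is $\epsilon$-precompact in $L_1(\mu,X)$, hence by Lemma~\ref{lem:absconvex-precompact} so is $\mathrm{aco}\{g_n\}$; but $T(\{g_n\})$ is within $\eta/C$ of $\{r_n\}$ in $L_1[0,1]$, and $\mathrm{aco}\{r_n\}$ cannot be $\epsilon''$-precompact for any $\epsilon''<1$. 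Running the final estimate of Theorem~\ref{theo:Krein}'s proof with Lemma~\ref{lem:operator-compact} then forces $1\leq \epsilon/C + O(\eta)$, and letting $\eta\to 0$ gives $C\leq\epsilon$.

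\emph{Main obstacle.} The hardest ingredient is precisely the Bochner analogue of Lemma~\ref{lem:inclusionoperatorLinfinity}. The scalar proof rested on the $C(K)$-representation of $L_\infty(\mu)$ together with \cite[Proposition~9.1]{kac-alt} to obtain uniform convergence off a small set; in the vector-valued case $L_\infty(\mu,X)\not\cong C(K,X)$ in general, and one cannot extract a subsequence that is pointwise weakly Cauchy almost everywhere simultaneously. Circumventing this is exactly the content of the Bourgain--Maurey--Pisier argument, which combines measurable selection with a careful subsequence extraction controlled by convex blocks. The factor of $2$ in the conclusion $2\epsilon$ is expected to enter through Theorem~\ref{theo:Krein} applied pointwise to $F(\omega)$, since convex combinations of $(f_n(\omega))$ land in the $2\epsilon$-weakly precompact set $\overline{\mathrm{aco}}(F(\omega))$.
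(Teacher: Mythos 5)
Your reduction of (ii) to (i) is correct and coincides with the paper's, and your truncation step in (i) matches the paper's reduction of the general case to the uniformly $L_\infty$-bounded one. But the step you yourself flag as the main obstacle is a genuine, unfixable gap: the ``Bochner version of Lemma~\ref{lem:inclusionoperatorLinfinity}'' with a \emph{pointwise} hypothesis is false, already for $X=\erre$ and $\epsilon=0$. Take $\mu$ Lebesgue on $[0,1]$, $F(\omega)\equiv[-1,1]$ (compact, hence $0$-weakly precompact), and the Rademacher sequence $(r_n)$: it is bounded in $L_\infty[0,1]$ with all values in $F(\omega)$, yet $\|r_n-r_m\|_{L_1}=1$ for $n\neq m$, so $\{r_n\}$ is not $\epsilon''$-precompact in $L_1[0,1]$ for any $\epsilon''<1$. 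The point is that Lemma~\ref{lem:inclusionoperatorLinfinity} assumes $\epsilon$-weak precompactness of $M$ \emph{as a subset of the function space} $L_\infty(\mu)$ (its proof rests on \cite[Proposition~9.1]{kac-alt}, a property of the set $M$ itself, via the $C(K)$-representation), which is far stronger than any condition on the pointwise values $f(\omega)$. Note also that your posited lemma would prove too much: applied directly to your truncated sequence $(g_n)$, which is an $\ell_1$-sequence with constant close to $C$ and hence $2(C-\eta)$-separated in norm, it would yield the contradiction $2(C-\eta)\leq\epsilon$ instantly --- no extension by injectivity, no Rademacher operator $T$, and no Lemma~\ref{lem:absconvex-precompact} would be needed. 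Under the pointwise hypothesis the only available conclusion is ($2\epsilon$-)\emph{weak} precompactness in $L_1(\mu,X)$, which is precisely the theorem being proved, so this route is circular at its core; it is not that the lemma is hard and ``circumvented'' by Bourgain--Maurey--Pisier, it is that no such lemma holds.

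The paper's actual argument avoids norm compactness entirely and works with $w^*$-cluster points instead: for the truncated, uniformly $L_\infty$-bounded sequence it tensorizes with the Rademachers at the level of values, setting $h_n(\omega):=r_n\otimes f_n(\omega)\in Y:=L_1([0,1],X)$. Lemma~\ref{lem:BH} --- proved using that $(r_{n_k})$ is weakly null in $L_1[0,1]$ together with the oscillation estimate of Lemma~\ref{lem:limsup-liminf} --- gives ${\rm clust}_{Y^{**}}(h_n(\omega))\sub\epsilon B_{Y^{**}}$ for a.e.~$\omega$; the Fatou-type Lemma~\ref{lem:simple} integrates this to ${\rm clust}_{L_1(\mu,Y)^{**}}(h_n)\sub\epsilon B_{L_1(\mu,Y)^{**}}$; the canonical isometry $L_1(\mu,Y)\cong L_1([0,1],L_1(\mu,X))$ identifies $h_n$ with $r_n\otimes f_n$; and Lemma~\ref{lem:BH-0} then shows $(f_n)$ cannot be an $\ell_1$-sequence with constant $C>\epsilon$. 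Finally, your closing remark about the provenance of the factor $2$ is also off: it does not come from Theorem~\ref{theo:Krein} applied pointwise to $F(\omega)$, but from Behrends' theorem, via Theorem~\ref{theo:Behrends-KPS}(i), in passing from (i) to (ii).
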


When applied to constant multi-functions, the previous theorem yields:

\begin{cor}\label{cor:Bourgain}
Let $(\Omega,\Sigma,\mu)$ be a probability space, $X$ be a Banach space, and $\epsilon\geq 0$. If $M \sub X$ is $\epsilon$-weakly precompact, then 
$L(M)$ is $2\epsilon$-weakly precompact in~$L_1(\mu,X)$.
\end{cor}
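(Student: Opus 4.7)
The plan is to deduce this corollary as a direct specialization of Theorem~\ref{theo:Bourgain}(ii) to the constant multi-function $F:\Omega\to\mathcal{P}(X)$ defined by $F(\omega):=M$ for every $\omega\in\Omega$. With this choice, the hypothesis that $F(\omega)$ is $\epsilon$-weakly precompact for $\mu$-a.e.~$\omega$ holds trivially (in fact everywhere), and the set of Bochner integrable selectors $S_{1}(F)$ coincides, as an unraveling of definitions, with $L(M)$.

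The only real content is then to verify the remaining hypothesis of Theorem~\ref{theo:Bourgain}(ii): that $L(M)$ is uniformly integrable as a subset of $L_1(\mu,X)$. This is where I would spend a sentence or two. Since $M$ is $\epsilon$-weakly precompact it is in particular bounded, so one can set $K:=\sup_{x\in M}\|x\|_X<\infty$. Then for every $f\in L(M)$ the inequality $\|f(\omega)\|_X\leq K$ holds for $\mu$-a.e.~$\omega$, which gives the uniform bound $\|f\|_{L_1(\mu,X)}\leq K$ and, more importantly, the estimate
$$
	\int_A \|f(\cdot)\|_X\,d\mu \leq K\mu(A)
$$
for every $A\in\Sigma$ and every $f\in L(M)$. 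Thus, given any $\eta>0$ the choice $\delta:=\eta/K$ (or any positive number if $K=0$) witnesses the uniform-integrability condition. Hence $L(M)$ is uniformly integrable, and Theorem~\ref{theo:Bourgain}(ii) applied to $F$ yields that $L(M)=S_{1}(F)$ is $2\epsilon$-weakly precompact in~$L_1(\mu,X)$.

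There is no substantial obstacle here; the argument is essentially a bookkeeping step, with the only thing to watch being that uniform integrability needs to be explicitly checked before Theorem~\ref{theo:Bourgain}(ii) can be invoked, and that $\epsilon$-weak precompactness of $M$ is used both to supply the selector hypothesis and (via boundedness) to supply uniform integrability.
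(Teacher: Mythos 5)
Your proposal is correct and follows exactly the paper's route: the paper derives Corollary~\ref{cor:Bourgain} by applying Theorem~\ref{theo:Bourgain}(ii) to the constant multi-function $F(\omega):=M$, relying on the fact (noted earlier in Subsection~\ref{subsection:LB}) that $L(M)$ is uniformly integrable for bounded~$M$. Your explicit verification of uniform integrability via $K:=\sup_{x\in M}\|x\|_X$ is precisely the bookkeeping the paper leaves implicit, so there is nothing to add.
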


To prove Theorem~\ref{theo:Bourgain} we will follow 
the approach to the Bourgain-Maurey-Pisier theorem which can be found in \cite[\S2.2]{cem-men}. 
We need some previous lemmata. The first one is a straightforward application of Fatou's lemma.

\begin{lem}\label{lem:Fatou}
Let $(\Omega,\Sigma,\mu)$ be a probability space and $(g_n)_{n\in \N}$ be a bounded sequence in~$L_\infty(\mu)$.
Then 
$$
	\int_\Omega \liminf_{n\to \infty}g_n\, d\mu \leq \liminf_{n\to \infty} \int_\Omega g_n\, d\mu
	\leq \limsup_{n\to \infty} \int_\Omega g_n\, d\mu \leq
	\int_\Omega \limsup_{n\to \infty}g_n\, d\mu.
$$
\end{lem}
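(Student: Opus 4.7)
The plan is to apply the classical Fatou lemma twice, to two appropriately shifted non-negative sequences, with the middle inequality being trivial. Since $(g_n)_{n\in\N}$ is bounded in $L_\infty(\mu)$, I fix $M>0$ with $\|g_n\|_{L_\infty(\mu)}\leq M$ for all $n$. Choosing representatives agreeing off a single null set (possible since only countably many functions are involved), I may assume $|g_n(\omega)|\leq M$ for every $n\in\N$ and every $\omega\in\Omega$. In particular, the measurable functions $\liminf_n g_n$ and $\limsup_n g_n$ take values in $[-M,M]$ and are therefore integrable.

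First I would apply Fatou's lemma to the non-negative sequence $(g_n+M)_{n\in\N}$, obtaining
$$
	\int_\Omega \liminf_{n\to\infty}(g_n+M)\, d\mu \leq \liminf_{n\to\infty}\int_\Omega (g_n+M)\, d\mu,
$$
and cancelling the constant $M$ from both sides yields the leftmost inequality. Then I would apply Fatou to the non-negative sequence $(M-g_n)_{n\in\N}$ and use the elementary identity $\liminf_n(-a_n)=-\limsup_n a_n$ for bounded real sequences $(a_n)$: this converts the resulting inequality into
$$
	\limsup_{n\to\infty}\int_\Omega g_n\, d\mu \leq \int_\Omega \limsup_{n\to\infty}g_n\, d\mu,
$$
which is the rightmost inequality.

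The middle inequality $\liminf_n \int_\Omega g_n\, d\mu \leq \limsup_n \int_\Omega g_n\, d\mu$ holds for any sequence of real numbers. There is no genuine obstacle here; the only minor bookkeeping is the initial choice of representatives so that the uniform bound $|g_n|\leq M$ holds simultaneously off one null set, which is why the $L_\infty$ boundedness hypothesis is used rather than merely pointwise a.e.\ boundedness of each individual~$g_n$.
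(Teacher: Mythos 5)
Your proof is correct and is essentially identical to the paper's argument: the paper also sets $C:=\sup_{n\in\N}\|g_n\|_{L_\infty(\mu)}$ and applies the classical Fatou lemma to the two non-negative sequences $(C+g_n)_{n\in\N}$ and $(C-g_n)_{n\in\N}$. Your version merely spells out the bookkeeping (choice of representatives off a single null set, the identity $\liminf_n(-a_n)=-\limsup_n a_n$) that the paper leaves implicit.
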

\begin{proof}
Write $C:=\sup_{n\in \N}\|g_n\|_{L_\infty(\mu)}<\infty$.
Now, we can apply Fatou's lemma to the sequences $(C+g_n)_{n\in \N}$ and $(C-g_n)_{n\in \N}$ to get 
the desired inequalities.
\end{proof}

\begin{lem}\label{lem:limsup-liminf}
Let $(\Omega,\Sigma,\mu)$ be a probability space, $(h_n)_{n\in \N}$ be a bounded sequence in~$L_\infty(\mu)$, and 
$\epsilon\geq 0$. If 
$$
	\limsup_{n\to \infty}h_n-\liminf_{n\to \infty}h_n \leq \epsilon\quad \text{$\mu$-a.e.,} 
$$
then
$$
	\limsup_{m\to \infty}\int_\Omega \left|h_m-\liminf_{n\to \infty}h_n\right| \, d\mu \leq \epsilon.
$$
\end{lem}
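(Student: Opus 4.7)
Write $g:=\liminf_{n\to\infty}h_n$ and $G:=\limsup_{n\to\infty}h_n$, so $g,G\in L_\infty(\mu)$ and $G-g\le\epsilon$ $\mu$-a.e. by hypothesis. The goal is to control $\int|h_m-g|\,d\mu$ in the limsup as $m\to\infty$, and the natural route is to push the limsup inside the integral via Lemma~\ref{lem:Fatou} after bounding $\limsup_m |h_m-g|$ pointwise.

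The key decomposition is
\[
|h_m-g|=(h_m-g)^{+}+(g-h_m)^{+}.
\]
For the first summand, since $t\mapsto t^{+}$ is continuous and nondecreasing and $\limsup_m (h_m-g)=G-g\ge 0$ $\mu$-a.e., one has
\[
\limsup_{m\to\infty}(h_m-g)^{+}=(G-g)^{+}=G-g\le\epsilon\quad\text{$\mu$-a.e.}
\]
For the second summand, note that $\limsup_m (g-h_m)=g-\liminf_m h_m=0$ $\mu$-a.e.; taking positive parts (again using that $t^{+}$ is continuous and nondecreasing, and the value $0$ of the limsup is nonnegative) yields
\[
\limsup_{m\to\infty}(g-h_m)^{+}=0\quad\text{$\mu$-a.e.}
\]
Adding these two inequalities gives $\limsup_{m\to\infty}|h_m-g|\le\epsilon$ $\mu$-a.e.

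Since $(|h_m-g|)_{m\in\N}$ is bounded in $L_\infty(\mu)$, Lemma~\ref{lem:Fatou} applies and yields
\[
\limsup_{m\to\infty}\int_\Omega |h_m-g|\,d\mu\;\le\;\int_\Omega\limsup_{m\to\infty}|h_m-g|\,d\mu\;\le\;\epsilon,
\]
which is the desired conclusion.

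The main subtlety is the pointwise step $\limsup_m (g-h_m)^{+}=0$: one must be careful that the identity $\limsup_m a_m^{+}=(\limsup_m a_m)^{+}$ requires $\limsup_m a_m\ge 0$, which is exactly what is available here because the limsup equals $0$ a.e. Everything else is routine: the decomposition into positive parts and the final application of the preceding Fatou-type lemma.
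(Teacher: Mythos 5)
Your proof is correct, and it shares the paper's overall skeleton: both arguments reduce the claim to the pointwise bound $\limsup_{m\to\infty}|h_m-g|\leq \epsilon$ $\mu$-a.e.\ (where $g:=\liminf_{n\to\infty}h_n$) and then conclude by applying Lemma~\ref{lem:Fatou} to the bounded sequence $(|h_m-g|)_{m\in\N}$. Where you differ is in how the pointwise bound is verified. The paper sets $g_m:=|h_m-g|$, observes $\liminf_{m\to\infty}g_m=0$, and uses the oscillation identity $\limsup_m g_m-\liminf_m g_m=\inf_{n}\sup_{m,m'\geq n}|g_m-g_{m'}|$ together with the reverse triangle inequality $\bigl||a|-|b|\bigr|\leq|a-b|$ to get $\limsup_m g_m\leq \limsup_m h_m-\liminf_m h_m\leq\epsilon$; you instead decompose $|h_m-g|=(h_m-g)^{+}+(g-h_m)^{+}$, compute the limsup of each summand via the composition rule for a continuous nondecreasing function, and add using subadditivity of $\limsup$. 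Both computations are elementary and equally rigorous; yours trades the oscillation identity for the positive-part decomposition, which is arguably more transparent. One small correction: the ``subtlety'' you flag at the end is not actually needed. The identity $\limsup_m a_m^{+}=(\limsup_m a_m)^{+}$ holds for \emph{every} bounded real sequence, with no sign restriction on the limsup, precisely because $t\mapsto t^{+}$ is continuous and nondecreasing (and if $\limsup_m a_m<0$ then $a_m^{+}=0$ eventually, so both sides vanish); so your second pointwise step goes through without any extra care.
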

\begin{proof}
Write 
$$
	g_m:=\left|h_m-\liminf_{n\to \infty}h_n\right|
	\quad\text{for all $m\in \N$.}
$$
Then $(g_m)_{m\in \N}$ is a bounded sequence in~$L_\infty(\mu)$ with $\liminf_{m\to\infty}g_m=0$. We have
\begin{multline*}
	\limsup_{m\to \infty} g_m=
	\limsup_{m\to \infty} g_m - \liminf_{m\to \infty} g_m  = 
	\inf_{n\in \N}\sup_{m,m'\geq n}|g_m-g_{m'}|
	\\  \leq 
	\inf_{n\in \N}\sup_{m,m'\geq n}|h_m-h_{m'}|
	=
	\limsup_{m\to \infty} h_m - \liminf_{m\to \infty} h_{m} \leq \epsilon
	\quad\text{$\mu$-a.e.}
\end{multline*}
Therefore, from Lemma~\ref{lem:Fatou} it follows that
$$
	\limsup_{m\to \infty}\int_\Omega g_m \, d\mu \leq \int_\Omega \limsup_{m\to \infty} g_m \, d\mu \leq \epsilon,
$$
as required.
\end{proof}

Let $(\Omega,\Sigma,\mu)$ be a probability space and $X$ be a Banach space. 
Recall that a function $\varphi:\Omega \to X^*$ is said to be {\em $w^*$-scalarly $\mu$-measurable}
if for every $x\in X$ the composition $\langle x,\varphi(\cdot) \rangle:\Omega \to \mathbb R$ is $\mu$-measurable.
It is known that any element of $L_1(\mu,X)^*$ can be identified with a $w^*$-scalarly $\mu$-measurable function 
$\varphi: \Omega \to X^*$ in such a way that $\|\varphi(\cdot)\|_{X^*} \in L_\infty(\mu)$ and 
$\|\varphi\|_{L_1(\mu,X)^*}=\|\|\varphi(\cdot)\|_{X^*}\|_{L_\infty(\mu)}$,
the duality being
$$
	\langle h,\varphi \rangle=\int_\Omega \langle h(\cdot),\varphi(\cdot)\rangle \, d\mu
	\quad
	\text{for all $h\in L_1(\mu,X)$}
$$
(see, e.g., \cite[Theorem~1.5.4]{cem-men}). We will use this representation
of $L_1(\mu,X)^*$ in the proofs of Lemmas~\ref{lem:BH} and~\ref{lem:simple} below.

\begin{notation}\label{notation:tensor}
\rm Given $x\in X$ and $f\in L_1(\mu)$, we write $f \otimes x \in L_1(\mu,X)$ to 
denote the (equivalence class of the) function defined by 
$$
	(f\otimes x)(\omega):=f(\omega)x
	\quad \text{for $\mu$-a.e. $\omega \in \Omega$.}
$$
Observe that $\|f\otimes x\|_{L_1(\mu,X)} = \|f\|_{L_1(\mu)}\|x\|_X$.
\end{notation}

Throughout the rest of this subsection the unit interval $[0,1]$ is equipped with the Lebesgue measure
and we denote by $(r_n)_{n\in \N}$ the sequence of Rademacher functions on~$[0,1]$. 

\begin{lem}\label{lem:BH-0}
Let $Z$ be a Banach space and $(z_n)_{n\in \N}$ be an $\ell_1$-sequence in~$Z$ with constant~$C>0$. 
Then $(r_n\otimes z_n)_{n\in \N}$ is an $\ell_1$-sequence in~$L_1([0,1],Z)$ with constant~$C$
and so
$$
	{\rm clust}_{L_1([0,1],Z)^{**}}(r_n\otimes z_n) \not\subseteq \epsilon B_{L_1([0,1],Z)^{**}} 
$$ 
for any $0\leq \epsilon<C$.
\end{lem}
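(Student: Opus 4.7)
The plan is to treat the two assertions separately and invoke the elementary facts already recalled in the introduction.

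For the first claim, I would simply compute the Bochner norm using the definition of the tensor notation. Given scalars $a_1,\dots,a_m\in\erre$,
$$
	\left\|\sum_{n=1}^m a_n (r_n\otimes z_n)\right\|_{L_1([0,1],Z)}
	= \int_0^1 \left\|\sum_{n=1}^m a_n r_n(t)\, z_n\right\|_Z\, dt.
$$
The key observation is that $|r_n(t)|=1$ for almost every $t\in[0,1]$, so the scalars $b_n(t):=a_n r_n(t)$ satisfy $|b_n(t)|=|a_n|$. Applying the hypothesis that $(z_n)_{n\in\N}$ is an $\ell_1$-sequence with constant~$C$ yields $\big\|\sum_n b_n(t)\, z_n\big\|_Z \geq C\sum_n |a_n|$ pointwise almost everywhere, and integrating over $[0,1]$ delivers the required lower bound. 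Boundedness of $(r_n\otimes z_n)_{n\in\N}$ is automatic from $\|r_n\otimes z_n\|_{L_1([0,1],Z)}=\|z_n\|_Z$ together with the boundedness of $(z_n)_{n\in\N}$.

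For the cluster-set statement, I would apply the elementary fact recalled in the introduction (see \cite[Lemma~5(i)]{kal-alt}) to the $\ell_1$-sequence $(r_n\otimes z_n)_{n\in\N}$ with constant~$C$, obtaining $\delta(r_n\otimes z_n) \geq 2C$; that is, the diameter of ${\rm clust}_{L_1([0,1],Z)^{**}}(r_n\otimes z_n)$ is at least $2C$. Now fix $0\leq \epsilon < C$. If the cluster set were contained in $\epsilon B_{L_1([0,1],Z)^{**}}$, then by the triangle inequality its diameter would be at most $2\epsilon < 2C$, a contradiction. Hence some $w^*$-cluster point has norm strictly greater than $\epsilon$, proving the claimed non-inclusion.

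The proof is essentially routine and I do not anticipate a real obstacle: the first part is a one-line computation exploiting the pointwise modulus-one property of the Rademacher functions, and the second part is a clean triangle-inequality step converting a diameter bound into a non-containment in a ball.
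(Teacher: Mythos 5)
Your proposal is correct and takes essentially the same approach as the paper: the pointwise computation via $|r_n(t)|=1$ for a.e.\ $t$ is exactly the ``simple computation'' the paper delegates to the proof of Proposition~2.2.1 in~\cite{cem-men}, and the second assertion is obtained in both cases from $\delta(r_n\otimes z_n)\geq 2C$ via \cite[Lemma~5(i)]{kal-alt} together with the observation that a subset of $\epsilon B_{L_1([0,1],Z)^{**}}$ has diameter at most $2\epsilon<2C$. No gaps; you have merely written out the steps the paper cites or declares obvious.
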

\begin{proof}
The first statement follows from a simple computation 
(see, e.g., the proof of Proposition~2.2.1 in~\cite{cem-men}). We have $\delta(r_n\otimes z_n) \geq 2C$
by~\cite[Lemma~5(i)]{kal-alt}, which clearly implies the second statement.
\end{proof}

\begin{lem}\label{lem:BH}
Let $Z$ be a Banach space, $M \sub Z$ and $\epsilon\geq 0$. 
If $M$ is $\epsilon$-weakly precompact, then
for every sequence $(z_n)_{n\in \N}$ in~$M$ we have
$$
	{\rm clust}_{L_1([0,1],Z)^{**}}(r_n\otimes z_n) \sub \epsilon B_{L_1([0,1],Z)^{**}}. 
$$
\end{lem}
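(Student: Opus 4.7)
The plan is to unpack the norm condition $\|\xi\|\leq \epsilon$ via duality, fixing an arbitrary $\varphi\in B_{L_1([0,1],Z)^*}$ and showing $|\langle\xi,\varphi\rangle|\leq \epsilon$ for each $w^*$-cluster point $\xi$ of $(r_n\otimes z_n)$. Using the representation of $L_1(\mu,X)^*$ recalled just before Notation~\ref{notation:tensor}, $\varphi$ corresponds to a $w^*$-scalarly measurable map $\varphi:[0,1]\to Z^*$ with $\|\|\varphi(\cdot)\|_{Z^*}\|_{L_\infty}\leq 1$, and the pairing reads
$$
a_n := \langle r_n\otimes z_n,\varphi\rangle = \int_0^1 r_n(t)\langle z_n,\varphi(t)\rangle\, dt.
$$

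Since $\langle\xi,\varphi\rangle$ is a cluster value of the bounded real sequence $(a_n)$, I would first extract a subsequence $(n_k)$ along which $a_{n_k}\to\langle\xi,\varphi\rangle$. Within this subsequence I would then use the $\epsilon$-weak precompactness of $M$ to extract a further subsequence $(m_j)$ with $\delta(z_{m_j})\leq \epsilon$; the subsequential limit of $a_{m_j}$ is still $\langle\xi,\varphi\rangle$. Setting $g_j(t):=\langle z_{m_j},\varphi(t)\rangle$, which is a bounded sequence in $L_\infty[0,1]$, the bound $\delta(z_{m_j})\leq\epsilon$ applied to the normalized functional $\varphi(t)/\|\varphi(t)\|_{Z^*}$ together with $\|\varphi(t)\|_{Z^*}\leq 1$ a.e.\ yields
$$
\limsup_{j\to\infty} g_j(t) - \liminf_{j\to\infty} g_j(t) \leq \epsilon\|\varphi(t)\|_{Z^*}\leq \epsilon\quad\text{for a.e. } t\in[0,1].
$$

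Now I am in a position to invoke Lemma~\ref{lem:limsup-liminf}: writing $L:=\liminf_j g_j\in L_\infty[0,1]$, it produces $\limsup_j \int_0^1 |g_j-L|\,dt \leq \epsilon$. Finally I would split
$$
a_{m_j}=\int_0^1 r_{m_j}(g_j-L)\,dt+\int_0^1 r_{m_j} L\,dt,
$$
bound the first summand in absolute value by $\int |g_j-L|\,dt$, and observe that the second summand tends to zero because any subsequence of the Rademacher sequence is $w^*$-null in $L_\infty[0,1]=L_1[0,1]^*$ (a standard consequence of the $L_2$-orthogonality of the Rademachers together with the density of $L_2$ in $L_1$) and $L\in L_1[0,1]$. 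Combining the two bounds gives $|\langle\xi,\varphi\rangle|=\lim_j|a_{m_j}|\leq \epsilon$, hence $\|\xi\|\leq \epsilon$.

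The main obstacle, as I see it, is the bookkeeping around two nested subsequence extractions — the first chosen to realize $\langle\xi,\varphi\rangle$ as an honest limit, the second to activate $\epsilon$-weak precompactness — together with the observation that once $\|\varphi\|\leq 1$ has been used to absorb $\|\varphi(t)\|_{Z^*}$, the pointwise oscillation bound is exactly in the format required by Lemma~\ref{lem:limsup-liminf}. The Rademacher weak-nullity step is routine but must be invoked explicitly to kill the $L$-term.
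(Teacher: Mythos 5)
Your proof is correct and follows essentially the same route as the paper's: the same duality representation of $B_{L_1([0,1],Z)^*}$, the same two-step subsequence extraction (first to realize $\langle\xi,\varphi\rangle$ as a limit, then to get $\delta(z_{m_j})\leq\epsilon$), the same application of Lemma~\ref{lem:limsup-liminf} to the pointwise $\liminf$ function, and the same splitting of the integral with the Rademacher term killed by weak nullity. The only cosmetic differences are your normalization of $\varphi(t)$ (the bound $\|\varphi(t)\|_{Z^*}\leq 1$ already gives the oscillation estimate directly) and phrasing the final step as $w^*$-nullity of $(r_n)$ in $L_\infty[0,1]$ rather than weak nullity in $L_1[0,1]$ paired against $h\in L_\infty[0,1]$, which are equivalent formulations here.
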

\begin{proof} Note that $M$ is bounded and so $(r_n\otimes z_n)_{n\in \N}$ is bounded.
Fix an arbitrary $F\in {\rm clust}_{L_1([0,1],Z)^{**}}(r_{n}\otimes z_{n})$. We claim that
$$
	\big|\langle F, \varphi \rangle \big| \leq \epsilon
	\quad
	\text{for every $\varphi\in B_{L_1([0,1],Z)^{*}}$.}
$$ 
Indeed, take any $\varphi\in B_{L_1([0,1],Z)^{*}}$ (represented as in the paragraph preceding Notation~\ref{notation:tensor}).
Let $(r_{n_k}\otimes z_{n_k})_{k\in \N}$ be a subsequence such that 
$$
	\langle r_{n_k}\otimes z_{n_k},\varphi \rangle \to \langle F,\varphi\rangle
	\quad\text{as $k\to \infty$.}
$$ 
Since $M$ is $\epsilon$-weakly precompact, by passing to a further subsequence
we can assume that $\delta(z_{n_k})\leq \epsilon$. 
Write $h:=\liminf_{k\to \infty} \langle z_{n_k},\varphi(\cdot)\rangle \in L_\infty[0,1]$. Since
$$
	\limsup_{k\to \infty} \langle z_{n_k},\varphi(t)\rangle - \liminf_{k\to \infty} \langle z_{n_k},\varphi(t)\rangle
	\leq \delta(z_{n_k}) \leq \epsilon
	\quad\text{for a.e. $t\in [0,1]$},
$$
we can apply Lemma~\ref{lem:limsup-liminf} to get
\begin{equation}\label{eqn:limsup-integral}
	\limsup_{k\to \infty} \int_0^1 \big|\langle z_{n_k},\varphi(t)\rangle-h(t) \big| \, dt \leq \epsilon.
\end{equation}

For each $k\in \N$ we have
$$
	\langle r_{n_k}\otimes z_{n_k},\varphi \rangle=\int_0^1 r_{n_k}(t)\langle z_{n_k},\varphi(t)\rangle \, dt
$$
and so
\begin{eqnarray*}
	\big| \langle r_{n_k}\otimes z_{n_k},\varphi \rangle \big|
	 & = &
	\left|
	\int_0^1 r_{n_k}(t) h(t) \, dt + \int_0^1 r_{n_k}(t)\big( \langle z_{n_k},\varphi(t) \rangle - h(t) \big) \, dt
	\right|
	\\  & \leq &
	\left|\int_0^1 r_{n_k}(t) h(t) \, dt\right|+
	\int_0^1 \big|\langle z_{n_k},\varphi(t)\rangle-h(t) \big| \, dt.
\end{eqnarray*}
This inequality, \eqref{eqn:limsup-integral} and the fact that $(r_{n_k})_{k\in \N}$ is weakly null in $L_1[0,1]$ yield
$| \langle F, \varphi\rangle|\leq \epsilon$. The proof is finished.
\end{proof}

\begin{lem}\label{lem:simple}
Let $(\Omega,\Sigma,\mu)$ be a probability space, $Y$ be a Banach space, and $\epsilon\geq 0$. Let $(h_n)_{n\in \N}$ be a sequence in $L_\infty(\mu,Y)$ 
such that 
$$
	\sup_{n\in \N}\|h_n\|_{L_\infty(\mu,Y)}<\infty
$$ 
and 
$$
		{\rm clust}_{Y^{**}}(h_n(\omega)) \sub \epsilon B_{Y^{**}} 
		\quad\text{for $\mu$-a.e. $\omega \in \Omega$.}
$$ 
Then 
$$
	{\rm clust}_{L_1(\mu,Y)^{**}}(h_n) \sub \epsilon B_{L_1(\mu,Y)^{**}}.
$$
\end{lem}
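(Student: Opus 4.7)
The plan is to exploit the representation of $L_1(\mu,Y)^*$ recalled just before Notation~\ref{notation:tensor}, which identifies every $\varphi \in L_1(\mu,Y)^*$ with a $w^*$-scalarly $\mu$-measurable function $\varphi:\Omega\to Y^*$ such that $\|\varphi(\cdot)\|_{Y^*}\in L_\infty(\mu)$ and $\|\varphi\|_{L_1(\mu,Y)^*}=\|\|\varphi(\cdot)\|_{Y^*}\|_{L_\infty(\mu)}$. Fixing an arbitrary $F \in {\rm clust}_{L_1(\mu,Y)^{**}}(h_n)$, to obtain $\|F\|_{L_1(\mu,Y)^{**}}\leq \epsilon$ it is enough to show $|\langle F,\varphi\rangle|\leq \epsilon$ for every $\varphi \in B_{L_1(\mu,Y)^*}$, and every such $\varphi$ automatically satisfies $\|\varphi(\omega)\|_{Y^*}\leq 1$ for $\mu$-a.e.\ $\omega\in\Omega$.

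Given such a $\varphi$, first select a subsequence $(h_{n_k})_{k\in\N}$ with $\langle h_{n_k},\varphi\rangle \to \langle F,\varphi\rangle$ and set $g_k(\omega):=\langle h_{n_k}(\omega),\varphi(\omega)\rangle$. Then $(g_k)_{k\in\N}$ is a bounded sequence in $L_\infty(\mu)$, since $|g_k(\omega)|\leq \|h_{n_k}(\omega)\|_Y \|\varphi(\omega)\|_{Y^*}$ and both factors are essentially bounded by hypothesis. Moreover $\int_\Omega g_k\, d\mu = \langle h_{n_k},\varphi\rangle \to \langle F,\varphi\rangle$. The heart of the argument is the pointwise bound
$$
-\epsilon \leq \liminf_{k\to\infty} g_k(\omega) \leq \limsup_{k\to\infty} g_k(\omega) \leq \epsilon \quad \text{for $\mu$-a.e. $\omega\in\Omega$.}
$$

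To verify this, fix $\omega$ in the full-measure set on which ${\rm clust}_{Y^{**}}(h_n(\omega))\sub \epsilon B_{Y^{**}}$ and $\|\varphi(\omega)\|_{Y^*}\leq 1$. Any cluster value $c$ of the bounded scalar sequence $(g_k(\omega))_{k\in\N}$ is attained along some subsequence $g_{k_j}(\omega)\to c$; by $w^*$-compactness of a sufficiently large ball in $Y^{**}$, the bounded sequence $(h_{n_{k_j}}(\omega))_{j\in\N}$ admits a $w^*$-cluster point $y^{**}\in Y^{**}$, which is automatically a $w^*$-cluster point of $(h_n(\omega))_{n\in\N}$. Since $\langle y^{**},\varphi(\omega)\rangle$ is a cluster value of the already convergent sequence $(g_{k_j}(\omega))_{j\in\N}$, it equals $c$; combined with the hypothesis $\|y^{**}\|_{Y^{**}}\leq \epsilon$ this gives $|c|=|\langle y^{**},\varphi(\omega)\rangle|\leq \epsilon\|\varphi(\omega)\|_{Y^*}\leq \epsilon$, as required.

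Finally, I would feed the pointwise bound into Lemma~\ref{lem:Fatou} applied to $(g_k)_{k\in\N}$, obtaining
$$
\limsup_{k\to\infty}\int_\Omega g_k\, d\mu \leq \int_\Omega \limsup_{k\to\infty} g_k\, d\mu \leq \epsilon
$$
and, symmetrically, $\liminf_{k\to\infty}\int_\Omega g_k\, d\mu \geq -\epsilon$. Since $\int_\Omega g_k\, d\mu \to \langle F,\varphi\rangle$, this forces $|\langle F,\varphi\rangle|\leq \epsilon$, completing the proof. The only subtle point is the extraction step identifying each accumulation value of the scalars with the pairing of $\varphi(\omega)$ against a $w^*$-cluster point of the vectors; everything else is a reverse-Fatou computation tailor-made for Lemma~\ref{lem:Fatou}.
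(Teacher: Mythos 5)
Your proof is correct and takes essentially the same route as the paper's: scalarize against $\varphi\in B_{L_1(\mu,Y)^*}$ via the dual representation recalled before Notation~\ref{notation:tensor}, extract the a.e.\ pointwise bound on $\langle h_n(\cdot),\varphi(\cdot)\rangle$ from the hypothesis on ${\rm clust}_{Y^{**}}(h_n(\omega))$, and finish with Lemma~\ref{lem:Fatou}. The only cosmetic differences are that the paper works with $g_n:=|\langle h_n(\cdot),\varphi(\cdot)\rangle|$ and the one-sided estimate $|\langle H,\varphi\rangle|\leq\limsup_{n\to\infty}|\langle h_n,\varphi\rangle|$ rather than your subsequence extraction plus two-sided bound, and it leaves implicit (``by the assumptions'') the $w^*$-compactness argument identifying cluster values of the scalars with pairings against $w^*$-cluster points, which you correctly spell out.
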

\begin{proof} Note that $(h_n)_{n\in \N}$ is bounded in $L_1(\mu,X)$.
Fix $H\in {\rm clust}_{L_1(\mu,Y)^{**}}(h_n)$. Take any $\varphi\in B_{L_1(\mu,Y)^{*}}$
(represented as in the paragraph preceding Notation~\ref{notation:tensor}). 
For each $n\in \N$ we define $g_n\in L_\infty(\mu)$ by $g_n:=|\langle h_n(\cdot),\varphi(\cdot)\rangle|$.
By the assumptions, we have $\sup_{n\in \N}\|g_n\|_{L_\infty(\mu)} <\infty$ and 
$\limsup_{n\to \infty} g_n \leq \epsilon$ $\mu$-a.e. Now, we can apply Lemma~\ref{lem:Fatou} to get
$$
	\limsup_{n\to\infty} \int_\Omega g_n \, d\mu
	\leq
	\int_\Omega \limsup_{n\to\infty} g_n \, d\mu \leq \epsilon,
$$
and so
\begin{eqnarray*}
	\big| \langle H,\varphi \rangle\big|
	 & \leq &
	\limsup_{n\to\infty} \big| \langle h_n,\varphi \rangle\big|=\limsup_{n\to \infty} \left|\int_{\Omega} \langle h_n(\cdot),\varphi(\cdot)\rangle \, d\mu \right| \\
	& \leq &
	 \limsup_{n\to\infty} \int_\Omega g_n \, d\mu \leq \epsilon.
\end{eqnarray*}
This shows that ${\rm clust}_{L_1(\mu,Y)^{**}}(h_n) \sub \epsilon B_{L_1(\mu,Y)^{**}}$.
\end{proof}

The following lemma belongs to the folklore. We include a proof since we did not find any suitable reference for it.

\begin{lem}\label{lem:double}
Let $(\Omega,\Sigma,\mu)$ be a probability space, $X$ be a Banach space, $f:\Omega \to X$
be a strongly $\mu$-measurable function and $g\in L_1[0,1]$. Define $h_{g,f}: \Omega \to L_1([0,1],X)$ by $h_{g,f}(\omega):=g \otimes f(\omega)$
for all $\omega \in \Omega$. Then: 
\begin{enumerate}
\item[(i)] $h_{g,f}$ is strongly $\mu$-measurable; 
\item[(ii)] $h_{g,f}$ is $\mu$-essentially bounded whenever $f$ is $\mu$-essentially bounded.
\end{enumerate}
\end{lem}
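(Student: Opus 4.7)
The plan hinges on the identity
$$\|h_{g,f}(\omega)\|_{L_1([0,1],X)}=\|g\|_{L_1[0,1]}\,\|f(\omega)\|_X \quad \text{for every } \omega\in\Omega,$$
which is the pointwise version of the equality $\|g\otimes x\|_{L_1([0,1],X)}=\|g\|_{L_1[0,1]}\|x\|_X$ noted in Notation~\ref{notation:tensor}. Given this identity, part~(ii) is immediate: if $\|f(\cdot)\|_X$ is $\mu$-essentially bounded, the identity transfers the bound to $\|h_{g,f}(\cdot)\|_{L_1([0,1],X)}$ (with the factor $\|g\|_{L_1[0,1]}$). The whole content therefore lies in~(i).

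For~(i), I would argue by simple-function approximation. Since $f$ is strongly $\mu$-measurable, fix a sequence of simple functions $f_n=\sum_{k=1}^{m_n}x_{n,k}\chi_{A_{n,k}}:\Omega\to X$ (with the $A_{n,k}\in\Sigma$ disjoint and $x_{n,k}\in X$) such that $\|f_n(\omega)-f(\omega)\|_X\to 0$ for $\mu$-a.e.\ $\omega\in\Omega$. Each function
$$h_{g,f_n}(\omega)=\sum_{k=1}^{m_n}(g\otimes x_{n,k})\,\chi_{A_{n,k}}(\omega)$$
is a simple $L_1([0,1],X)$-valued function on~$\Omega$, hence strongly $\mu$-measurable. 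Applying the norm identity with $f-f_n$ in place of~$f$ yields
$$\|h_{g,f_n}(\omega)-h_{g,f}(\omega)\|_{L_1([0,1],X)}=\|g\|_{L_1[0,1]}\|f_n(\omega)-f(\omega)\|_X\to 0$$
for $\mu$-a.e.\ $\omega\in\Omega$. Thus $h_{g,f}$ is a $\mu$-a.e.\ pointwise norm limit of strongly $\mu$-measurable functions and is therefore itself strongly $\mu$-measurable.

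No serious obstacle arises: the argument just exploits the fact that $x\mapsto g\otimes x$ is a bounded linear operator $X\to L_1([0,1],X)$ of norm $\|g\|_{L_1[0,1]}$, so strong $\mu$-measurability passes from~$f$ to $h_{g,f}$ by composition and approximation. A Pettis-measurability approach would also work (essential separable-valuedness transfers through any bounded operator, and $w^*$-scalar measurability can be read off from the representation of $L_1([0,1],X)^*$ recalled before Notation~\ref{notation:tensor}), but it seems less direct than the simple-function reduction above.
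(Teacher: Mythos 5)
Your proposal is correct and follows essentially the same route as the paper: both reduce part~(i) to simple-function approximation via the pointwise identity $\|h_{g,f_n}(\omega)-h_{g,f}(\omega)\|_{L_1([0,1],X)}=\|g\|_{L_1[0,1]}\|f_n(\omega)-f(\omega)\|_X$, and both derive part~(ii) immediately from the norm identity $\|h_{g,f}(\omega)\|_{L_1([0,1],X)}=\|g\|_{L_1[0,1]}\|f(\omega)\|_X$ (together, implicitly or explicitly, with the measurability established in~(i)). Your closing remark that the map $x\mapsto g\otimes x$ is a bounded operator of norm $\|g\|_{L_1[0,1]}$ is a clean way to package the same argument, but it adds nothing beyond what the paper's proof already does.
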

\begin{proof} (i) Clearly, $h_{g,f}$ is a simple function whenever $f$ is. In the general case, if $f_n:\Omega \to X$
is a sequence of simple functions converging to~$f$ $\mu$-a.e., then 
$(h_{g,f_n})_{n\in \N}$ is a sequence of simple functions converging to~$h_{g,f}$ $\mu$-a.e., because
\begin{multline*}
	\|h_{g,f_n}(\omega)-h_{g,f}(\omega)\|_{L_1([0,1],X)} \\ =
	\|g\otimes(f_n(\omega)-f(\omega))\|_{L_1([0,1],X)}=
	\|g\|_{L_1[0,1]} \|f_n(\omega)-f(\omega)\|_X 
\end{multline*}
for every $\omega\in \Omega$ and for every $n\in \N$. Thus, $h_{g,f}$ is strongly $\mu$-measurable.

(ii) This is immediate from~(i) and the equality 
$$
	\|h_{g,f}(\omega)\|_{L_1([0,1],X)}=\|g\|_{L_1[0,1]}\|f(\omega)\|_{X}
$$
which holds for every $\omega \in \Omega$.
\end{proof}

We also isolate for easy reference the following standard fact, which follows
from Chebyshev's inequality. 

\begin{lem}\label{lem:split}
Let $(\Omega,\Sigma,\mu)$ be a probability space and $X$ be a Banach space. If $W\sub L_1(\mu,X)$ is uniformly integrable,
then for every $\eta>0$ there is $\rho>0$ such that 
$$
	W \sub L(\rho B_X) + \eta B_{L_1(\mu,X)}.
$$ 
More precisely, for every $f\in W$ there is $A\in \Sigma$ such that $f\chi_A\in L(\rho B_X)$ and $\|f\chi_{\Omega \setminus A}\|_{L_1(\mu,X)}\leq \eta$.
\end{lem}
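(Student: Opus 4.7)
The plan is to choose, for each $f \in W$, the natural set $A = A(f) := \{\omega \in \Omega : \|f(\omega)\|_X \leq \rho\}$, where $\rho$ is to be selected in a way that works uniformly over~$W$. With this choice, the inclusion $f\chi_A \in L(\rho B_X)$ is automatic, so the only real task is to arrange that $\int_{\Omega \setminus A}\|f(\cdot)\|_X\, d\mu \leq \eta$ for every $f\in W$.

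First I would invoke uniform integrability of~$W$ to obtain $\delta>0$ such that $\sup_{f\in W}\int_B \|f(\cdot)\|_X\, d\mu \leq \eta$ whenever $\mu(B)\leq \delta$. Next, since uniformly integrable sets are bounded in~$L_1(\mu,X)$, let $M:=\sup_{f\in W}\|f\|_{L_1(\mu,X)}<\infty$. Chebyshev's inequality then gives
$$
	\mu\bigl(\{\omega : \|f(\omega)\|_X > \rho\}\bigr) \leq \frac{1}{\rho}\int_\Omega \|f(\cdot)\|_X\, d\mu \leq \frac{M}{\rho}
$$
for every $f\in W$ and every $\rho>0$.

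To conclude, I would choose any $\rho \geq M/\delta$. Then for every $f\in W$, the set $A:=\{\omega : \|f(\omega)\|_X \leq \rho\}$ satisfies $\mu(\Omega \setminus A)\leq M/\rho \leq \delta$, so by the choice of~$\delta$,
$$
	\|f\chi_{\Omega \setminus A}\|_{L_1(\mu,X)} = \int_{\Omega \setminus A} \|f(\cdot)\|_X\, d\mu \leq \eta,
$$
while $f\chi_A \in L(\rho B_X)$ by construction. This yields both the refined pointwise statement and the inclusion $W \subseteq L(\rho B_X) + \eta B_{L_1(\mu,X)}$. There is no real obstacle here; the only thing to be careful about is to extract $\delta$ and the bound~$M$ from $W$ \emph{before} fixing~$\rho$, so that a single $\rho$ works for the entire family.
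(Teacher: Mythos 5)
Your proof is correct and is exactly the standard Chebyshev argument the paper has in mind: the paper omits the proof of Lemma~\ref{lem:split}, noting only that it ``follows from Chebyshev's inequality'', and your truncation $A=\{\omega:\|f(\omega)\|_X\leq\rho\}$ with $\rho\geq M/\delta$ is the canonical way to carry that out. Your closing caution---fixing $\delta$ and $M$ from $W$ before choosing a single $\rho$---is precisely the point that makes the statement uniform over~$W$.
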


We have already gathered all the tools needed to prove Theorem~\ref{theo:Bourgain}.

\begin{proof}[Proof of Theorem~\ref{theo:Bourgain}] 
(ii) follows from (i), Theorem~\ref{theo:Behrends-KPS}(i) and Lemma~\ref{lem:epsilon-compact+}.

We begin the proof of (i) with the following:

{\em Particular case. Suppose that $f_n\in L_\infty(\mu,X)$ for all $n\in \N$ and that}
$$
	\sup_{n\in \N} \|f_n\|_{L_\infty(\mu,X)}<\infty.
$$
Write $Y:=L_1([0,1],X)$. For each $n\in \N$ we define (thanks to Lemma~\ref{lem:double})
$h_n \in L_\infty(\mu,Y)$ by 
$$
	h_n(\omega):=r_n\otimes f_n(\omega) 
	\quad\text{for $\mu$-a.e. $\omega\in \Omega$.}
$$
By the assumption on~$F$ and Lemma~\ref{lem:BH}, we have  
$$
	{\rm clust}_{Y^{**}}(h_n(\omega)) \sub \epsilon B_{Y^{**}}
	\quad\text{for $\mu$-a.e. $\omega\in \Omega$.} 
$$
Bearing in mind that
$\sup_{n\in \N}\|h_n\|_{L_\infty(\mu,Y)}<\infty$,
an appeal to Lemma~\ref{lem:simple} ensures that
$$
	{\rm clust}_{L_1(\mu,Y)^{**}}(h_n) \sub \epsilon B_{L_1(\mu,Y)^{**}}.
$$

Write $Z:=L_1(\mu,X)$ and let $\Phi: L_1(\mu,Y) \to L_1([0,1],Z)$ be the natural isometric isomorphism. Then
$\Phi(h_n)=r_n\otimes f_n$ for all $n\in \N$ and therefore
$$
	{\rm clust}_{L_1([0,1],Z)^{**}}(r_n\otimes f_n) \sub \epsilon B_{L_1([0,1],Z)^{**}}.
$$
Therefore, $(f_n)_{n\in\N}$ cannot be an $\ell_1$-sequence with constant $C>\epsilon$
(by Lemma~\ref{lem:BH-0}), as required.

We now turn to the:

{\em General case.} Fix $C>\epsilon$ and choose any $0<\eta < C- \epsilon$. Since $(f_n)_{n\in \N}$ is uniformly integrable, 
Lemma~\ref{lem:split} gives a sequence $(A_n)_{n\in \N}$ in~$\Sigma$ such that each $g_n:=f_n\chi_{A_n}$
belongs to $L_\infty(\mu,X)$ and:
\begin{itemize}
\item $\sup_{n\in \N}\|g_n\|_{L_\infty(\mu,X)}<\infty$,
\item $\|f_n-g_n\|_{L_1(\mu,X)} \leq \eta$ for every $n\in \N$. 
\end{itemize}
Define a multi-function $\tilde{F}:\Omega \to \mathcal{P}(X)$ by $\tilde{F}(\omega):=F(\omega)\cup\{0\}$
for every $\omega \in \Omega$. By the {\em Particular case}, 
the sequence $(g_n)_{n\in \N}$ in $S_{1}(\tilde{F})$ cannot be an $\ell_1$-sequence 
with constant~$C-\eta$. Since $\|f_n-g_n\|_{L_1(\mu,X)}\leq \eta$
for all $n\in \N$, we conclude that $(f_n)_{n\in \N}$ cannot be an $\ell_1$-sequence with constant~$C$.
The proof is finished.
\end{proof}

\begin{question}\label{question:Bourgain}
Is constant $2$ optimal in Theorem~\ref{theo:Bourgain}?
\end{question}

The concept of $\delta\mathcal{C}$-sets in Lebesgue-Bochner spaces (which corresponds to the case $\epsilon=0$ of the following definition)
appeared first in~\cite{bou-JJ} and was discussed further in~\cite{bat-hie}.

\begin{defi}\label{defi:deltaC} 
Let $(\Omega,\Sigma,\mu)$ be a probability space, $X$ be a Banach space, $W \sub L_1(\mu,X)$, and $\epsilon\geq 0$. 
We say that $W$ is a {\em $\delta\mathcal{C}_\epsilon$-set} if it is uniformly integrable and 
for each $\delta>0$ there is an $\epsilon$-weakly precompact set $M \sub X$ such that
for every $f\in W$ there is $A\in \Sigma$ (depending on~$f$) with $\mu(A) \geq 1-\delta$ 
such that $f(\omega) \in M$ for $\mu$-a.e. $\omega \in A$. If in addition $\epsilon=0$, then we just say that $W$ is a {\em $\delta\mathcal{C}$-set}.
\end{defi}

Every $\delta\mathcal{C}$-set is weakly precompact, but the converse does not hold in general, see~\cite{bou-JJ}
and \cite[Example~3]{bat-hie}. It is known that uniform integrability and being a $\delta\mathcal{C}$-set are equivalent properties if $X$ does not contain
subspaces isomorphic to~$\ell_1$ (see \cite[Corollary~9 and Theorem~14]{bou-JJ}).

The last result of this section is an extension of Corollary~\ref{cor:Bourgain}. To deal with it we need a lemma 
that will also be used later. We omit its straightforward proof.

\begin{lem}\label{lem:sum}
Let $X$ be a Banach space.
\begin{enumerate}
\item[(i)] If $(x_n)_{n\in \N}$ and $(y_n)_{n\in \N}$ are bounded sequences in~$X$, then 
$$
	\delta(x_n+y_n)\leq \delta(x_n)+\delta(y_n).
$$
\item[(ii)] If $\epsilon_1,\epsilon_2\geq 0$ and $M_i \sub X$ is $\epsilon_i$-weakly precompact for each $i\in \{1,2\}$, then $M_1+M_2$ is $(\epsilon_1+\epsilon_2)$-weakly precompact.
\item[(iii)] If $\epsilon\geq 0$ and $M\sub X$ is $\epsilon$-weakly precompact, then $\overline{M}$ is $\epsilon$-weakly precompact. 
\end{enumerate}
\end{lem}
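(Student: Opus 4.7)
The plan is to handle the three parts in order, using the $w^*$-oscillation characterisation of $\delta$ given in the introduction, namely
\[
\delta(x_n)=\sup_{x^*\in B_{X^*}}\Bigl(\limsup_{n\to\infty} x^*(x_n)-\liminf_{n\to\infty} x^*(x_n)\Bigr).
\]

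For (i), I would fix $x^*\in B_{X^*}$ and use the elementary real-variable fact that, for bounded real sequences $(a_n),(b_n)$,
\[
\limsup_{n\to\infty}(a_n+b_n)-\liminf_{n\to\infty}(a_n+b_n)\;\le\;\bigl(\limsup a_n-\liminf a_n\bigr)+\bigl(\limsup b_n-\liminf b_n\bigr),
\]
applied to $a_n=x^*(x_n)$ and $b_n=x^*(y_n)$. Each summand on the right is bounded by $\delta(x_n)$ and $\delta(y_n)$ respectively, and taking the supremum over $x^*\in B_{X^*}$ yields the desired inequality.

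For (ii), given any sequence $(z_n)$ in $M_1+M_2$, decompose $z_n=x_n+y_n$ with $x_n\in M_1$ and $y_n\in M_2$. Use $\epsilon_1$-weak precompactness of $M_1$ to extract a subsequence along which $\delta(x_{n_k})\le\epsilon_1$, then use $\epsilon_2$-weak precompactness of $M_2$ to extract a further subsequence along which the $y$-part has $\delta\le\epsilon_2$. Apply (i) to the resulting sub-subsequence.

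For (iii), pick a sequence $(x_n)$ in $\overline{M}$ and, for each $n$, choose $y_n\in M$ with $\|x_n-y_n\|\le 1/n$. Extract a subsequence $(y_{n_k})$ with $\delta(y_{n_k})\le\epsilon$ (using that $M$ is $\epsilon$-weakly precompact). Writing $w_k:=x_{n_k}-y_{n_k}$, we have $\|w_k\|\to 0$, hence $\delta(w_k)=0$; applying (i) to $x_{n_k}=y_{n_k}+w_k$ gives $\delta(x_{n_k})\le\epsilon$. There is no real obstacle here — the only thing to be mildly careful about is that the elementary inequality in (i) is stated for sums rather than differences, but since $\delta$ is unchanged when the sign of $x^*$ is flipped, the same argument applies if needed.
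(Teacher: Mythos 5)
Your proof is correct; the paper itself omits the proof of this lemma as ``straightforward,'' and your argument --- the oscillation formula for $\delta$, subadditivity of oscillations of bounded real sequences, successive subsequence extraction, and a norm-null perturbation --- is exactly the intended one. The only step you use implicitly is that $\delta$ cannot increase when passing to a subsequence (the set of $w^*$-cluster points in $X^{**}$ shrinks), which you need in~(ii) so that $\delta(x_{n_{k_j}})\leq\epsilon_1$ survives the second extraction, and again in~(iii); this is immediate, so there is no gap.
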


\begin{cor}\label{cor:deltaC}
Let $(\Omega,\Sigma,\mu)$ be a probability space, $X$ be a Banach space, and $\epsilon\geq 0$. Then every $\delta\mathcal{C}_\epsilon$-set
of $L_1(\mu,X)$ is $2\epsilon$-weakly precompact. 
\end{cor}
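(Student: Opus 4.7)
By Lemma~\ref{lem:epsilon-compact+}, it suffices to show that an arbitrary $\delta\mathcal{C}_\epsilon$-set $W\subseteq L_1(\mu,X)$ is $\epsilon'$-weakly precompact for every $\epsilon'>2\epsilon$. Fix such an~$\epsilon'$ and choose $0<\eta<\frac{\epsilon'-2\epsilon}{2}$. The plan is to decompose each member of~$W$ as a sum of a function with values in a single $\epsilon$-weakly precompact subset of~$X$ plus a small $L_1$-tail, and then combine Theorem~\ref{theo:Bourgain}(ii) with Lemma~\ref{lem:sum}(i).

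First, by uniform integrability of~$W$, select $\delta>0$ such that $\int_A\|f(\cdot)\|_X\,d\mu\leq \eta$ for every $f\in W$ and every $A\in\Sigma$ with $\mu(A)\leq \delta$. By the defining property of a $\delta\mathcal{C}_\epsilon$-set, this $\delta$ provides an $\epsilon$-weakly precompact set $M\subseteq X$ such that for every $f\in W$ there is $A_f\in\Sigma$ with $\mu(\Omega\setminus A_f)\leq\delta$ and $f(\omega)\in M$ for $\mu$-a.e.\ $\omega\in A_f$. For each $f\in W$ set
$$
  g_f:=f\chi_{A_f}, \qquad h_f:=f\chi_{\Omega\setminus A_f},
$$
so that $f=g_f+h_f$, with $g_f(\omega)\in M\cup\{0\}$ for $\mu$-a.e.\ $\omega\in\Omega$ and $\|h_f\|_{L_1(\mu,X)}\leq \eta$.

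Consider the constant multi-function $\tilde F(\omega):=M\cup\{0\}$ on~$\Omega$. The set $M\cup\{0\}$ is $\epsilon$-weakly precompact (any sequence in it either has a constant subsequence equal to~$0$ or a subsequence contained in~$M$, to which the hypothesis applies), so Theorem~\ref{theo:Bourgain}(ii) implies that the set $S_1(\tilde F)$, being equal to the uniformly integrable set $L(M\cup\{0\})$, is $2\epsilon$-weakly precompact in $L_1(\mu,X)$. Hence, given any sequence $(f_n)_{n\in\N}$ in~$W$, we may pass to a subsequence so that $\delta(g_{f_n})\leq 2\epsilon$.

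Now observe that $\|h_{f_n}\|_{L_1(\mu,X)}\leq \eta$ forces $\delta(h_{f_n})\leq 2\eta$ directly from the representation of $\delta(\cdot)$ via dual functionals recalled in the introduction. Applying Lemma~\ref{lem:sum}(i) to the decomposition $f_n=g_{f_n}+h_{f_n}$ yields
$$
  \delta(f_n)\leq \delta(g_{f_n})+\delta(h_{f_n})\leq 2\epsilon+2\eta<\epsilon',
$$
which is exactly what was needed. No step here looks delicate; the only point requiring a little care is ensuring that $M\cup\{0\}$ (not just~$M$) remains $\epsilon$-weakly precompact, so that Theorem~\ref{theo:Bourgain} can be applied to the sequence of truncations~$g_{f_n}$.
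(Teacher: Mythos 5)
Your proof is correct and takes essentially the same route as the paper's: both use the $\delta\mathcal{C}_\epsilon$ definition together with uniform integrability to split each $f\in W$ into a truncation lying in $L(M\cup\{0\})$ plus an $\eta$-small $L_1$-remainder, invoke Corollary~\ref{cor:Bourgain} (the constant multi-function case of Theorem~\ref{theo:Bourgain}) to get that $L(M\cup\{0\})$ is $2\epsilon$-weakly precompact, and let $\eta\to 0$ via Lemma~\ref{lem:epsilon-compact+}. The only cosmetic difference is that you run the estimate at the level of sequences using Lemma~\ref{lem:sum}(i) and $\delta(h_{f_n})\leq 2\eta$, while the paper phrases it as the set inclusion $W\subseteq L(M\cup\{0\})+\eta B_{L_1(\mu,X)}$ combined with Lemma~\ref{lem:sum}(ii).
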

\begin{proof} Let $W \sub L_1(\mu,X)$ be a $\delta\mathcal{C}_\epsilon$-set. Fix $\eta>0$. Choose $\delta>0$ such that 
$$
	\sup_{f\in W}\int_C \|f(\cdot)\|_X \, d\mu\leq \eta
	\quad
	\text{for every $C\in \Sigma$ with $\mu(C)\leq \delta$.}
$$  
Let $M \sub X$ be an $\epsilon$-weakly precompact set such that, 
for each $f\in W$, there is $A_f\in \Sigma$ such that $\mu(A_f)\geq 1-\delta$ and $f(\omega)\in M$
for $\mu$-a.e. $\omega \in A_f$. Then for each $f\in W$ we have $f\chi_{A_{f}}\in L(M \cup\{0\})$ and $\|f\chi_{\Omega \setminus A_{f}}\|_{L_1(\mu,X)}\leq \eta$, hence 
\begin{equation}\label{eqn:UI}
	W \sub L(M \cup\{0\})+\eta B_{L_1(\mu,X)}. 
\end{equation}
Since $M \cup\{0\}$ is $\epsilon$-weakly precompact, Corollary~\ref{cor:Bourgain} ensures that the set $L(M\cup\{0\})$
is $2\epsilon$-weakly precompact. By~\eqref{eqn:UI} and Lemma~\ref{lem:sum}(ii), $W$~is $(2\epsilon+2\eta)$-weakly precompact.
As $\eta>0$ is arbitrary, $W$ is $2\epsilon$-weakly precompact (Lemma~\ref{lem:epsilon-compact+}).
\end{proof}

\section{Banach spaces having property~$\mathfrak{KM}_w$}\label{section:propertyM}

We begin this section by collecting some basic properties of Banach spaces having property~$\mathfrak{KM}_w$ (Definition~\ref{defi:KMw}).
The first result says somehow that this property can be handled by considering absolutely convex closed subsets of the unit ball.

\begin{pro}\label{pro:characterization}
Let $X$ be a Banach space. The following statements are equivalent:
\begin{enumerate}
\item[(i)] $X$ has property~$\mathfrak{KM}_w$, i.e., there is a family $\{M_{n,p}:n,p\in \Nat\}$ of subsets of~$X$ such that
\begin{enumerate}
\item $M_{n,p}$ is $\frac{1}{p}$-weakly precompact for all $n,p\in \N$;
\item for each weakly precompact set $C \sub X$ and for each $p\in \N$ there is $n\in \N$ such that $C \sub M_{n,p}$.
\end{enumerate}
\item[(ii)] The same as~(i) with each $M_{n,p}$ being absolutely convex and closed.
\item[(iii)] There is a family $\{M_{n,p}:n,p\in \Nat\}$ of subsets of~$B_X$ such that
\begin{enumerate}
\item $M_{n,p}$ is $\frac{1}{p}$-weakly precompact for all $n,p\in \N$;
\item for each weakly precompact set $C \sub B_X$ and for each $p\in \N$ there is $n\in \N$ such that $C \sub M_{n,p}$.
\end{enumerate}
\item[(iv)] The same as (iii) with each $M_{n,p}$ being absolutely convex and closed.
\end{enumerate}
Moreover, any of these families can be chosen such that $M_{n,p} \sub M_{n+1,p}$ for all $n,p\in \N$.
\end{pro}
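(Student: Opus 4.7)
The plan is to establish the equivalences through a short chain of elementary constructions, using the two quantitative tools already in place: the scaling lemma (Lemma~\ref{lem:operator-compact}) to move between the unit ball and the whole space, and the absolutely convex hull theorem (Theorem~\ref{theo:Krein}) combined with Lemma~\ref{lem:sum}(iii) to move between arbitrary sets and absolutely convex closed ones. The implications (ii)$\Rightarrow$(i) and (iv)$\Rightarrow$(iii) are trivial. The implications (i)$\Rightarrow$(iii) and (ii)$\Rightarrow$(iv) follow by replacing $M_{n,p}$ with $M_{n,p}\cap B_X$: a subset of a $\frac{1}{p}$-weakly precompact set is $\frac{1}{p}$-weakly precompact directly from the definition, intersection preserves absolute convexity and closedness, and any weakly precompact $C\subseteq B_X$ originally contained in $M_{n,p}$ is also contained in the intersection.

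For (iii)$\Rightarrow$(i) I would use scaling: given the family from (iii), fix a bijection $\sigma:\N\times\N\to\N$ and, indexed by the new pair $(\sigma(k,n),p)$, consider the set $k\cdot M_{n,kp}$. By Lemma~\ref{lem:operator-compact} applied to the homothety $x\mapsto kx$, this set is $\frac{1}{p}$-weakly precompact. Any weakly precompact $C\subseteq X$ is bounded, so $C\subseteq k B_X$ for some $k\in\N$; then $k^{-1}C\subseteq B_X$ is weakly precompact and hence lies in some $M_{n,kp}$, yielding $C\subseteq k\cdot M_{n,kp}$. The same scaling construction handles (iv)$\Rightarrow$(ii), with absolute convexity and closedness being preserved by the homothety.

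For (i)$\Rightarrow$(ii) I would set $N_{n,p}:=\overline{\mathrm{aco}}(M_{n,2p})$. Since $M_{n,2p}$ is $\frac{1}{2p}$-weakly precompact, Theorem~\ref{theo:Krein} gives that $\mathrm{aco}(M_{n,2p})$ is $\frac{1}{p}$-weakly precompact, and Lemma~\ref{lem:sum}(iii) transfers this to the closure. Since $M_{n,2p}\subseteq N_{n,p}$, the covering property passes through automatically. The implication (iii)$\Rightarrow$(iv) is identical, with the extra observation that $N_{n,p}\subseteq\overline{\mathrm{aco}}(B_X)=B_X$ since $B_X$ is absolutely convex and closed.

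For the moreover, given any of the four types of families I would redefine $\tilde M_{n,p}$ monotonically in $n$. For versions (i) and (iii), set $\tilde M_{n,p}:=\bigcup_{k\leq n}M_{k,p}$: a finite union of $\frac{1}{p}$-weakly precompact sets is $\frac{1}{p}$-weakly precompact, since any sequence has an infinite subsequence lying entirely in one of the pieces, and this construction respects being a subset of $B_X$. For versions (ii) and (iv), where absolute convexity and closedness must be preserved, I would instead set $\tilde M_{n,p}:=\overline{\mathrm{aco}}\bigl(\bigcup_{k\leq n}M_{k,2p}\bigr)$ starting from the doubled-index version guaranteed by (ii) or (iv); Theorem~\ref{theo:Krein} together with Lemma~\ref{lem:sum}(iii) again yields $\frac{1}{p}$-weak precompactness, and the containment in $B_X$ persists in case (iv). No step looks like a serious obstacle; the one thing to keep track of is the systematic doubling of the precompactness index whenever Theorem~\ref{theo:Krein} is invoked.
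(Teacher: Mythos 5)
Your proposal is correct and matches the paper's proof in all essentials: the same trivial implications, intersection with $B_X$, a scaling construction via a bijection of $\N\times\N$ with $\N$ (the paper's $\tilde{M}_{n,p}:=\phi_2(n)M_{\phi_1(n),p\phi_2(n)}$ is exactly your $k\cdot M_{n,kp}$), the closed absolutely convex hull with the index doubled to absorb the factor~$2$ from Theorem~\ref{theo:Krein} together with Lemma~\ref{lem:sum}(iii), and finite unions for the monotonicity claim. The only cosmetic difference is that you route some implications directly (e.g.\ (ii)$\Rightarrow$(iv) and (iv)$\Rightarrow$(ii)) where the paper chains through (i) and (iii), which changes nothing of substance.
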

\begin{proof} The implications (ii)$\impli$(i) and (iv)$\impli$(iii) are obvious.

(i)$\impli$(ii) and (iii)$\impli$(iv): It is easy to check that, for a given $\epsilon\geq 0$, the union of finitely many 
$\epsilon$-weakly precompact subsets of a Banach space is $\epsilon$-weakly precompact. 
So, if $\{M_{n,p}:n,p\in \N\}$ is a family as in condition~(i) (resp.,~(iii)) and we define $\tilde{M}_{n,p}:=\bigcup_{i=1}^n M_{i,p}$ for all $n,p\in \N$, 
then the family $\{\tilde{M}_{n,p}:n,p\in \N\}$ also satisfies the requirements of condition~(i)
(resp.,~(iii)) and we have $\tilde{M}_{n,p}\sub \tilde{M}_{n+1,p}$ for all $n,p\in \N$.
Each ${\rm aco}(\tilde{M}_{n,p})$ is $\frac{2}{p}$-weakly precompact
(by Theorem~\ref{theo:Krein}) and so the same holds for $\overline{{\rm aco}}(\tilde{M}_{n,p})$ 
(Lemma~\ref{lem:sum}(iii)). Thus, the family $\{\overline{{\rm aco}}(\tilde{M}_{n,2p}):n,p\in \N\}$ satisfies the requirements.

(i)$\impli$(iii): Note that if $\{M_{n,p}:n,p\in \N\}$ is a family as in condition~(i), then 
$\{M_{n,p}\cap B_X:n,p\in \N\}$ satisfies the requirements of~(iii).

(iii)$\impli$(i): Let $\phi:\N \to \N \times \N$ be a bijection. Write $\phi(n)=(\phi_1(n),\phi_2(n))$ for all $n\in \N$.
Define $\tilde{M}_{n,p}:=\phi_2(n) M_{\phi_1(n),p \phi_2(n)}$ for all $n,p\in \N$. 
It is easy to check that the family $\{\tilde{M}_{n,p}:n,p\in \N\}$ satisfies the required properties.
\end{proof}
 
\begin{pro}\label{pro:M-subspaces}
Let $X$ be a Banach space having property~$\mathfrak{KM}_w$. Then every subspace of~$X$ has property~$\mathfrak{KM}_w$. 
\end{pro}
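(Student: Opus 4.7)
The plan is to use the obvious restriction: given a family $\{M_{n,p}:n,p\in \N\}$ witnessing property~$\mathfrak{KM}_w$ for $X$, show that the family $\{M_{n,p}\cap Y:n,p\in \N\}$ witnesses property~$\mathfrak{KM}_w$ for the subspace~$Y$. To justify this I need two ``absoluteness'' observations, both ultimately derived from the Hahn--Banach theorem: that being $\epsilon$-weakly precompact does not depend on the ambient Banach space, and neither does being weakly precompact.

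For the first observation, I would fix a bounded sequence $(x_n)_{n\in\N}$ contained in~$Y$ and compare the quantity $\delta(x_n)$ computed inside~$Y$ with the one computed inside~$X$. By Hahn--Banach the restriction map $B_{X^*}\to B_{Y^*}$ is surjective, and on elements of~$Y$ the extension $x^*\in B_{X^*}$ and its restriction $y^*=x^*|_Y\in B_{Y^*}$ give identical scalar sequences. Using the formula
\[
\delta(x_n)=\sup_{x^*\in B_{X^*}}\Bigl(\limsup_{n\to\infty}x^*(x_n)-\liminf_{n\to\infty}x^*(x_n)\Bigr),
\]
both suprema coincide, so the two values of $\delta$ agree. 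Consequently, if $M_{n,p}\sub X$ is $\tfrac{1}{p}$-weakly precompact then so is $M_{n,p}\cap Y$ as a subset of~$Y$, establishing condition~(i) of Definition~\ref{defi:KMw} for the restricted family.

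For the second observation, a sequence $(x_n)_{n\in \N}$ in~$Y$ is weakly Cauchy in~$Y$ if and only if it is weakly Cauchy in~$X$: restriction of functionals gives one implication, and Hahn--Banach extension gives the other. Hence a set $C\sub Y$ is weakly precompact when viewed inside~$Y$ if and only if it is weakly precompact as a subset of~$X$. Given such a~$C$ and any $p\in \N$, property~$\mathfrak{KM}_w$ applied in~$X$ furnishes some $n\in \N$ with $C\sub M_{n,p}$, and since $C\sub Y$ already, we get $C\sub M_{n,p}\cap Y$, confirming condition~(ii).

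There is no real obstacle here; the proof is essentially a bookkeeping exercise, and the only thing to be careful about is to spell out clearly that the two relevant notions behave functorially under passage to a closed subspace. No appeal to Theorem~\ref{theo:Krein} or any of the deeper results of Section~\ref{section:epsilon-weaklyprecompact} is needed.
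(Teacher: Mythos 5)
Your proof is correct and takes essentially the same route as the paper, whose entire argument is that $\{M_{n,p}\cap Y:n,p\in \N\}$ witnesses property~$\mathfrak{KM}_w$ for the subspace~$Y$. The Hahn--Banach absoluteness observations you spell out (that $\delta(x_n)$ and weak Cauchyness are the same computed in $Y$ or in~$X$) are exactly the facts the paper leaves implicit.
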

\begin{proof} 
Let $\{M_{n,p}:n,p\in \N\}$ be a family of subsets of~$X$ as in Definition~\ref{defi:KMw}.
Then, for any subspace $Y\sub X$, the family $\{M_{n,p}\cap Y:n,p\in \N\}$ witnesses that $Y$
has property~$\mathfrak{KM}_w$. 
\end{proof}

We next show that any SWPG Banach space has property~$\mathfrak{KM}_w$. This includes
all SWCG spaces (e.g., $L_1(\mu)$ for a finite measure~$\mu$, separable Schur spaces and, of course, reflexive spaces)
but also non-SWCG spaces like~$c_0$ (note that every Banach space without isomorphic copies of~$\ell_1$ is SWPG). 
As we will see later, any Banach space having property~$\mathfrak{KM}$ (Definition~\ref{defi:KM}) also 
satisfies property~$\mathfrak{KM}_w$ (Theorem~\ref{theo:wsc}). 

\begin{pro}\label{pro:SWPGimpliesKMw}
Let $X$ be a Banach space. If $X$ is SWPG, then it has property~$\mathfrak{KM}_w$.  
\end{pro}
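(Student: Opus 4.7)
The plan is to use the set $C_0$ witnessing SWPG directly to manufacture the family $\{M_{n,p}\}$, simply absorbing the ``$\epsilon B_X$'' error allowed by the SWPG definition into the $\frac{1}{p}$-tolerance allowed by property~$\mathfrak{KM}_w$. Let $C_0\sub X$ be a weakly precompact set as in the definition of SWPG, and define
$$
	M_{n,p}:=nC_0+\frac{1}{2p}B_X \quad \text{for all } n,p\in\N.
$$
I will verify the two conditions of Definition~\ref{defi:KMw} for this family.

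For condition~(i), note that $nC_0$ is weakly precompact (hence $0$-weakly precompact) for every $n\in\N$, since weak precompactness is preserved under scaling. On the other hand, $\frac{1}{2p}B_X$ is trivially $\frac{1}{p}$-weakly precompact: for any sequence $(x_k)_{k\in\N}$ in $\frac{1}{2p}B_X$ and any $x^*\in B_{X^*}$, we have $|x^*(x_k)|\leq \frac{1}{2p}$, so $\limsup_k x^*(x_k)-\liminf_k x^*(x_k)\leq \frac{1}{p}$. Applying Lemma~\ref{lem:sum}(ii) to the Minkowski sum $M_{n,p}=nC_0+\frac{1}{2p}B_X$ (with $\epsilon_1=0$ and $\epsilon_2=\frac{1}{p}$) then yields that $M_{n,p}$ is $\frac{1}{p}$-weakly precompact.

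For condition~(ii), fix any weakly precompact set $C\sub X$ and any $p\in\N$. By the SWPG property applied with $\epsilon=\frac{1}{2p}$, there exists $n\in\N$ such that $C\sub nC_0+\frac{1}{2p}B_X=M_{n,p}$, as required.

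I do not foresee any obstacle: the argument is a direct matching of definitions once one observes that a ball of small radius is automatically $\epsilon$-weakly precompact for small~$\epsilon$, so the ``$+\epsilon B_X$'' slack in the SWPG definition exactly feeds the ``$\frac{1}{p}$-weakly precompact'' slack in property~$\mathfrak{KM}_w$. Lemma~\ref{lem:sum}(ii), already proved in the previous section, does the rest.
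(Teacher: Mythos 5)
Your proposal is correct and coincides with the paper's own proof: the paper defines exactly the same family $M_{n,p}:=nC_0+\frac{1}{2p}B_X$ and verifies condition~(i) via Lemma~\ref{lem:sum}(ii), with condition~(ii) following immediately from the SWPG definition applied with $\epsilon=\frac{1}{2p}$. Your write-up merely spells out the two easy ingredients (that $nC_0$ is $0$-weakly precompact and that $\frac{1}{2p}B_X$ is $\frac{1}{p}$-weakly precompact) which the paper leaves implicit.
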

\begin{proof} Let $C_0 \sub X$ be a weakly precompact set such that, for every weakly precompact set 
$C\sub X$ and for every $\epsilon>0$, there is $n\in \N$ such that $C \sub nC_0+\epsilon B_X$. 
For each $n\in \N$ and for each $p\in \N$ we define
$$
	M_{n,p}:=n C_0 + \frac{1}{2p}B_X,
$$
so that $M_{n,p}$ is $\frac{1}{p}$-weakly precompact (Lemma~\ref{lem:sum}(ii)). Therefore, the family $\{M_{n,p}:n,p\in \N\}$ witnesses that
$X$ has property~$\mathfrak{KM}_w$.
\end{proof}

\begin{question}\label{question:subspacesSWPG}
Let $X$ be a Banach space having property~$\mathfrak{KM}_w$. Is $X$ isomorphic to a subspace
of a SWPG space?
\end{question}

\subsection{Around weak sequential completeness}\label{subsection:wsc}

It is known that every SWCG space is weakly sequentially complete
(see \cite[Theorem~2.5]{sch-whe}). More generally, every Banach space having property~$\mathfrak{KM}$
is weakly sequentially complete (see \cite[Theorem~2.20]{kam-mer2}). On the other hand, it is clear that
if a Banach space is weakly sequentially complete and SWPG, then it is SWCG (cf. \cite[Theorem~2.2]{laj-rod-2}).
We do not know whether properties~$\mathfrak{KM}$ and~$\mathfrak{KM}_w$ are equivalent for weakly sequentially complete spaces
but, as we will see in Corollary~\ref{cor:L-embedded}, this is indeed true for $C$-weakly sequentially complete spaces 
in the sense of~\cite{kal-alt,kal-spu-2}.

The following lemma exploits the argument used to get the weak sequential completeness of SWCG spaces in \cite[Theorem~2.5]{sch-whe}.
A similar idea was used in the proof of the weak sequential completeness of Banach spaces having property~$\mathfrak{KM}$ 
(see \cite[Theorem~2.20]{kam-mer2}) and also in \cite[Lemma~2.3]{laj-rod-2}.

\begin{lem}\label{lem:SW}
Let $X$ be a Banach space and $\{M_{n,p}:n,p\in \Nat\}$ be a family of absolutely convex subsets of~$X$
such that $M_{n,p} \sub M_{n+1,p}$ for all $n,p\in \N$. 
Suppose that for each relatively weakly compact set $L \sub X$ and for each $p\in \N$ there is $n\in \N$ such that $L \sub M_{n,p}$.
Then for each weakly precompact set $H \sub X$ and for each $p\in \N$ there is $n\in \N$ such that $H \sub 2M_{n,p}$.
\end{lem}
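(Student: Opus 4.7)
The plan is a proof by contradiction. Suppose that for some weakly precompact $H \sub X$ and some $p \in \N$, one has $H \not\sub 2M_{n,p}$ for every $n$. Pick $x_n \in H \setminus 2M_{n,p}$ for each~$n$. Weak precompactness of~$H$ produces a weakly Cauchy subsequence, and the monotonicity $M_{n,p}\sub M_{n+1,p}$ (so that $x_{n_k}\notin 2M_{n_k,p}$ forces $x_{n_k}\notin 2M_{k,p}$ since $k\leq n_k$) lets me relabel and assume $(x_n)$ itself is weakly Cauchy and still satisfies $x_n \notin 2M_{n,p}$ for every~$n$. The aim is to produce a single $N \in \N$ with $\{x_n:n\in \N\}\sub 2M_{N,p}$, which then contradicts the choice of $x_n$ for every $n \geq N$.

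The natural exploitation of absolute convexity is the identity $2M_{N,p}=M_{N,p}+M_{N,p}$ and the decomposition $x_n = x_1 + (x_n-x_1)$, so the plan is to place both summands in a common $M_{N,p}$. Two relatively weakly compact subsets of~$X$ arise immediately: the singleton $\{x_1\}$, and the weakly null sequence of consecutive differences $(x_{k+1}-x_k)$, whose closed absolutely convex hull is relatively weakly compact by Krein's theorem. Both are absorbed by some common $M_{N,p}$ via the hypothesis. I~would then argue that the full set of double-indexed differences $\{x_n-x_m:n,m\in \N\}$ can be placed inside a single relatively weakly compact subset of~$X$ (built from the singleton and the Krein hull of the weakly null differences), absorbed by $M_{N,p}$; together with $x_1 \in M_{N,p}$, this yields $x_n = x_1 + (x_n-x_1) \in M_{N,p}+M_{N,p}=2M_{N,p}$, completing the contradiction via monotonicity.

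The main obstacle is precisely the control on $x_n - x_1$: the naive telescoping $x_n - x_1 = \sum_{k=1}^{n-1}(x_{k+1}-x_k)$ is a sum with all coefficients equal to~$1$, and so yields only $x_n - x_1 \in (n-1)\,M_{N,p}$, not $M_{N,p}$. The difficulty is sharpest when the weak$^*$-limit of $(x_n)$ in~$X^{**}$ does not belong to~$X$, so the sequence is not itself relatively weakly compact. Overcoming this requires a careful analysis of sequences of the form $(x_{n_k}-x_{m_k})_k$: those with both indices unbounded are weakly null in~$X$, while the problematic cases (one index fixed, the other tending to infinity) must be handled by repeated applications of the absorption hypothesis to Krein hulls of tail differences and by a case analysis mirroring the proof of weak sequential completeness of SWCG spaces in~\cite{sch-whe}. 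This technical core is the heart of the argument, and it is exactly what the paper's reference to~\cite{sch-whe} and~\cite[Lemma~2.3]{laj-rod-2} alludes to.
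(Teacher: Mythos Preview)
Your setup is right, and you have correctly isolated the two key ingredients: the weakly Cauchy subsequence $(x_n)$ with $x_n\notin 2M_{n,p}$, and the fact that the consecutive differences $(x_{k+1}-x_k)$ form a weakly null (hence relatively weakly compact) sequence, therefore absorbed by a single $M_{m_0,p}$. But the proof is not finished, and the stated ``aim'' points in the wrong direction.

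You try to place \emph{all} the $x_n$ inside one $2M_{N,p}$ via $x_n=x_1+(x_n-x_1)$, and you diagnose correctly that telescoping only yields $x_n-x_1\in(n-1)M_{N,p}$. This is not a technicality to be patched by Krein hulls or ``case analysis'': the set $\{x_n-x_1:n\in\N\}$ is merely weakly precompact (it is a shift of a weakly Cauchy, generally non-convergent, sequence), so the hypothesis gives no absorption of it into any $M_{N,p}$. Trying to absorb it is precisely as hard as the lemma itself. The paper's argument avoids this circularity by never attempting a global absorption. Instead one records, for each term, the least index $m_1(n)$ with $x_n\in M_{m_1(n),p}$ and the least $m_2(n)$ with $x_n\in 2M_{m_2(n),p}$; then one extracts a further subsequence $(x_{n_k})$ so that $m_1(n_k)<m_2(n_{k+1})$ for every~$k$ (possible because $x_n\notin 2M_{n,p}$ forces $m_2(n)>n$). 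The single-step decomposition
\[
x_{n_{k+1}}=(x_{n_{k+1}}-x_{n_k})+x_{n_k}\in M_{m_0,p}+M_{m_1(n_k),p}\sub 2M_{m_1(n_k),p}
\]
(valid once $m_1(n_k)\geq m_0$, which happens since $m_1(n_k)$ is strictly increasing) then gives $m_2(n_{k+1})\le m_1(n_k)$, contradicting the choice of the subsequence. The point is that one only ever sums \emph{two} terms, so absolute convexity does all the work; no telescoping is needed.
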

\begin{proof} We argue by contradiction. Suppose there exist a weakly precompact set $H \sub X$ and $p\in \N$ such that
$H \not\subseteq 2M_{n,p}$ for all $n\in \Nat$. Let $(x_n)_{n\in \N}$ be a sequence in~$H$
such that $x_n\not\in 2M_{n,p}$ for all $n\in \Nat$. 
Since $H$ is weakly precompact and $M_{n,p}\sub M_{n+1,p}$ for every $n\in \N$, by passing to a subsequence we can assume that $(x_n)_{n\in \N}$ is weakly Cauchy.
Note that the set $\{n\in \Nat: x_n \in 2M_{m,p}\}$ is finite for every $m\in \Nat$.

For each $n\in \Nat$ and for each $s\in \{1,2\}$ we define
$$
	m_s(n):=\min\Big\{m\in \Nat: \, x_n \in sM_{m,p}\Big\},
$$
so that $x_n\in sM_{m,p}$ if and only if $m\geq m_s(n)$. Observe that $m_2(n)\leq m_1(n)$ for all $n\in \N$ (because each $M_{m,p}$ is balanced
and so $M_{m,p}\sub 2M_{m,p}$).
Let $\psi:\Nat \to \Nat$ be any function such that ${\displaystyle \lim_{n\to \infty}}\frac{n}{\psi(n)}=0$
and $n\leq \psi(n)$ for all $n\in \Nat$.

Observe that there is a subsequence $(x_{n_k})_{k\in \N}$ such that
\begin{equation}\label{eqn:m2m1}
	\psi(m_1(n_k)) < m_2(n_{k+1}) \quad \mbox{for all }k\in \Nat.
\end{equation}
Indeed, set $n_1=1$ and suppose that $n_k \in \Nat$ has already been chosen for some $k\in \N$. 
Since $x_n\not \in 2M_{\psi(m_1(n_k)),p}$ whenever $n\geq \psi(m_1(n_k))$, we can choose $n_{k+1}\in \Nat$ with $n_{k+1}>n_k$ such that 
$x_{n_{k+1}}\not\in 2M_{\psi(m_1(n_k)),p}$ 
and so $\psi(m_1(n_k))<m_2(n_{k+1})$. 

On the other hand, since
$$
	m_1(n_k) \leq \psi(m_1(n_k)) \stackrel{\eqref{eqn:m2m1}}{<} m_2(n_{k+1}) \leq m_1(n_{k+1})
	\quad\mbox{for all }k\in \Nat,
$$
the sequence $(m_1(n_k))_{k\in \N}$ is strictly increasing and so
\begin{equation}\label{eqn:limit}
	\lim_{k\to \infty} \frac{m_1(n_k)}{\psi(m_1(n_{k}))}=0.
\end{equation}

Define $y_k:=x_{n_{k+1}}-x_{n_k}$ for all $k\in \Nat$, so that $(y_k)_{k\in\N}$ is a weakly null sequence in~$X$. 
By assumption, there is $m_0\in \Nat$ such that 
$y_k \in M_{m_0,p}$ for all $k\in \N$, hence 
$$
	x_{n_{k+1}}=y_k+x_{n_k}\in M_{m_0,p} + M_{m_1(n_k),p} \sub 2M_{m_1(n_k),p}
$$
whenever $m_1(n_k)\geq m_0$ (bear in mind that $M_{m_1(n_k),p}$ is convex) and so
$$
	\frac{m_1(n_k)}{\psi(m_1(n_k))} \geq \frac{m_2(n_{k+1})}{\psi(m_1(n_k))} \stackrel{\eqref{eqn:m2m1}}{>} 1  
$$
for large enough~$k$. This contradicts~\eqref{eqn:limit} and finishes the proof.
\end{proof}

It turns out that property~$\mathfrak{KM}_w$ can be characterized as follows: 

\begin{pro}\label{pro:defiMp}
Let $X$ be a Banach space. Then $X$ has property~$\mathfrak{KM}_w$ if and only if 
there is a family $\{M_{n,p}:n,p\in \Nat\}$ of subsets of~$X$ such that:
\begin{enumerate}
\item[(i)] $M_{n,p}$ is $\frac{1}{p}$-weakly precompact for all $n,p\in \N$;
\item[(ii)] for each relatively weakly compact set $C \sub X$ and for each $p\in \N$ there is $n\in \N$ such that $C \sub M_{n,p}$.
\end{enumerate}
\end{pro}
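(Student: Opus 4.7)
The ``only if'' direction is immediate: every relatively weakly compact set is weakly precompact, so a family witnessing property $\mathfrak{KM}_w$ \emph{a fortiori} satisfies the weaker absorption condition stated in the proposition.

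For the ``if'' direction, the idea is to invoke Lemma~\ref{lem:SW}, which is designed precisely to promote the absorption of relatively weakly compact sets into the absorption of all weakly precompact sets, at the cost of a dilation by~$2$. Starting from a family $\{M_{n,p}:n,p\in\N\}$ satisfying~(i) and~(ii) of the proposition, I would first replace it by a family of absolutely convex closed sets that is increasing in~$n$, absorbing the factor-$2$ loss from $\overline{{\rm aco}}$ (Theorem~\ref{theo:Krein}) together with Lemma~\ref{lem:sum}(iii) and the elementary observation that a finite union of $\epsilon$-weakly precompact sets is $\epsilon$-weakly precompact (pigeonhole on subsequences). Concretely, set
$$
N_{n,p}:=\overline{{\rm aco}}\Big(\bigcup_{i=1}^{n} M_{i,4p}\Big)
\quad\text{for all } n,p\in\N.
$$
Each $N_{n,p}$ is absolutely convex, closed, $\frac{1}{2p}$-weakly precompact, and satisfies $N_{n,p}\sub N_{n+1,p}$. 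Moreover, for each relatively weakly compact set $C\sub X$ and each $p\in\N$, condition~(ii) applied with index~$4p$ gives some $n\in\N$ with $C\sub M_{n,4p}\sub N_{n,p}$, so the hypotheses of Lemma~\ref{lem:SW} are in place.

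Lemma~\ref{lem:SW} then yields, for every weakly precompact set $H\sub X$ and every $p\in\N$, an index $n\in\N$ with $H\sub 2N_{n,p}$. Define $\tilde{M}_{n,p}:=2N_{n,p}$; applying Lemma~\ref{lem:operator-compact} to the operator of scalar multiplication by~$2$ (of norm~$2$), each $\tilde{M}_{n,p}$ is $\frac{1}{p}$-weakly precompact. Hence $\{\tilde{M}_{n,p}:n,p\in\N\}$ witnesses property~$\mathfrak{KM}_w$. The only real point of care is the constant-bookkeeping: the factor~$4$ inserted in the inner index of~$M_{i,4p}$ precisely absorbs the two factor-$2$ losses arising from passing to~$\overline{{\rm aco}}$ and from applying Lemma~\ref{lem:SW}. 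Apart from this, the argument is a direct composition of tools already developed in Section~\ref{section:epsilon-weaklyprecompact} and in Lemma~\ref{lem:SW} itself.
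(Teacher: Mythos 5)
Your proof is correct and follows essentially the same route as the paper's: the paper likewise forms ${\rm aco}\bigl(\bigcup_{i=1}^n M_{i,4p}\bigr)$, observes it is $\frac{1}{2p}$-weakly precompact via Theorem~\ref{theo:Krein}, checks monotonicity and the absorption of relatively weakly compact sets, and applies Lemma~\ref{lem:SW} to conclude with the dilated family $\{2\tilde{M}_{n,p}\}$, with exactly your constant bookkeeping. Your only (harmless) deviation is taking the \emph{closed} absolutely convex hull and invoking Lemma~\ref{lem:sum}(iii), which the paper does not need since Lemma~\ref{lem:SW} only requires absolute convexity.
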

\begin{proof} The ``only if'' part is obvious. 
To prove the ``if'' part, note that for each $n\in \N$ and for each $p\in \N$ the set $\bigcup_{i=1}^n M_{i,4p}$ is $\frac{1}{4p}$-weakly precompact
and therefore
$$
	\tilde{M}_{n,p}:={\rm aco}\left(\bigcup_{i=1}^n M_{i,4p}\right)
$$
is $\frac{1}{2p}$-weakly precompact (by Theorem~\ref{theo:Krein}), hence $2\tilde{M}_{n,p}$ is $\frac{1}{p}$-weakly precompact.
Clearly, $\tilde{M}_{n,p} \sub \tilde{M}_{n+1,p}$ for all $n,p\in \N$.
In addition, given any relatively weakly compact set $C \sub X$ and $p\in \N$, there is 
$n\in \N$ such that $C \sub M_{n,4p} \sub \tilde{M}_{n,p}$.
From Lemma~\ref{lem:SW} it follows that $X$ has property~$\mathfrak{KM}_w$ as witnessed by the family $\{2\tilde{M}_{n,p}:n,p\in \Nat\}$.
\end{proof}

\begin{lem}\label{lem:weaklycompact-weaklyprecompact}
Let $X$ be a Banach space, $M \sub X$ and $\epsilon\geq 0$. If $M$ is $\epsilon$-relatively weakly compact,
then $M$ is $2\epsilon$-weakly precompact.
\end{lem}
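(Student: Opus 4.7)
Plan: The strategy is to combine the equivalence between $\epsilon$-relative weak compactness of $M$ and its approximation in norm by a weakly compact subset of~$X$ with the Eberlein-\v{S}mulian theorem and the subadditivity of~$\delta$ (Lemma~\ref{lem:sum}(i) and Lemma~\ref{lem:epsilon-compact+}).

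First I would invoke the ``sharp'' quantitative characterization of $\epsilon$-relative weak compactness (Granero's theorem on the equality between the De Blasi measure of weak non-compactness and the bidual distance $d(\overline{M}^{w^*},X)$, which can also be read off from a Davis-Figiel-Johnson-Pe\l{}czy\'nski type construction): the assumption that $\overline{M}^{w^*} \sub X + \epsilon B_{X^{**}}$ implies that for every $\eta>0$ there exists a weakly compact set $K\sub X$ such that $M \sub K + (\epsilon+\eta)B_X$.

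Fix such an $\eta$ and such a $K$, and let $(x_n)_{n\in \N}$ be any sequence in~$M$. Write $x_n=y_n+r_n$ with $y_n\in K$ and $\|r_n\|\leq \epsilon+\eta$. By the Eberlein-\v{S}mulian theorem, $(y_n)_{n\in \N}$ admits a weakly convergent, hence weakly Cauchy, subsequence $(y_{n_k})_{k\in \N}$, so that $\delta(y_{n_k})=0$. On the other hand, the bound $\|r_{n_k}\|\leq \epsilon+\eta$ together with the definition of~$\delta$ yields $\delta(r_{n_k})\leq 2(\epsilon+\eta)$, and Lemma~\ref{lem:sum}(i) then gives
$$
\delta(x_{n_k}) \leq \delta(y_{n_k})+\delta(r_{n_k}) \leq 2(\epsilon+\eta).
$$
Hence $M$ is $2(\epsilon+\eta)$-weakly precompact for every $\eta>0$, and Lemma~\ref{lem:epsilon-compact+} immediately gives that $M$ is $2\epsilon$-weakly precompact.

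The main obstacle is the first step, i.e.\ passing from the bidual condition $\overline{M}^{w^*}\sub X+\epsilon B_{X^{**}}$ to a \emph{norm} approximation of~$M$ by a weakly compact set in~$X$. The precise constant matters: only the sharp Granero-type approximation $M \sub K + (\epsilon+\eta)B_X$, and not merely $M \sub K + (2\epsilon+\eta)B_X$, delivers the factor~$2\epsilon$ in the conclusion; the cruder Fabian-Montesinos-Zizler type estimate would only yield $4\epsilon$-weak precompactness.
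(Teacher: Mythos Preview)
Your first step is the problem. You claim that $\overline{M}^{w^*}\subseteq X+\epsilon B_{X^{**}}$ forces, for every $\eta>0$, the existence of a weakly compact $K\subseteq X$ with $M\subseteq K+(\epsilon+\eta)B_X$; in other words, that the De~Blasi measure $\omega(M)=\inf\{t>0:\exists\,K\ \text{weakly compact},\ M\subseteq K+tB_X\}$ coincides with $\hat d(\overline{M}^{w^*},X)$. This is not true in general Banach spaces. One always has $\hat d(\overline{M}^{w^*},X)\le\omega(M)$, but Astala and Tylli showed that there is no universal constant~$C$ with $\omega(M)\le C\,\hat d(\overline{M}^{w^*},X)$ for all bounded~$M$ in all Banach spaces; so not even your ``cruder'' fallback estimate is available. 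Granero's theorem concerns the behaviour of $\hat d(\overline{\,\cdot\,}^{w^*},X)$ under passing to convex hulls and says nothing about~$\omega$, and the DFJP factorisation does not deliver this kind of quantitative norm approximation either. Since the remainder of your argument is straightforward once the decomposition $x_n=y_n+r_n$ is in hand, the whole weight rests on this unavailable step.

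The paper's proof avoids the De~Blasi measure altogether. It argues by contraposition via Behrends' theorem: if $M$ is not $\epsilon'$-weakly precompact for some $\epsilon'>2\epsilon$, then by Theorem~\ref{theo:Behrends-KPS}(i) the set $M$ contains an $\ell_1$-sequence $(x_n)$ with constant~$\epsilon'/2$, and then \cite[Lemma~5(ii)]{kal-alt} gives $\|x^{**}-x\|\ge\epsilon'/2>\epsilon$ for every $w^*$-cluster point $x^{**}$ of~$(x_n)$ and every $x\in X$, contradicting $\overline{M}^{w^*}\subseteq X+\epsilon B_{X^{**}}$.
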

\begin{proof}
It suffices to check that $M$ is $\epsilon'$-weakly precompact for every $\epsilon'>2\epsilon$ (Lemma~\ref{lem:epsilon-compact+}). 
By contradiction, suppose that $M$ is not $\epsilon'$-weakly precompact. Then $M$ contains an $\ell_1$-sequence 
with constant~$\frac{\epsilon'}{2}$, say~$(x_n)_{n\in \N}$ (by Theorem~\ref{theo:Behrends-KPS}(i)). Hence
$$
	\|x^{**}-x\| \geq \frac{\epsilon'}{2} > \epsilon
	\quad\text{for every $x^{**}\in {\rm clust}_{X^{**}}(x_{n})$ and for every $x\in X$,}
$$
see \cite[Lemma~5(ii)]{kal-alt}. This contradicts that $\overline{M}^{w^*} \sub X+\epsilon B_{X^{**}}$.
\end{proof}

\begin{theo}\label{theo:wsc}
If a Banach space $X$ has property~$\mathfrak{KM}$, then it has property~$\mathfrak{KM}_w$.
\end{theo}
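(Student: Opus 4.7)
The plan is to reduce the statement to the characterization of property~$\mathfrak{KM}_w$ given in Proposition~\ref{pro:defiMp}. That proposition allows us to check condition~(ii) of Definition~\ref{defi:KMw} only against relatively weakly compact sets (rather than all weakly precompact sets), which is exactly what property~$\mathfrak{KM}$ provides.

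So I would start with a family $\{M_{n,p}:n,p\in \N\}$ witnessing property~$\mathfrak{KM}$: each $M_{n,p}$ is $\frac{1}{p}$-relatively weakly compact, and every relatively weakly compact set $C \sub X$ is contained in some $M_{n,p}$ for each prescribed~$p$. The quantitative link between the two notions is Lemma~\ref{lem:weaklycompact-weaklyprecompact}, which upgrades $\frac{1}{p}$-relative weak compactness to $\frac{2}{p}$-weak precompactness. Applying this to the subfamily indexed by even denominators, the sets $\tilde{M}_{n,p}:=M_{n,2p}$ are $\frac{1}{p}$-weakly precompact for all $n,p\in \N$.

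Condition~(ii) of Proposition~\ref{pro:defiMp} is then immediate for $\{\tilde{M}_{n,p}\}$: given a relatively weakly compact set $C \sub X$ and $p\in \N$, property~$\mathfrak{KM}$ yields $n\in \N$ with $C \sub M_{n,2p}=\tilde{M}_{n,p}$. Thus the family $\{\tilde{M}_{n,p}:n,p\in \N\}$ meets the hypotheses of Proposition~\ref{pro:defiMp}, and we conclude that $X$ has property~$\mathfrak{KM}_w$.

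There is no real obstacle here, since the heavy lifting was already done in Proposition~\ref{pro:defiMp} (which absorbs the passage from relatively weakly compact to weakly precompact test sets via Lemma~\ref{lem:SW}) and in Lemma~\ref{lem:weaklycompact-weaklyprecompact} (which converts $\epsilon$-relative weak compactness into $2\epsilon$-weak precompactness using Rosenthal's $\ell_1$-theorem through Theorem~\ref{theo:Behrends-KPS}). The proof amounts to wiring these two ingredients together with a harmless reindexing $p \mapsto 2p$.
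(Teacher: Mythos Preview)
Your proposal is correct and essentially identical to the paper's proof: take the family $\{M_{n,p}\}$ from property~$\mathfrak{KM}$, apply Lemma~\ref{lem:weaklycompact-weaklyprecompact} to get that each $M_{n,p}$ is $\frac{2}{p}$-weakly precompact, reindex via $p\mapsto 2p$, and invoke Proposition~\ref{pro:defiMp}. Your explanation of how Proposition~\ref{pro:defiMp} absorbs the passage from relatively weakly compact to weakly precompact test sets (via Lemma~\ref{lem:SW}) is also accurate.
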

\begin{proof}
Let $\{M_{n,p}:n,p\in \N\}$ be a family of subsets of~$X$ as in Definition~\ref{defi:KM}.
Note that each $M_{n,p}$ is $\frac{2}{p}$-weakly precompact (by Lemma~\ref{lem:weaklycompact-weaklyprecompact}).
Then $\{M_{n,2p}:n,p\in \N\}$ satisfies the conditions of Proposition~\ref{pro:defiMp}, hence
$X$ has property~$\mathfrak{KM}_w$.
\end{proof}

Following \cite{kal-alt,kal-spu-2}, a Banach space~$X$ is said to be {\em $C$-weakly sequentially complete} for some $C\geq 0$ if
for each bounded sequence $(x_n)_{n\in \N}$ in $X$ we have
$$
	\inf_{x\in X}\|x^{**}-x\| \leq C \delta(x_{n}) 
	\quad \text{for every $x^{**}\in {\rm clust}_{X^{**}}(x_n)$.}
$$
This quantitative version of weak sequential completeness appeared 
first in~\cite{god-kal-li}, where it was shown that every $L$-embedded Banach space
is $1$-weakly sequentially complete (see \cite[Lemma~IV.7]{god-kal-li}).
In fact, every $L$-embedded Banach space is $\frac{1}{2}$-weakly sequentially complete,
see \cite[Theorem~1]{kal-alt}. We refer the reader to \cite[Chapter~IV]{har-wer-wer}
for complete information on~$L$-embedded Banach spaces.

\begin{lem}\label{lem:L-embedded}
Let $X$ be a $C$-weakly sequentially complete Banach space for some $C \geq 0$. Let $M \sub X$ and $\epsilon'>\epsilon\geq 0$.
If $M$ is $\epsilon$-weakly precompact, then it is $2C\epsilon'$-relatively weakly compact.
\end{lem}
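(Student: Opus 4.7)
My plan is to argue by contradiction. Suppose $M$ is $\epsilon$-weakly precompact but not $2C\epsilon'$-relatively weakly compact; then there is some $x^{**}\in\overline{M}^{w^*}$ with $\mathrm{dist}(x^{**},X)>2C\epsilon'$. First I would apply Lemma~\ref{lem:epsilon-compact+} to upgrade $\epsilon$-weak precompactness of $M$ to $\epsilon'$-weak precompactness, giving me a slight margin.

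The crux of the argument is to extract a sequence $(x_n)$ in $M$ together with a $w^*$-cluster point $y^{**}$ of $(x_n)$ in $X^{**}$ such that $\mathrm{dist}(y^{**},X)>2C\epsilon'$. To do this, I would use Hahn--Banach to pick $x^{***}\in B_{X^{***}}$ with $x^{***}|_X=0$ and $\langle x^{***},x^{**}\rangle>2C\epsilon'$, then appeal to Goldstine's theorem to approximate $x^{***}$ in the $\sigma(X^{***},X^{**})$-topology by a net $(x^*_\lambda)$ in $B_{X^*}$. This net satisfies $x^*_\lambda\xrightarrow{w^*}0$ in~$X^*$ while $\langle x^*_\lambda,x^{**}\rangle\to\langle x^{***},x^{**}\rangle>2C\epsilon'$. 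Combining this with the defining net of $x^{**}\in\overline{M}^{w^*}$ via a diagonal argument (carried out within a separable subspace of $X^*$ rich enough to capture the distance estimate) should produce the desired $(x_n)$ and~$y^{**}$.

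Having $(x_n)$ and $y^{**}$, I would next apply $\epsilon'$-weak precompactness to pass to a subsequence $(x_{n_k})$ with $\delta(x_{n_k})\leq\epsilon'$, arranged so that $y^{**}$ persists as a $w^*$-cluster point of $(x_{n_k})$. Then $C$-weak sequential completeness applied to the bounded sequence $(x_{n_k})$ and the cluster point~$y^{**}$ yields
$$
\mathrm{dist}(y^{**},X)\leq C\,\delta(x_{n_k})\leq C\epsilon'<2C\epsilon',
$$
contradicting $\mathrm{dist}(y^{**},X)>2C\epsilon'$ and completing the proof.

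The principal obstacle is the sequential extraction step. Since the $w^*$-topology on bounded subsets of $X^{**}$ is not generally first-countable, not every element of $\overline{M}^{w^*}$ is approached by a sequence from~$M$. The delicate part is arranging the diagonal construction so that $y^{**}$ is not only realized as a $w^*$-cluster point of a sequence in $M$, but also remains a cluster point after the subsequence extraction that controls~$\delta$. The factor of~$2$ in $2C\epsilon'$ presumably reflects an unavoidable loss in this passage from the topological to the sequential setting, reminiscent of the Granero--H\'ajek--Montesinos type bound between the topological and sequential measures of non-weak-compactness.
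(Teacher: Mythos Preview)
Your plan has a genuine gap at the step you yourself flag as the ``principal obstacle'', and a closer look shows it is fatal to the argument as written. The problem is twofold. First, you aim in step~3 for a cluster point $y^{**}$ of a sequence in~$M$ with $\mathrm{dist}(y^{**},X)>2C\epsilon'$, but the Granero-type inequality you invoke carries an unavoidable factor~$2$ loss: starting from some $x^{**}\in\overline{M}^{w^*}$ at distance $>2C\epsilon'$ from~$X$, the best you can extract sequentially is a sequence whose cluster points lie at distance $>C\epsilon'-\eta$ from~$X$. If you plug that weakened estimate into your step~5, you get $C\epsilon'-\eta<\mathrm{dist}(y^{**},X)\leq C\epsilon'$, which is no contradiction. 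Second, and more seriously, even granting a good cluster point~$y^{**}$, $\epsilon'$-weak precompactness only hands you \emph{some} subsequence with $\delta\leq\epsilon'$; you have no mechanism to force $y^{**}$ to remain a cluster point of that particular subsequence, and in general it need not.

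The paper avoids both issues by invoking \cite[Theorem~2.3]{ang-cas2} directly: if $M$ is not $2C\epsilon'$-relatively weakly compact, that result produces (for any small $\eta>0$) a sequence $(x_n)$ in~$M$ such that \emph{every} $w^*$-cluster point of~$(x_n)$ lies at distance $>C\epsilon'-\eta$ from~$X$. Since cluster points of a subsequence are cluster points of the original sequence, this estimate survives \emph{any} subsequence extraction, and $C$-weak sequential completeness then forces $\delta(x_{n_k})>\epsilon'-\eta/C>\epsilon$ for every subsequence $(x_{n_k})$, contradicting $\epsilon$-weak precompactness of~$M$. The moral is that you must work with all cluster points simultaneously rather than tracking a single distinguished one; the factor~$2$ in the statement exactly matches the loss in the Angosto--Cascales inequality, and the contradiction is obtained against $\epsilon$-weak precompactness, not against a distance estimate for a fixed~$y^{**}$.
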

\begin{proof} There is nothing to prove if $C=0$ (i.e., if $X$ is reflexive), so we assume that $C>0$. 
Suppose that the bounded set $M$ is not $2C\epsilon'$-relatively weakly compact. Then for any $0<\eta< C(\epsilon'-\epsilon)$
there is a sequence $(x_n)_{n\in \N}$ in~$M$ such that 
$$
	C\epsilon' -\eta < \inf \{\|x^{**}-x\|: \, x^{**}\in {\rm clust}_{X^{**}}(x_n), \, x\in X\},
$$
see \cite[Theorem~2.3]{ang-cas2}. Since $X$ is $C$-weakly sequentially complete, 
we deduce that $C\epsilon < C\epsilon'-\eta < C\delta(x_{n_k})$ for every subsequence $(x_{n_k})_{k\in \N}$.
Hence $M$ is not $\epsilon$-weakly precompact.
\end{proof}

\begin{cor}\label{cor:L-embedded}
Let $X$ be a $C$-weakly sequentially complete Banach space for some $C\geq 0$. 
Then $X$ has property~$\mathfrak{KM}$ if and only if it has property~$\mathfrak{KM}_w$.
\end{cor}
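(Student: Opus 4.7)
The plan is to prove the two implications separately. The forward direction, from property~$\mathfrak{KM}$ to property~$\mathfrak{KM}_w$, is immediate from Theorem~\ref{theo:wsc} and does not even require the $C$-weak sequential completeness hypothesis. So all the content lies in the converse.

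For the converse, assume $X$ has property~$\mathfrak{KM}_w$ and fix a witnessing family $\{M_{n,p} : n,p \in \Nat\}$ as in Definition~\ref{defi:KMw}. If $C=0$ then $X$ is reflexive, every bounded subset of~$X$ is already relatively weakly compact, and the same family $\{M_{n,p}\}$ trivially witnesses property~$\mathfrak{KM}$; so assume $C>0$. The key step is to upgrade each ``$\tfrac{1}{p}$-weakly precompact'' set $M_{n,p}$ to a ``$\tfrac{2C}{p}$-relatively weakly compact'' set. Fix $n,p\in\Nat$. By Lemma~\ref{lem:L-embedded}, $M_{n,p}$ is $2C\epsilon'$-relatively weakly compact for every $\epsilon' > \tfrac{1}{p}$. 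The property $\overline{M}^{w^*}\subseteq X + \delta B_{X^{**}}$ is closed under $\delta\downarrow$: indeed, for any $x^{**}\in\overline{M_{n,p}}^{w^*}$, having $\mathrm{dist}(x^{**},X)\leq 2C\epsilon'$ for all $\epsilon'>\tfrac{1}{p}$ forces $\mathrm{dist}(x^{**},X)\leq \tfrac{2C}{p}$. Hence $M_{n,p}$ is $\tfrac{2C}{p}$-relatively weakly compact.

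All that remains is a re-indexing. For each $n,q\in\Nat$ set
$$
	N_{n,q} := M_{n,\,\lceil 2Cq\rceil}.
$$
Then $N_{n,q}$ is $\tfrac{2C}{\lceil 2Cq\rceil}$-relatively weakly compact, and $\tfrac{2C}{\lceil 2Cq\rceil}\leq \tfrac{1}{q}$. Moreover, given any relatively weakly compact set $D\subseteq X$ (which is, in particular, weakly precompact) and any $q\in\Nat$, the defining property of $\{M_{n,p}\}$ furnishes some $n\in\Nat$ with $D\subseteq M_{n,\lceil 2Cq\rceil}=N_{n,q}$. Therefore $\{N_{n,q}:n,q\in\Nat\}$ witnesses property~$\mathfrak{KM}$, as required.

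There is essentially no obstacle: both directions reduce to results already established in the paper. The only bookkeeping point is the observation that ``$\delta$-relatively weakly compact'' is a closed property in~$\delta$, which lets us pass from $2C\epsilon'$-relative weak compactness (for every $\epsilon'>\tfrac{1}{p}$) to $\tfrac{2C}{p}$-relative weak compactness; after that, the substitution $p=\lceil 2Cq\rceil$ converts the $\mathfrak{KM}_w$ family into a $\mathfrak{KM}$ family.
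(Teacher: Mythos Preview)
Your approach is the same as the paper's: invoke Lemma~\ref{lem:L-embedded} to turn $\tfrac{1}{p}$-weak precompactness of each $M_{n,p}$ into relative weak compactness with a controlled constant, and then reindex. The paper does this more directly by fixing an integer $q>2C$ and setting $N_{n,p}:=M_{n,qp}$: since $q>2C$ strictly, one can pick $\epsilon'$ with $\tfrac{1}{qp}<\epsilon'<\tfrac{1}{2Cp}$ and Lemma~\ref{lem:L-embedded} gives $2C\epsilon'<\tfrac{1}{p}$ outright, so no limiting argument is needed.

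Your limiting step contains an unjustified claim. You assert that ``$\overline{M}^{w^*}\subseteq X+\delta B_{X^{**}}$ is closed under $\delta\downarrow$'' on the grounds that $\mathrm{dist}(x^{**},X)\le \tfrac{2C}{p}$; but membership in $X+\tfrac{2C}{p}B_{X^{**}}$ asks for this distance to be \emph{attained}, i.e., for $X$ to be proximinal in~$X^{**}$, which you have not established. Concretely, whenever $2Cq\in\Nat$ your choice $N_{n,q}=M_{n,\lceil 2Cq\rceil}$ yields only $\delta$-relative weak compactness for every $\delta>\tfrac{1}{q}$, not for $\delta=\tfrac{1}{q}$. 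The repair is trivial---replace $\lceil 2Cq\rceil$ by $\lceil 2Cq\rceil+1$, or follow the paper and fix any integer strictly larger than~$2C$---so this is a cosmetic gap rather than a structural one.
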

\begin{proof} 
The ``only if'' part follows from Theorem~\ref{theo:wsc} and does not require the additional assumption on~$X$.
Conversely, suppose that $X$ has property $\mathfrak{KM}_w$ and let $\{M_{n,p}:n,p\in \N\}$ be a family of subsets of~$X$ as in Definition~\ref{defi:KMw}. Choose
$q\in \N$ with $q > 2C$. Observe that
$M_{n,qp}$ is $\frac{1}{p}$-relatively weakly compact for all $n,p\in \N$, by Lemma~\ref{lem:L-embedded}. 
Therefore, $\{M_{n,qp}:n,p\in\N\}$ satisfies the conditions of Definition~\ref{defi:KM}
and $X$ has property~$\mathfrak{KM}$.
\end{proof}

\begin{question}\label{question:wsc}
Let $X$ be a weakly sequentially complete Banach space having property~$\mathfrak{KM}_w$. Does
$X$ have property~$\mathfrak{KM}$?
\end{question}

\subsection{Quotients}\label{subsection:quotients}

In general, property~$\mathfrak{KM}_w$ is not preserved by quotients. Indeed, every separable Banach space is a quotient of~$\ell_1$
(which is SWCG and so it has property~$\mathfrak{KM}_w$), but there are separable Banach spaces without 
property~$\mathfrak{KM}_w$, like $C[0,1]$ (see Corollary~\ref{cor:CK} in the next subsection). The main result
of this subsection is a three-space type result for property~$\mathfrak{KM}_w$, namely:

\begin{theo}\label{theo:quotient-Mp}
Let $X$ be a Banach space and $Y \sub X$ be a subspace not containing subspaces isomorphic to~$\ell_1$.
Then $X$ has property~$\mathfrak{KM}_w$ if and only if $X/Y$ has property~$\mathfrak{KM}_w$.
\end{theo}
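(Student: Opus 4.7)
The plan is to reduce both directions to a quantitative lifting lemma, exploiting the hypothesis $\ell_1\not\hookrightarrow Y$:

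\emph{Lifting Lemma.} If $A\subseteq X$ is bounded and $q(A)$ is $\epsilon$-weakly precompact in $X/Y$, where $q\colon X\to X/Y$ denotes the quotient map, then $A$ itself is $2\epsilon$-weakly precompact in $X$.

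Granting this, the \emph{only if} direction would follow immediately: given a family $\{M_{n,p}\}$ witnessing $\mathfrak{KM}_w$ for $X$, each $q(M_{n,p})$ is $\frac{1}{p}$-weakly precompact in $X/Y$ by Lemma~\ref{lem:operator-compact}, and any weakly precompact $D\subseteq X/Y$ admits a bounded lifting $\tilde D\subseteq X$ with $q(\tilde D)=D$ (take preimages of norm at most $2\|\cdot\|_{X/Y}$); the $\epsilon=0$ case of the lemma then makes $\tilde D$ weakly precompact, so $\tilde D\subseteq M_{n,p}$ for some $n$ and hence $D\subseteq q(M_{n,p})$. For the \emph{if} direction I would begin with a family $\{N_{n,p}\}$ witnessing $\mathfrak{KM}_w$ for $X/Y$ and define
$$
M_{(n,m),p}:=q^{-1}(N_{n,2p})\cap m B_X\qquad (n,m,p\in\N),
$$
re-indexed by a bijection $\N\to\N\times\N$. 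Each $M_{(n,m),p}$ is bounded with $q(M_{(n,m),p})\subseteq N_{n,2p}$, a $\frac{1}{2p}$-weakly precompact set, so the lemma yields $\frac{1}{p}$-weak precompactness in $X$; coverage of a weakly precompact $C\subseteq X$ is automatic on taking $m\geq\sup\{\|x\|:x\in C\}$ and $n$ with $q(C)\subseteq N_{n,2p}$.

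The lifting lemma I plan to prove by contradiction. If $A$ is not $2\epsilon$-weakly precompact, then, for some $\epsilon'>\epsilon$, Theorem~\ref{theo:Behrends-KPS}(i) supplies an $\ell_1$-sequence $(\tilde x_k)\subseteq A$ with constant $\epsilon'$, and after passing to a subsequence one may assume $\delta(q(\tilde x_k))\leq\epsilon$ in $X/Y$. I would then apply Rosenthal's $\ell_1$-theorem to $(q(\tilde x_k))$ to split the argument into two subcases. When $(q(\tilde x_k))$ has a weakly Cauchy subsequence, the differences $z_k=\tilde x_{2k+1}-\tilde x_{2k}$ form an $\ell_1$-sequence in $X$ with weakly null $q$-images, so Mazur's theorem produces convex blocks $w_l$ of $(z_k)$ with $\|q(w_l)\|_{X/Y}\to 0$; choosing $y_l\in Y$ with $\|w_l-y_l\|\to 0$ and transferring the $\ell_1$-structure of $(w_l)$ to $(y_l)\subseteq Y$ via a perturbation argument contradicts $\ell_1\not\hookrightarrow Y$. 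When $(q(\tilde x_k))$ instead contains an $\ell_1$-subsequence in $X/Y$, its constant is at most $\epsilon/2<\epsilon'/2$ by Theorem~\ref{theo:Behrends-KPS}(ii); I would exploit the gap by selecting $y_k\in Y$ nearly attaining $\mathrm{dist}(\tilde x_k,Y)=\|q(\tilde x_k)\|_{X/Y}$. The hypothesis $\ell_1\not\hookrightarrow Y$ then forces a weakly Cauchy subsequence of $(y_k)$, which, combined with convex-block differences of $(\tilde x_k)$ and Mazur's theorem applied to the weakly null differences of the $y_k$'s, should isolate an $\ell_1$-sequence in $Y$.

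The main obstacle will be the second subcase of the lifting lemma. Since $(q(\tilde x_k))$ is $\ell_1$-equivalent in $X/Y$, Mazur's theorem cannot be applied directly there, so the extraction of an $\ell_1$-sequence in $Y$ must instead exploit the quantitative gap between the $\ell_1$-constant $\epsilon'$ in $X$ and the bound $\leq\epsilon/2$ in $X/Y$, pulling the ``excess'' $\ell_1$-mass into $Y$ via the approximations $y_k$ and using $\ell_1\not\hookrightarrow Y$ as the source of the contradiction.
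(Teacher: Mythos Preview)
Your overall architecture is right and matches the paper: both directions reduce to a quantitative Lohman-type lifting, and the paper proves exactly such a lemma (Proposition~3.12). The ``only if'' and ``if'' constructions you give are correct; the paper's ``if'' direction uses lifts $M_{n,p}$ of the $\tilde{M}_{n,p}$'s together with $M'_{n,p}:=M_{n,p}+nB_Y$, but your $q^{-1}(N_{n,2p})\cap mB_X$ works equally well once the lifting lemma is available.

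The genuine gap is your Case~2 of the lifting lemma. Approximating $\tilde x_k$ by $y_k\in Y$ at distance $\approx\|q(\tilde x_k)\|_{X/Y}$ does not give a \emph{small} perturbation: knowing that the $\ell_1$-constant of $(q(\tilde x_k))$ is at most~$\epsilon/2$ says nothing about the individual norms $\|q(\tilde x_k)\|$, which may be as large as~$\|\tilde x_k\|$. Passing to a weakly Cauchy subsequence of $(y_k)$ and taking differences then leaves you with $\tilde x_{2k+1}-\tilde x_{2k}=(y_{2k+1}-y_{2k})+\big((\tilde x_{2k+1}-y_{2k+1})-(\tilde x_{2k}-y_{2k})\big)$, where the second bracket has norm controlled only by $\|q(\tilde x_{2k+1})\|+\|q(\tilde x_{2k})\|$, which is not small; so no $\ell_1$-sequence in~$Y$ emerges from this route.

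The paper avoids the dichotomy altogether. From $\delta(q(x_n))\leq\epsilon$ one gets, for $z_m:=x_{2m+1}-x_{2m}$, the uniform bound $\limsup_m|\varphi(q(z_m))|\leq\epsilon$ for every $\varphi\in B_{(X/Y)^*}$. The key tool is then a \emph{quantitative} Mazur theorem due to Angosto--Cascales--Namioka (Theorem~3.10/Corollary~3.11): it produces a convex block subsequence $(\tilde z_k)$ of $(z_m)$ with $\|q(\tilde z_k)\|_{X/Y}<\epsilon+\eta$, without any weak-nullity assumption. From there one proceeds exactly as in your Case~1. This also yields the sharper constant~$1$ (i.e.\ $q(M)$ $\epsilon$-weakly precompact $\Rightarrow$ $M$ $\epsilon$-weakly precompact), so the doubling in your family $N_{n,2p}$ becomes unnecessary. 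If you prefer to keep classical Mazur only, an alternative fix is to note that $\delta(q(\tilde x_k))\leq\epsilon$ forces every subsequence of $(q(\tilde x_k))$ to admit $(\epsilon/2+\eta)$-$\ell_1$-blocks in Behrends' sense, and iterate to build absolute-convex blocks $\tilde z_l$ of $(\tilde x_k)$ with $\|q(\tilde z_l)\|\leq\epsilon/2+\eta$; but this is not the argument you described.
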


To deal with Theorem~\ref{theo:quotient-Mp} we need a quantitative version of Mazur's theorem
on the existence of norm convergent convex block subsequences of weakly convergent sequences. The following result
is a particular case of \cite[Theorem~4.1]{ang-cas-nam}:

\begin{theo}\label{theo:ACN}
Let $Z$ be a Banach space, $(z_j)_{n\in \N}$ be a bounded sequence in~$Z$, $K \sub Z^*$ be a $w^*$-compact set, and $a>0$. Suppose that
for each $z^*\in K$ there is $j_{z^*}\in \N$ such that 
$|z^*(z_j)|\leq a$ for every $j\geq j_{z^*}$.
Then for every $\epsilon>0$ there is $z_\epsilon\in {\rm co}(\{z_j:j\in \N\})$ such that $|z^*(z_\epsilon)| \leq a +\epsilon$
for all $z^*\in K$. 
\end{theo}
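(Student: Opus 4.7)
The plan is to argue by contradiction, using a symmetrization reduction followed by Sion's minimax theorem to turn the pointwise-eventual hypothesis into a uniform bound on a convex combination.

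First I would symmetrize: set $\tilde K := K \cup (-K)$, still $w^*$-compact. The hypothesis becomes ``for each $z^* \in \tilde K$, $z^*(z_j) \leq a$ for all sufficiently large~$j$,'' and the conclusion reduces to producing $z_\epsilon \in \mathrm{co}(\{z_j : j \in \N\})$ with $z^*(z_\epsilon) \leq a+\epsilon$ for every $z^* \in \tilde K$. Since the supremum of a linear functional over any set equals its supremum over the $w^*$-closed convex hull, it suffices to show
\[
\inf_{z \in \mathrm{co}(\{z_j\})} \; \sup_{z^* \in Y} \; z^*(z) \; \leq \; a + \epsilon,
\]
where $Y := \overline{\mathrm{co}}^{w^*}(\tilde K)$ is $w^*$-compact and convex.

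Next I would apply Sion's minimax theorem to the bilinear map $f(z,z^*) := z^*(z)$ on $\mathrm{co}(\{z_j\}) \times Y$; the required semicontinuity and quasi-convexity/concavity hypotheses are automatic from bilinearity and $w^*$-continuity of point evaluations, and $Y$ supplies the compact side. This yields
\[
\inf_{z \in \mathrm{co}(\{z_j\})} \sup_{z^* \in Y} z^*(z) \;=\; \sup_{z^* \in Y} \inf_{z \in \mathrm{co}(\{z_j\})} z^*(z) \;=\; \sup_{z^* \in Y} \inf_{j \in \N} z^*(z_j),
\]
the last equality because a linear functional attains its infimum on a convex hull at a generator. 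So the problem reduces to showing $\sup_{z^* \in Y} \inf_j z^*(z_j) \leq a$.

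For $z^* = \sum_{i=1}^k \lambda_i z^*_i \in \mathrm{co}(\tilde K)$, subadditivity of $\limsup$ combined with the hypothesis yields $\limsup_j z^*(z_j) \leq \sum_i \lambda_i \limsup_j z^*_i(z_j) \leq a$, and hence $\inf_j z^*(z_j) \leq a$. The hard part --- and the main obstacle of the proof --- is extending this bound from $\mathrm{co}(\tilde K)$ to its $w^*$-closure $Y$: the map $z^* \mapsto \inf_j z^*(z_j)$ is only $w^*$-upper semicontinuous (as an infimum of $w^*$-continuous affine functionals), which is the wrong direction for propagating upper bounds across $w^*$-limits. I would address this by decomposing $\tilde K = \bigcup_n K_n$ with $K_n := \{z^* \in \tilde K : z^*(z_j) \leq a \text{ for all } j \geq n\}$ ($w^*$-closed and increasing), and, given $z^* \in Y$ and $\eta > 0$, approximating $z^*$ on finitely many test coordinates $z_1,\dots,z_N$ by convex combinations drawn from a single $K_n$; such a finitary truncation then transfers the eventually-$a$ bound into an inequality $\inf_j z^*(z_j) \leq a + \eta$, and letting $\eta \to 0$ closes the argument.
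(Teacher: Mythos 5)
Your reduction is correct and clean up to the decisive point: the symmetrization $\tilde K = K\cup(-K)$, the passage from suprema over $\tilde K$ to suprema over $Y=\overline{\mathrm{co}}^{w^*}(\tilde K)$, and the application of Sion's minimax theorem (the compact side being $Y$, bilinearity supplying all semicontinuity and quasi-convexity hypotheses, and $\inf_{\mathrm{co}(\{z_j\})} z^*=\inf_j z^*(z_j)$ for linear functionals) are all valid. You also correctly isolate the true content of the theorem as the claim $\sup_{z^*\in Y}\inf_j z^*(z_j)\le a$, and correctly observe that upper semicontinuity of $z^*\mapsto \inf_j z^*(z_j)$ points the wrong way for passing the bound from $\mathrm{co}(\tilde K)$ to its $w^*$-closure.

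However, the proposed repair of that step is a non sequitur, and this is a genuine gap. Fix $z^*\in Y$, $\eta>0$, and finitely many test coordinates $z_1,\dots,z_N$; a $w^*$-approximation does give $u^*\in\mathrm{co}(K_n)$ for a \emph{single} $n$ (finitely many summands, $K_n$ increasing) with $|u^*(z_i)-z^*(z_i)|<\eta$ for $i\le N$. But the order of quantifiers defeats you: $n$ depends on the approximant $u^*$, hence on $N$, while the eventual bound $u^*(z_j)\le a$ is available only for $j\ge n$; to transfer anything to $z^*$ you need a test coordinate $j$ with $n\le j\le N$, and nothing prevents the minimal usable $n$ from exceeding every $N$ you try. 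If $n$ could be chosen uniformly over $\tilde K$, the whole theorem would be trivial (take $z_\epsilon=z_n$ itself, no convex combination needed); the entire difficulty of the statement is precisely this non-uniformity, i.e., an interchange of the limit in $j$ with the $w^*$-limit of convex combinations. A Baire-category patch (some $K_{n_0}$ has nonempty relative $w^*$-interior in the compact set $\tilde K$) does not close it either without a further exhaustion/iteration of Pt\'ak type. Known proofs of results at this level use genuinely combinatorial tools --- Simons' inequality, Pt\'ak's lemma, or the distance-to-Baire-one-functions machinery of Angosto--Cascales--Namioka --- and indeed the paper does not prove the statement at all: it quotes it as a particular case of \cite[Theorem~4.1]{ang-cas-nam}. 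So your minimax reformulation is a correct equivalent restatement, but the proof is missing exactly at its heart.
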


\begin{cor}\label{cor:ACN}
Let $Z$ be a Banach space, $(z_j)_{j\in \N}$ be a bounded sequence in~$Z$, $(K_n)_{n\in \N}$ be a sequence of $w^*$-compact subsets of~$Z^*$, 
and $a_n,\eta_n>0$ for all $n\in \N$. Suppose that
for each $n\in \N$ and for each $z^*\in K_n$ there is $j_{n,z^*}\in \N$ such that 
$|z^*(z_j)|\leq a_n$ for every $j\geq j_{n,z^*}$.
Then there is a convex block subsequence 
$(y_n)_{n\in \N}$ of~$(z_j)_{j\in \N}$ such that 
$$
	|z^{*}(y_n)| \leq a_n+\eta_n
	\quad
	\text{for every $z^{*}\in K_n$ and for every $n\in \N$.}
$$
\end{cor}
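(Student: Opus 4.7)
The plan is to obtain $(y_n)_{n\in\N}$ by iterating Theorem~\ref{theo:ACN} along successive tails of the sequence $(z_j)_{j\in\N}$. The essential observation that makes the iteration possible is that the hypothesis on each $K_n$ is about the \emph{eventual} behavior of the scalar sequences $(z^*(z_j))_{j\in\N}$: for any $N\in\N$, the tail $(z_j)_{j\geq N}$ is still bounded and still satisfies, for each $n\in\N$ and each $z^*\in K_n$, the inequality $|z^*(z_j)|\leq a_n$ for all $j\geq\max(j_{n,z^*},N)$. Consequently Theorem~\ref{theo:ACN} may be applied afresh to any such tail.

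The construction proceeds by induction on $n$. For $n=1$, apply Theorem~\ref{theo:ACN} to $(z_j)_{j\in\N}$ with $K=K_1$, $a=a_1$ and $\epsilon=\eta_1$ to obtain
$$y_1=\sum_{j\in I_1}\alpha^{(1)}_j z_j\in {\rm co}(\{z_j:j\in\N\}),$$
where $I_1\sub\N$ is finite, $\alpha^{(1)}_j\geq 0$, $\sum_{j\in I_1}\alpha^{(1)}_j=1$, and $|z^{*}(y_1)|\leq a_1+\eta_1$ for every $z^{*}\in K_1$. Assuming that $y_1,\dots,y_{n-1}$ have already been constructed, with successive finite index sets $I_1,\dots,I_{n-1}$ satisfying $\max(I_k)<\min(I_{k+1})$, set $N_n:=\max(I_{n-1})+1$ and apply Theorem~\ref{theo:ACN} to the bounded sequence $(z_j)_{j\geq N_n}$ with data $(K_n,a_n,\eta_n)$; this yields
$$y_n=\sum_{j\in I_n}\alpha^{(n)}_j z_j\in {\rm co}(\{z_j:j\geq N_n\}),$$
where $I_n\sub\{j\in\N:j\geq N_n\}$ is finite, $\alpha^{(n)}_j\geq 0$, $\sum_{j\in I_n}\alpha^{(n)}_j=1$, and $|z^{*}(y_n)|\leq a_n+\eta_n$ for every $z^{*}\in K_n$. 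By construction $\max(I_{n-1})<\min(I_n)$, so $(y_n)_{n\in\N}$ is a convex block subsequence of $(z_j)_{j\in\N}$ with the required property.

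There is no genuine obstacle: the statement is a routine iterative strengthening of Theorem~\ref{theo:ACN}, and the only point requiring care is the remark above that the hypothesis transfers to arbitrary tails, so that each step of the induction can legitimately invoke Theorem~\ref{theo:ACN} on a new tail while preserving the block structure.
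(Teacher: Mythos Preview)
Your proposal is correct and follows essentially the same approach as the paper: an inductive construction that applies Theorem~\ref{theo:ACN} to successive tails of $(z_j)_{j\in\N}$, using $K_n$, $a_n$, $\eta_n$ at the $n$th step, with the finite support sets chosen to be increasing so as to produce a convex block subsequence. Your version is slightly more explicit about why the hypothesis transfers to tails, but the argument is the same.
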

\begin{proof} We will construct the $y_n$'s inductively. 
For the first step, apply Theorem~\ref{theo:ACN} (with $K=K_1$) to get
$y_1\in {\rm co}(\{z_j:j\in \N\})$ such that $|z^*(y_1)| \leq a_1+\eta_1$ for every $z^{*}\in K_1$.
Write $y_1=\sum_{j\in J_1}a_j z_j$ for some
finite set $J_1 \sub \N$ and some collection of non-negative real numbers $(a_j)_{j\in J_1}$ with $\sum_{j\in J_1}a_j=1$.  
Pick $j_1\in \N$ with $j_1>\max J_1$. Now, Theorem~\ref{theo:ACN} applied to the tail $(z_j)_{j \geq j_1}$
and $K=K_2$ ensures the existence of $y_2\in {\rm co}(\{z_j:j \geq j_1\})$ such that 
$|z^*(y_2)| \leq a_2+\eta_2$ for every $z^{*}\in K_2$. By continuing in this way 
we get the required convex block subsequence $(y_n)_{n\in \N}$ of~$(z_j)_{j\in \N}$.
\end{proof}

Corollary~\ref{cor:ACN} will be used in the proof of the following quantitative version 
of Lohman's lifting~\cite{loh} (cf. \cite[2.4.a]{cas-gon}) as well as in Subsection~\ref{subsection:propertyK}.

\begin{pro}\label{pro:Lohman}
Let $X$ be a Banach space and $Y \sub X$ be a subspace not containing subspaces isomorphic to~$\ell_1$.
Let $q:X\to X/Y$ be the quotient operator, $M \sub X$ be a bounded set and $\epsilon\geq 0$. If $q(M)$ is $\epsilon$-weakly precompact,
then $M$ is $\epsilon$-weakly precompact. 
\end{pro}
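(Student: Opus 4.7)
The plan is to argue by contradiction, turning Lohman's classical lifting argument into a quantitative one by keeping careful track of constants. Suppose $M$ is not $\epsilon$-weakly precompact. Lemma~\ref{lem:epsilon-compact+} then yields some $\epsilon'>\epsilon$ for which $M$ fails to be $\epsilon'$-weakly precompact, and Theorem~\ref{theo:Behrends-KPS}(i) produces an $\ell_1$-sequence $(x_n)\sub M$ with constant $c:=\epsilon'/2 > \epsilon/2$. Since $q(M)$ is $\epsilon$-weakly precompact, I may pass to a subsequence with $\delta(q(x_n))\leq \epsilon$.

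The crucial move is to replace $(x_n)$ by the pair-differences $y_k:=x_{2k}-x_{2k+1}$. A direct computation shows $(y_k)$ is again an $\ell_1$-sequence, now with constant $2c>\epsilon$, so its $\ell_1$-constant strictly dominates the oscillation bound on the quotient. Moreover, identifying $B_{(X/Y)^*}$ with $B_{Y^\perp}\sub B_{X^*}$ through $q^*$, the inequality $\delta(q(x_n))\leq \epsilon$ gives
$$
\limsup_{k\to\infty}|\phi(q(y_k))|\leq \epsilon \quad\text{for every }\phi\in B_{(X/Y)^*}.
$$

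Fix $\eta>0$ small enough that $2c-\epsilon-3\eta>0$. Then Corollary~\ref{cor:ACN}, applied with $z_j=q(y_j)$, $K_n=B_{(X/Y)^*}$, $a_n=\epsilon+\eta$ and $\eta_n=\eta/2^n$, yields a convex block subsequence $(\tilde{y}_j)$ of~$(y_k)$ with $\|q(\tilde{y}_j)\|\leq \epsilon+2\eta$ for all~$j$. Convex blocking preserves the $\ell_1$-constant, so $(\tilde{y}_j)$ is still an $\ell_1$-sequence with constant~$2c$. The definition of the quotient norm now allows me to pick $u_j\in Y$ with $\|\tilde{y}_j-u_j\|\leq \epsilon+3\eta$, and then the triangle inequality forces
$$
\Big\|\sum_j b_j u_j\Big\|\geq (2c-\epsilon-3\eta)\sum_j |b_j|,
$$
exhibiting $(u_j)$ as an $\ell_1$-sequence in~$Y$ with positive constant, contradicting the hypothesis that $Y$ contains no isomorphic copy of~$\ell_1$.

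The main obstacle is precisely this margin arithmetic. Theorem~\ref{theo:Behrends-KPS}(i) only yields an $\ell_1$-constant of~$\epsilon/2$, which has no room against the $\epsilon$-oscillation surviving in the quotient; using $(x_n)$ directly would leave the lifting error too large to absorb. The twin tricks of first sharpening the $\ell_1$-constant from $\epsilon/2$ to $c>\epsilon/2$ via Lemma~\ref{lem:epsilon-compact+}, and then passing to pair-differences to double the constant to $2c>\epsilon$ while keeping the oscillation of the $q$-images below~$\epsilon$, are what open up the strictly positive gap $2c-\epsilon>0$ that powers the final contradiction.
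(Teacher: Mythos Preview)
Your proof is correct and follows essentially the same route as the paper's own argument: the paper likewise passes via Lemma~\ref{lem:epsilon-compact+} to some $\epsilon'>\epsilon$, extracts an $\ell_1$-sequence with constant $\epsilon'/2$ from~$M$ using Theorem~\ref{theo:Behrends-KPS}(i), doubles the constant to~$\epsilon'$ by taking pair-differences, applies Corollary~\ref{cor:ACN} (with $K_n=B_{(X/Y)^*}$) to obtain a convex block subsequence with small quotient norm, and then lifts to an $\ell_1$-sequence in~$Y$ with constant $\epsilon'-\epsilon-\eta>0$. The only cosmetic difference is your bookkeeping of the $\eta$'s (yielding a final constant $2c-\epsilon-3\eta$ rather than $\epsilon'-\epsilon-\eta$), which is immaterial.
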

\begin{proof}
Fix $\epsilon'>\epsilon$ (to apply Lemma~\ref{lem:epsilon-compact+}) and suppose that $M$ is not $\epsilon'$-weakly precompact. 
By Theorem~\ref{theo:Behrends-KPS}(i), there is an $\ell_1$-sequence with constant $\frac{\epsilon'}{2}$ contained in~$M$, 
say $(x_n)_{n\in \N}$. Since $q(M)$ is $\epsilon$-weakly precompact, by passing to a subsequence we can assume that $\delta(q(x_n))\leq \epsilon$. Define $z_m:=x_{2m+1}-x_{2m}$ for all $m\in \N$, so that
$(z_m)_{m\in \N}$ is an $\ell_1$-sequence with constant~$\epsilon'$. 

Fix any $0<\eta<\epsilon'-\epsilon$. Then
$$
	\sup_{\varphi\in B_{(X/Y)^*}}\limsup_{m\to \infty}|\varphi(q(z_{m}))| \leq \delta(q(x_{n})) < \epsilon+\eta.
$$
Hence we can apply Corollary~\ref{cor:ACN} to $(q(z_m))_{m\in \N}$ (with $K_n=B_{(X/Y)^*}$ for all $n\in \N$) 
to get a convex block subsequence $(\tilde{z}_k)_{k\in \N}$ of~$(z_m)_{m\in \N}$
such that 
$$
	\|q(\tilde{z}_k)\|_{X/Y}<\epsilon+\eta
	\quad \text{for every $k\in \N$.} 
$$	

Since $(z_m)_{m\in \N}$ is an $\ell_1$-sequence with constant~$\epsilon'$,
the same holds for $(\tilde{z}_k)_{k\in \N}$ (as it can be easily checked).
Choose a sequence $(y_k)_{k\in \N}$ in~$Y$ such that $\|\tilde{z}_k-y_k\|<\epsilon+\eta$ for all $k\in \N$. 
Then $(y_k)_{k\in \N}$ is an $\ell_1$-sequence (with constant~$\epsilon'-\epsilon-\eta)$,
which contradicts the fact that $Y$ contains no subspace isomorphic to~$\ell_1$.
\end{proof}

\begin{proof}[Proof of Theorem~\ref{theo:quotient-Mp}] Let $q:X\to X/Y$ be the quotient operator.
Suppose first that $X$ has property~$\mathfrak{KM}_w$ and let $\{M_{n,p}:n,p\in \Nat\}$ 
be a family of subsets of~$X$ satisfying the conditions of Definition~\ref{defi:KMw}.
By Lemma~\ref{lem:operator-compact}, each $q(M_{n,p})$ is $\frac{1}{p}$-weakly precompact.
Let $C \sub X/Y$ be a weakly precompact set. Choose a bounded set $L \sub X$ such that $q(L)=C$.
Then $L$ is weakly precompact by Lohman's lifting (i.e., Proposition~\ref{pro:Lohman} with $\epsilon=0$). Therefore, 
for every $p\in \N$ there is $n\in \N$ such that $L \sub M_{n,p}$,
and so $C \sub q(M_{n,p})$. This shows that $X/Y$ has property~$\mathfrak{KM}_w$, as witnessed
by the family $\{q(M_{n,p}):n,p\in \Nat\}$.

Conversely, suppose $X/Y$ has property~$\mathfrak{KM}_w$ and let $\{\tilde{M}_{n,p}:n,p\in \Nat\}$ 
be a family of subsets of~$X/Y$ satisfying the conditions of Definition~\ref{defi:KMw}. 
We can assume that $\tilde{M}_{n,p} \sub \tilde{M}_{n+1,p}$ for all $n,p\in \N$ (Proposition~\ref{pro:characterization}).
Take bounded sets $M_{n,p}\sub X$ in such a way that $q(M_{n,p})=\tilde{M}_{n,p}$
and $M_{n,p} \sub M_{n+1,p}$ for all $n,p\in \N$. By Proposition~\ref{pro:Lohman},
each $M_{n,p}$ is $\frac{1}{p}$-weakly precompact and so 
$$
	M'_{n,p}:=M_{n,p}+nB_Y \sub X
$$ 
is $\frac{1}{p}$-weakly precompact (Lemma~\ref{lem:sum}(ii)).

We claim that the family $\{M'_{n,p}:n,p\in \N\}$ witnesses that $X$ has property~$\mathfrak{KM}_w$. Indeed,
let $L \sub X$ be a weakly precompact set and fix $p\in \N$. Since $q(L)$ is weakly precompact in~$X/Y$, 
we have $q(L) \sub \tilde{M}_{n_0,p}=q(M_{n_0,p})$ for some $n_0\in \N$.
Now, if we choose $n\in \N$ large enough such that $n\geq n_0$ and $n\geq \|x\|_X$ for all $x\in L-M_{n_0,p}$,
then we have $L \sub M_{n,p} + n B_Y=M'_{n,p}$. 
\end{proof}

A similar argument using Lohman's lifting
and the fact that weak precompactness is preserved by taking absolutely convex hulls
(Subsection~\ref{subsection:aco}) yields the following result. We omit the details.

\begin{theo}\label{theo:quotientSWPG}
Let $X$ be a Banach space and $Y \sub X$ be a subspace not containing subspaces isomorphic to~$\ell_1$.
Then $X$ is SWPG if and only if $X/Y$ is SWPG.
\end{theo}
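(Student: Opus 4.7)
My plan is to mimic the proof of Theorem~\ref{theo:quotient-Mp}, using Lohman's lifting (Proposition~\ref{pro:Lohman} with $\epsilon=0$) together with Stegall's absolutely convex hull theorem (the case $\epsilon=0$ of Theorem~\ref{theo:Krein}). Write $q:X\to X/Y$ for the quotient operator. For the ``only if'' direction, assume $C_0\sub X$ is a weakly precompact generator for~$X$; then $q(C_0)$ is weakly precompact in~$X/Y$ by Lemma~\ref{lem:operator-compact}. Given a weakly precompact $C\sub X/Y$, I~lift it to a bounded $L\sub X$ with $q(L)=C$ by selecting preimages of almost minimal norm. Since $Y$ contains no subspace isomorphic to~$\ell_1$, Lohman's lifting makes $L$ weakly precompact, so the SWPG property of~$X$ gives $n\in\N$ with $L\sub nC_0+\epsilon B_X$; applying~$q$ yields $C\sub nq(C_0)+\epsilon B_{X/Y}$.

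For the ``if'' direction, the construction of a generator for~$X$ requires combining a lift of the generator for~$X/Y$ with enough of~$Y$ to absorb the residual quotient error. Let $\tilde{C}_0\sub X/Y$ witness SWPG of~$X/Y$, and lift it to a bounded $D_0\sub X$ with $q(D_0)=\tilde{C}_0$; Lohman's lifting again makes $D_0$ weakly precompact. Since $Y$ contains no subspace isomorphic to~$\ell_1$, Rosenthal's theorem ensures that $B_Y$ is weakly precompact, so $D_0\cup B_Y$ is weakly precompact as the union of two weakly precompact sets. Stegall's theorem then implies that $C_0:=\overline{{\rm aco}}(D_0\cup B_Y)$ is weakly precompact, and I~claim it is the desired generator for~$X$.

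To verify the claim, fix a weakly precompact $L\sub X$ and $\epsilon>0$, and set $R:=\sup_{x\in L}\|x\|$ and $R_0:=\sup_{d\in D_0}\|d\|$. The SWPG property of~$X/Y$ applied to~$q(L)$ gives $n\in\N$ with $q(L)\sub n\tilde{C}_0+(\epsilon/2)B_{X/Y}$. For each $x\in L$, choose $d\in D_0$ with $\|q(x)-nq(d)\|_{X/Y}\leq\epsilon/2$, then $y\in Y$ with $\|x-nd-y\|\leq\epsilon$; the triangle inequality forces $\|y\|\leq R+nR_0+\epsilon$. Absolute convexity of~$C_0$, together with $D_0,B_Y\sub C_0$, yields $nd+y\in NC_0$ for any integer $N\geq n+R+nR_0+\epsilon$, hence $L\sub NC_0+\epsilon B_X$. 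The step I~expect to be the main obstacle is precisely the lifting of~$\tilde{C}_0$: without the hypothesis on~$Y$, the lift $D_0$ need not be weakly precompact, and no amount of enlargement by~$B_Y$ in the absolutely convex hull would rescue the construction. The fact that~$N$ depends on~$L$ through~$R$ is a minor point, permitted by the SWPG definition.
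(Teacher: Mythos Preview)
Your proof is correct and matches the approach the paper outlines: the paper omits the details but states explicitly that the argument uses Lohman's lifting together with Stegall's theorem on absolutely convex hulls, which is precisely what you do. Your construction $C_0=\overline{{\rm aco}}(D_0\cup B_Y)$ is the natural SWPG analogue of the sets $M'_{n,p}=M_{n,p}+nB_Y$ used in the proof of Theorem~\ref{theo:quotient-Mp}, and the verification that it serves as a generator is carried out correctly.
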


\begin{question}\label{question:3space}
Let $X$ be a Banach space and $Y \sub X$ be a subspace.
\begin{enumerate}
\item[(i)] Does $X$ have property~$\mathfrak{KM}_w$ if both $Y$ and $X/Y$ have property~$\mathfrak{KM}_w$?
\item[(ii)] Is $X$ SWPG if both $Y$ and $X/Y$ are SWPG?
\end{enumerate}
\end{question}
 
We stress that it is also unknown whether the property of being SWCG is a three-space property.  
There is a result analogous to Theorem~\ref{theo:quotientSWPG} for the property of being SWCG when $Y$ is reflexive,
see \cite[Theorem~2.7]{sch-whe}.

\subsection{Unconditional sums} \label{subsection:sums}

In this subsection we discuss the stability of property~$\mathfrak{KM}_w$ under unconditional sums.

Given a sequence $(X_m)_{m\in \N}$ of Banach spaces, we denote by $\left(\bigoplus_{m\in \N}X_m\right)_{\ell_\infty}$
its $\ell_\infty$-sum. If $E$ is a Banach space with a normalized $1$-unconditional basis~$(e_m)_{m\in \N}$,
we write $\left(\bigoplus_{m\in \N}X_m\right)_E$ to denote the $E$-sum of $(X_m)_{m\in \N}$, that is, the Banach space
$$
	\left(\bigoplus_{m\in \N}X_m\right)_E:=
	\left\{
	(x_m)_{m\in \N}\in \prod_{m\in \N}X_m: \, \sum_{m\in \N}\|x_m\| \, e_m \text{ converges in $E$} 
	\right\}
$$
equipped with the norm
$$
	\left\|(x_m)_{m\in \N}\right\|_{(\bigoplus_{m\in \N}X_m)_E}
	:=
	\left\|\sum_{m\in \N}\|x_m\|\, e_m\right\|_E.
$$
The simplest examples are obtained when $E$ is~$c_0$ or $\ell_p$ for $1\leq p<\infty$ (equipped with its usual basis).

The fact that the $\ell_1$-sum of countably many SWPG spaces is SWPG 
was stated without proof in \cite[Example~4.1(c)]{kun-sch}. 
We include a proof below (Proposition~\ref{pro:l1-sums-SWPG}) for the sake of completeness and also because a similar argument yields the analogue
for Banach spaces having property~$\mathfrak{KM}_w$ (Proposition~\ref{pro:l1-sums}). The key is condition~($\Delta$) of the following folk lemma. 

\begin{lem}\label{lem:truncation}
Let $(X_m)_{m\in \N}$ be a sequence of Banach spaces and let us write $X:=(\bigoplus_{m\in \N}X_m)_{\ell_1}$. 
Let $\pi_m:X\to X_m$ be the canonical projection for all $m\in \N$.
Then a set $C \sub X$ is weakly precompact if and only if $\pi_m(C)$ is weakly precompact in~$X_m$ for all $m\in \N$ and
the following condition holds: 
\begin{enumerate}
\item[($\Delta$)] for every $\delta>0$ there is $m_0\in \N$ such that
$$	
	\sum_{m>m_0} \|\pi_m(x)\|_{X_m} \leq \delta
	\quad
	\text{for all $x\in C$.}
$$
\end{enumerate}
\end{lem}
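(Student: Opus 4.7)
The plan is to prove both directions separately, using the fact that $X^* = \bigl(\bigoplus_{m\in\N}X_m^*\bigr)_{\ell_\infty}$ under the natural duality.

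For the ``only if'' direction, continuity of each $\pi_m$ combined with the fact that the continuous linear image of a weakly precompact set is weakly precompact (an immediate consequence of Lemma~\ref{lem:operator-compact} with $\epsilon=0$, or directly from the definition) handles the first condition. The real content lies in showing that $(\Delta)$ must hold. I will argue by contrapositive: assuming $(\Delta)$ fails, I will build an $\ell_1$-sequence in $C$, contradicting Rosenthal's $\ell_1$-theorem. If $(\Delta)$ fails, there is $\delta>0$ so that for every $m_0\in\N$ some $x\in C$ has $\sum_{m>m_0}\|\pi_m(x)\|_{X_m}>\delta$. Iterating and exploiting the absolute convergence of each series $\sum_m\|\pi_m(x)\|_{X_m}$, I will build $x_n\in C$ and integers $m_0<m_1<m_2<\cdots$ such that each $x_n$ has tail $\sum_{m>m_n}\|\pi_m(x_n)\|_{X_m}$ smaller than any prescribed small number (say $\delta/8$) while $\sum_{m>m_{n-1}}\|\pi_m(x_n)\|_{X_m}>\delta$; thus $x_n$ is, up to an error of at most $\delta/8$, supported in the ``block'' of coordinates $(m_{n-1},m_n]$ with norm at least $7\delta/8$. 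Choosing norming functionals on each block and extending them by zero to elements of $B_{X^*}$ (here I use the $\ell_\infty$-duality) shows that these block parts form an $\ell_1$-sequence in $X$, and the standard perturbation stability of $\ell_1$-sequences transfers this to $(x_n)_{n\in\N}$ itself.

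For the ``if'' direction, given a sequence $(x_n)_{n\in\N}$ in $C$, I will extract weakly Cauchy coordinate sequences by diagonalization: since each $\pi_m(C)$ is weakly precompact, a standard diagonal argument produces a subsequence (still denoted $(x_n)_{n\in\N}$) such that $(\pi_m(x_n))_{n\in\N}$ is weakly Cauchy in $X_m$ for every $m\in\N$. Then, given $x^*=(x_m^*)_{m\in\N}\in X^*$ with $\|x^*\|=\sup_m\|x_m^*\|_{X_m^*}$ and $\eta>0$, I invoke $(\Delta)$ with $\delta:=\eta/(3\|x^*\|+1)$ to obtain $m_0$ uniformly controlling the tails of all $x\in C$, reducing the problem to showing that $\sum_{m\le m_0}x_m^*(\pi_m(x_n))$ is Cauchy in $n$, which is immediate since it is a finite sum of Cauchy scalar sequences. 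A three-$\epsilon$ estimate then gives that $(x^*(x_n))_{n\in\N}$ is Cauchy, so $(x_n)_{n\in\N}$ is weakly Cauchy.

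The main obstacle is the construction in the ``only if'' direction: carefully recording the tail estimates so that, after discarding a small perturbation, the remaining ``block'' vectors form a genuine $\ell_1$-basic sequence with a good constant, and then transferring this back to $(x_n)_{n\in\N}$. The bookkeeping is a little delicate because we must simultaneously ensure a lower bound on each block norm and a summable upper bound on the perturbation errors; choosing geometric thresholds like $\delta/2^{n+2}$ at each step is the standard way to make everything work.
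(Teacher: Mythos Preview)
Your ``if'' direction is correct and, in fact, more explicit than the paper's one-line appeal to Lemma~\ref{lem:sum}(ii) and the duality $X^*=\bigl(\bigoplus_m X_m^*\bigr)_{\ell_\infty}$.

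The gap is in the ``only if'' direction. Your gliding-hump construction produces $x_n\in C$ and $m_0<m_1<\cdots$ with
\[
\sum_{m>m_{n-1}}\|\pi_m(x_n)\|>\delta
\quad\text{and}\quad
\sum_{m>m_n}\|\pi_m(x_n)\|<\delta/2^{n+2},
\]
and you let $y_n$ be the restriction of $x_n$ to the block $(m_{n-1},m_n]$. It is true that the $y_n$ have disjoint supports and $\|y_n\|>7\delta/8$, so $(y_n)$ is an $\ell_1$-sequence. But the assertion that ``$x_n$ is, up to an error of at most $\delta/8$, supported in $(m_{n-1},m_n]$'' is false: the head $h_n:=x_n|_{[1,m_{n-1}]}$ is completely uncontrolled and can have norm as large as $\|x_n\|$. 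Thus $\|x_n-y_n\|=\|h_n\|+\|x_n|_{(m_n,\infty)}\|$ is \emph{not} small, and the principle of small perturbations does not transfer the $\ell_1$-property from $(y_n)$ to $(x_n)$. Writing out the lower bound for $\bigl\|\sum a_n x_n\bigr\|$ block by block, the head contributions produce a term of size $\sum_n|a_n|\,\|h_n\|$, which can swamp the gain $\tfrac{7\delta}{8}\sum_n|a_n|$ from the $y_n$'s.

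The paper avoids this obstacle by a different route: it quotes the (easier) fact that $(\Delta)$ holds for every \emph{relatively weakly compact} subset of~$X$, and then invokes Lemma~\ref{lem:SW} to upgrade this to weakly precompact sets. The mechanism hidden in Lemma~\ref{lem:SW} is precisely what repairs your argument: once you pass to a weakly Cauchy subsequence of $(x_n)$, the differences $d_l:=x_{n_{2l}}-x_{n_{2l-1}}$ form a weakly null (hence relatively weakly compact) sequence which still carries humps of mass $\geq 3\delta/4$ on blocks drifting to infinity, contradicting $(\Delta)$ for relatively weakly compact sets. If you want to keep your direct approach, this ``difference trick'' is the missing ingredient; it cannot be replaced by a naive perturbation argument.
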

\begin{proof} The ``if'' part follows easily from Lemma~\ref{lem:sum}(ii) and the fact 
that $X^*$ can be identified with $\left(\bigoplus_{m\in \N}X^*_m\right)_{\ell_\infty}$,
the duality being
$$
	\big\langle (x_m)_{m\in \N},(x_m^*)_{m\in \N}\big\rangle=
	\sum_{m\in \N}\langle x_m,x^*_m\rangle
$$
for every $(x_m)_{m\in \N}\in X$ and for every $(x^*_m)_{m\in \N} \in X^*$.

To prove the ``only if'' part, suppose that $C \sub X$ is weakly precompact. 
Clearly, $\pi_m(C)$ is weakly precompact in~$X_m$ for all $m\in \N$. 
For each $n\in \N$ and for each $p\in \N$, we define
$$
	M_{n,p}:=\{x\in X: \, \pi_m(x)=0 \text{ for all $m>n$}\}+\frac{1}{2p}B_X.
$$
Since~($\Delta$) holds for any relatively weakly compact subset of~$X$ (see, e.g., \cite[Lemma~7.2(ii)]{kac-alt}), 
the family $\{M_{n,p}:n,p\in \N\}$ satisfies the conditions of Lemma~\ref{lem:SW}. Fix $\delta>0$ and choose $p\in \N$ such that 
$\frac{1}{p}\leq \delta$. By Lemma~\ref{lem:SW}, there is $m_0\in \N$ such that $C \sub 2M_{m_0,p}$,
which implies that $\sum_{m>m_0} \|\pi_m(x)\|_{X_m} \leq \delta$ for all $x\in C$.
\end{proof}

\begin{pro}\label{pro:l1-sums-SWPG}
Let $(X_m)_{m\in \N}$ be a sequence of SWPG Banach spaces. Then 
$(\bigoplus_{m\in \N}X_m)_{\ell_1}$ is SWPG.
\end{pro}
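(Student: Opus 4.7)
The plan is to build an appropriate weakly precompact generator of $(\bigoplus_{m\in\N}X_m)_{\ell_1}$ by combining, in a summable geometric fashion, generators of each factor, and then to use the truncation criterion of Lemma~\ref{lem:truncation} to both verify weak precompactness and carry out the approximation required by the definition of SWPG.

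For each $m\in\N$, pick a weakly precompact set $C_0^m\sub X_m$ witnessing that $X_m$ is SWPG. After rescaling we may assume $C_0^m\sub B_{X_m}$, and by replacing $C_0^m$ with $\overline{\mathrm{aco}}(C_0^m)$ (which is still weakly precompact by Stegall's theorem, or its quantitative refinement Theorem~\ref{theo:Krein}) we may further assume that each $C_0^m$ is absolutely convex and closed, so in particular $0\in C_0^m$. Write $X:=(\bigoplus_{m\in\N}X_m)_{\ell_1}$ and $\pi_m:X\to X_m$ for the canonical projections. Define the candidate generator
$$
C_0:=\Big\{(x_m)_{m\in\N}\in X:\ x_m\in 2^{-m}C_0^m\text{ for all }m\in\N\Big\}.
$$
Since $\|x_m\|\leq 2^{-m}$, every such sequence lies in $X$.

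First I would verify that $C_0$ is weakly precompact using Lemma~\ref{lem:truncation}. The coordinate projection $\pi_m(C_0)=2^{-m}C_0^m$ is weakly precompact in~$X_m$, and for every $\delta>0$ and $m_0$ with $2^{-m_0}\leq\delta$ one has $\sum_{m>m_0}\|\pi_m(x)\|\leq \sum_{m>m_0}2^{-m}\leq\delta$ for all $x\in C_0$, so condition~$(\Delta)$ holds.

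Next I would establish the SWPG approximation property. Let $C\sub X$ be weakly precompact and fix $\epsilon>0$. By Lemma~\ref{lem:truncation}, condition~$(\Delta)$ yields $m_0\in\N$ with $\sum_{m>m_0}\|\pi_m(x)\|\leq \epsilon/2$ for every $x\in C$, and each $\pi_m(C)\sub X_m$ is weakly precompact. Using that each $X_m$ is SWPG via~$C_0^m$, for each $m\leq m_0$ pick $n_m\in\N$ with $\pi_m(C)\sub n_m C_0^m+\tfrac{\epsilon}{2m_0}B_{X_m}$. Set $n:=\max\{n_m 2^m:m\leq m_0\}$. Given $x\in C$, for each $m\leq m_0$ write $\pi_m(x)=n_m c_m+r_m$ with $c_m\in C_0^m$ and $\|r_m\|\leq \epsilon/(2m_0)$, and define $y\in X$ by $y_m:=n_m c_m$ for $m\leq m_0$ and $y_m:=0$ otherwise. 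Then $\|x-y\|_X\leq \epsilon/2+\epsilon/2=\epsilon$. Because $C_0^m$ is absolutely convex with $0\in C_0^m$ and $n_m 2^m/n\leq 1$, we get $y_m=n\cdot 2^{-m}\bigl(\tfrac{n_m 2^m}{n}c_m\bigr)\in n\cdot 2^{-m}C_0^m$ for $m\leq m_0$, while $y_m=0\in n\cdot 2^{-m}C_0^m$ for $m>m_0$. Hence $y\in nC_0$ and $C\sub nC_0+\epsilon B_X$, proving that $X$ is SWPG.

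The only subtle point, and the step I would focus most on presenting cleanly, is the simultaneous choice of~$n$: it must simultaneously absorb all the scaling factors $n_m 2^m$ for $m\leq m_0$. This is exactly what forces the two preliminary normalizations (rescaling into the unit ball and passing to absolutely convex hulls), since these guarantee that enlarging $n$ only shrinks the coefficients $n_m 2^m/n$ into the interval $[0,1]$, where the absolute convexity of $C_0^m$ keeps us inside the generator. Every other ingredient is routine once Lemma~\ref{lem:truncation} is invoked.
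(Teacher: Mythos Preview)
Your proof is correct and follows essentially the same approach as the paper: both build the generator as the product $\prod_m 2^{-m}C_0^m$ (the paper simply absorbs the factor $2^{-m}$ into the choice of $C_m$ from the outset), verify its weak precompactness via Lemma~\ref{lem:truncation}, and then use the truncation condition~($\Delta$) together with the balancedness of the generators to produce a single~$n$ that works simultaneously on the first $m_0$ coordinates. The only cosmetic difference is that the paper takes $n=\max\{n_1,\dots,n_{m_0}\}$ (the $2^m$ already being hidden in~$C_m$), whereas you keep it visible and set $n=\max\{n_m2^m:m\leq m_0\}$.
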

\begin{proof} Write $X:=(\bigoplus_{m\in \N}X_m)_{\ell_1}$.
For each $m\in \N$ we fix a weakly precompact set $C_m \sub \frac{1}{2^m}B_{X_m}$ with the following property: for every weakly precompact
set $H \sub X_m$ and for every $\epsilon>0$ there is $n\in \N$ such that $H \sub nC_m+\epsilon B_{X_m}$. 
Since weak precompactness is preserved by taking absolutely convex hulls (see Subsection~\ref{subsection:aco}),
we can assume that each $C_m$ is absolutely convex.
Define
$$
	W:= \prod_{m\in \N}C_m \sub X,
$$
so that $W$ is weakly precompact in~$X$ (by Lemma~\ref{lem:truncation}). Now, take any weakly precompact set $C \sub X$
and $\epsilon>0$. Lemma~\ref{lem:truncation} applied to~$C$ ensures the existence of $m_0\in \N$ such that
\begin{equation}\label{eqn:truncation1}	
	\sum_{m>m_0} \|\pi_m(x)\|_{X_m} \leq \frac{\epsilon}{2}
	\quad
	\text{for all $x\in C$.}
\end{equation}
For each $m\in \{1,\dots,m_0\}$ the set $\pi_m(C) \sub X_m$ is weakly precompact and so there is $n_m\in \N$ such that
$\pi_m(C) \sub n_m C_m + \frac{\epsilon}{2m_0}B_{X_m}$. If we write $n:=\max\{n_1,\dots,n_{m_0}\}$, then 
\begin{equation}\label{eqn:coordinate}
	\pi_m(C) \sub nC_m + \frac{\epsilon}{2m_0}B_{X_m}
	\quad\text{for every $m\in \{1,\dots,m_0\}$}
\end{equation}
(because each $C_m$ is balanced). From~\eqref{eqn:truncation1} and~\eqref{eqn:coordinate} we get $C \sub nW+\epsilon B_X$.
This shows that $X$ is SWPG. 
\end{proof}

\begin{pro}\label{pro:l1-sums}
Let $(X_m)_{m\in \N}$ be a sequence of Banach spaces having property~$\mathfrak{KM}_w$. Then 
$(\bigoplus_{m\in \N}X_m)_{\ell_1}$ has property~$\mathfrak{KM}_w$.
\end{pro}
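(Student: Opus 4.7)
The plan is to adapt the proof of Proposition~\ref{pro:l1-sums-SWPG} to the parametrized family setting of property~$\mathfrak{KM}_w$. Write $X := \left(\bigoplus_{m\in \N}X_m\right)_{\ell_1}$ and let $\pi_m : X \to X_m$ be the canonical projections. For each $m\in \N$, using Proposition~\ref{pro:characterization}, I would fix an absolutely convex, increasing (in~$n$) family $\{M^m_{n,p} : n, p \in \N\}$ of subsets of~$X_m$ witnessing property~$\mathfrak{KM}_w$ for~$X_m$. The goal is to assemble a countable family $\{M_{n,p}:n,p\in \N\}$ of subsets of~$X$ witnessing property~$\mathfrak{KM}_w$ for~$X$, with indexing parameters that encode both a cutoff coordinate~$m_0$ and a choice of index~$n_m$ in each factor $X_m$ with $m\leq m_0$.

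Concretely, for each $p\in \N$ and each tuple $\tau=(m_0,n_1,\ldots,n_{m_0})\in \bigcup_{k\in \N}\N^{k+1}$, I would define
$$
M_{\tau,p}:=\Big\{(x_m)_{m\in \N}\in X : \, x_m\in M^m_{n_m,\,2^{m+2}p} \text{ for }m\leq m_0,\ \sum_{m>m_0}\|x_m\|_{X_m}\leq \tfrac{1}{4p}\Big\}.
$$
Since the index set of tuples is countable, re-enumerating this as $\{M_{n,p}:n,p\in \N\}$ will finish the proof, provided the two defining conditions of Definition~\ref{defi:KMw} are verified.

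The verification of condition~(ii) is the easy part: given a weakly precompact $C\sub X$ and $p\in \N$, Lemma~\ref{lem:truncation} furnishes $m_0\in \N$ controlling the tail of every element of~$C$ by~$\frac{1}{4p}$, and for each $m\leq m_0$ the weak precompactness of $\pi_m(C)\sub X_m$ yields some $n_m$ with $\pi_m(C)\sub M^m_{n_m,2^{m+2}p}$. Then $C\sub M_{\tau,p}$ for $\tau=(m_0,n_1,\ldots,n_{m_0})$.

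The main obstacle is the quantitative verification of condition~(i): showing that $M_{\tau,p}$ is $\frac{1}{p}$-weakly precompact. Using the duality $X^*=\left(\bigoplus_{m\in \N}X^*_m\right)_{\ell_\infty}$, for a sequence $(y^{(k)})_{k\in \N}$ in~$M_{\tau,p}$ I would iteratively pass to subsequences so that, for each $m\leq m_0$, $\delta(\pi_m(y^{(k)}))\leq \frac{1}{2^{m+2}p}$ in~$X_m$; this uses the $\frac{1}{2^{m+2}p}$-weak precompactness of~$M^m_{n_m,2^{m+2}p}$ only finitely often, so a diagonal extraction suffices. For any $\varphi=(\varphi_m)\in B_{X^*}$ one then estimates
$$
\inf_{J}\sup_{j,j'\geq J}|\varphi(y^{(k_j)}-y^{(k_{j'})})|\leq \sum_{m\leq m_0}\delta(\pi_m(y^{(k_j)}))+\sup_{j,j'}\sum_{m>m_0}\big(\|\pi_m(y^{(k_j)})\|_{X_m}+\|\pi_m(y^{(k_{j'})})\|_{X_m}\big),
$$
which is at most $\sum_{m\leq m_0}\frac{1}{2^{m+2}p}+\frac{1}{2p}\leq \frac{1}{4p}+\frac{1}{2p}<\frac{1}{p}$. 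The decisive point is the geometric weighting $2^{-m}$ in the parameters $2^{m+2}p$, chosen precisely so that the finite sum of individual tolerances stays uniformly bounded (by $\frac{1}{4p}$) independently of~$m_0$; a uniform choice of tolerance would contribute an $m_0$-dependent factor and destroy the estimate.
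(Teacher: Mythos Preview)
Your proof is correct, and it follows the same overall strategy as the paper (truncation via Lemma~\ref{lem:truncation} for the covering condition, summing coordinate tolerances for the quantitative condition), but the execution differs in two respects worth noting.

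First, your indexing is heavier than necessary. You encode a tuple $(m_0,n_1,\dots,n_{m_0})$ of variable length, whereas the paper exploits the monotonicity $M^m_{n,p}\subseteq M^m_{n+1,p}$ (which you assumed but did not use) to replace $(n_1,\dots,n_{m_0})$ by the single integer $k=\max\{n_1,\dots,n_{m_0}\}$; a bijection $\N\to\N^2$ then suffices. Second, your geometric weighting $2^{m+2}p$ is an unneeded refinement: the paper simply takes the \emph{uniform} tolerance $\tfrac{1}{2pm_0}$ in each of the $m_0$ coordinates, so that the sum is exactly $\tfrac{1}{2p}$. Your closing remark that a uniform tolerance ``would contribute an $m_0$-dependent factor and destroy the estimate'' is therefore not quite right---it works perfectly well provided the tolerance itself scales with~$m_0$, which is harmless since $m_0$ is part of the index. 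The paper then packages the sets as a Minkowski sum $\sum_{m=1}^{m_0}\tilde M^m_{k,2pm_0}+\tfrac{1}{4p}B_X$ and invokes Lemma~\ref{lem:sum}(ii) directly, avoiding your hands-on subsequence extraction (which, incidentally, involves only finitely many extractions for fixed~$\tau$, so no genuine diagonalization is needed).
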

\begin{proof} Write $X:=(\bigoplus_{m\in \N}X_m)_{\ell_1}$.
Fix $m\in \N$. Let us take a family $\{M^m_{n,p}:n,p\in \N\}$ of subsets of~$X_m$ satisfying the conditions of Definition~\ref{defi:KMw},
with the additional property that $M^m_{n,p} \sub M^m_{n+1,p}$ for all $n,p\in \N$ (Proposition~\ref{pro:characterization}).
If we denote by $j_m:X_m\to X$ the canonical embedding, then each $\tilde{M}^m_{n,p}:=j_m(M^m_{n,p})$ is $\frac{1}{p}$-weakly precompact in~$X$
(apply Lemma~\ref{lem:operator-compact}). 

Let $\phi:\N \to \N \times \N$ be a bijection and write $\phi(n)=(\phi_1(n),\phi_2(n))$ for all $n\in \N$.
For each $n\in \N$ and for each $p\in\N$, the set
$$
	M_{n,p}:=\sum_{m=1}^{\phi_2(n)} \tilde{M}^m_{\phi_1(n),2p\phi_2(n)}+\frac{1}{4p}B_X
$$
is $\frac{1}{p}$-weakly precompact in~$X$ (apply Lemma~\ref{lem:sum}(ii)). 
Fix a weakly precompact set $C \sub X$ and take any $p\in \N$. By Lemma~\ref{lem:truncation},
there is $m_0\in \N$ such that 
\begin{equation}\label{eqn:truncation}
	\sum_{m>m_0} \|\pi_m(x)\|_{X_m} \leq \frac{1}{4p}
	\quad
	\text{for all $x\in C$,}
\end{equation}
where $\pi_m:X\to X_m$ is the canonical projection. 
For each $m\in\{1,\dots,m_0\}$ the set $\pi_m(C)$ is weakly precompact,
hence $\pi_m(C) \sub M^m_{k_m,2pm_0}$ for some $k_m\in \N$. Write $k:=\max\{k_1,\dots,k_{m_0}\}$,
so that $\pi_m(C) \sub M^m_{k,2pm_0}$ for every $m\in\{1,\dots,m_0\}$.
Take $n\in \N$ such that $\phi(n)=(\phi_1(n),\phi_2(n))=(k,m_0)$. Then
$$
	C \stackrel{\eqref{eqn:truncation}}{\sub} \sum_{m=1}^{m_0} j_m(\pi_m(C)) + \frac{1}{4p}B_X 
	\sub \sum_{m=1}^{m_0} \tilde{M}^m_{k,2pm_0} + \frac{1}{4p}B_X = M_{n,p}.
$$
This shows that the family $\{M_{n,p}:n,p\in \N\}$ fulfills the conditions of Definition~\ref{defi:KMw}
and so $X$ has property~$\mathfrak{KM}_w$. 
\end{proof}

The situation changes for $\ell_p$-sums when $1<p<\infty$. Indeed, if $(X_m)_{m\in\N}$ is a sequence of Banach spaces
and $(\bigoplus_{m\in \N}X_m)_{\ell_p}$ is SWPG (or just a subspace of a SWPG space), then $X_m$ contains 
no subspace isomorphic to~$\ell_1$ for all but finitely many $m\in \N$, see \cite[Theorem~4.5]{kun-sch}
(resp., \cite[Theorem~2.6]{laj-rod-2}). In particular, for $1<p<\infty$ the space $\ell_p(\ell_1)$ does not 
embed isomorphically into a SWPG space. Theorem~\ref{theo:E-sequence} below uses similar ideas to extend
those results to property~$\mathfrak{KM}_w$ and more general unconditional sums.

The following well-known lemma provides a useful representation for the dual of an unconditional sum.

\begin{lem}\label{lem:E-dual}
Let $(X_m)_{m\in \N}$ be a sequence of Banach spaces, $E$ be a Banach space with a normalized $1$-unconditional basis~$(e_m)_{m\in \N}$, 
and let us write $X:=\left(\bigoplus_{m\in \N}X_m\right)_{E}$ to denote the corresponding $E$-sum. Let
$(e_m^*)_{m\in\N}$ be the sequence in~$E^*$ of biorthogonal functionals
associated with~$(e_m)_{m\in \N}$. Then:
\begin{enumerate}
\item[(i)] For every ${\bf x^*}=(x_m^*)_{m\in \N}\in \prod_{m\in \N}X_m^*$ satisfying
$$
	|||{\bf x^*}|||:=\sup_{M\in \N}\left\|\sum_{m=1}^M \|x_m^*\| \, e_m^*\right\|_{E^*}<\infty
$$
we can define $\varphi_{{\bf x^*}} \in X^*$ by 
$$
	\varphi_{{\bf x^*}}((x_m)_{m\in \N}):=\sum_{m\in\N}x_m^*(x_m)
	\quad
	\text{ for all $(x_m)_{m\in \N}\in X$.}
$$	
Moreover, $\|\varphi_{{\bf x^*}}\|_{X^*}=|||{\bf x^*}|||$.
\item[(ii)] For every $\varphi\in X^*$ there is ${\bf x^*}\in \prod_{m\in \N}X_m^*$ with $|||{\bf x^*}|||<\infty$
such that $\varphi=\varphi_{{\bf x^*}}$.
\end{enumerate}
\end{lem}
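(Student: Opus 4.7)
The plan is to prove~(i) via two matching estimates obtained from the $E$-$E^*$ duality applied to the scalar sequences $(\|x_m\|)_{m\in\N}$ and $(\|x_m^*\|)_{m\in\N}$, and then to reduce~(ii) to~(i) by extracting the coordinates of~$\varphi$ through the canonical embeddings $j_m\colon X_m\to X$. For the bound $\|\varphi_{{\bf x^*}}\|_{X^*}\leq |||{\bf x^*}|||$, I would start from the elementary pointwise estimate
\[
\sum_{m=1}^M |x_m^*(x_m)|\leq \sum_{m=1}^M \|x_m^*\|\,\|x_m\|= \Bigl\langle \sum_{m=1}^M \|x_m\|\,e_m,\;\sum_{m=1}^M \|x_m^*\|\,e_m^*\Bigr\rangle,
\]
which the $E$-$E^*$ duality bounds by $\bigl\|\sum_{m=1}^M \|x_m\|\,e_m\bigr\|_E\cdot |||{\bf x^*}|||$. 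By $1$-unconditionality of~$(e_m)$, truncation does not increase the $E$-norm, so the first factor is at most $\|(x_m)\|_X$. Letting $M\to\infty$ gives both absolute convergence of $\sum_m x_m^*(x_m)$ and $\|\varphi_{{\bf x^*}}\|_{X^*}\leq|||{\bf x^*}|||$.

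For the matching lower bound I would use $1$-unconditionality again to reduce the dual norm to a supremum over non-negative, finitely supported competitors,
\[
\Bigl\|\sum_{m=1}^M \|x_m^*\|\,e_m^*\Bigr\|_{E^*}=\sup\Bigl\{\sum_{m=1}^M b_m\|x_m^*\|:\, b_m\geq 0,\;\Bigl\|\sum_{m=1}^M b_m e_m\Bigr\|_E\leq 1\Bigr\}
\]
(signs are absorbed and tails truncated without increasing the $E$-norm). Given $\epsilon>0$ and a near-optimal $(b_m)$, I would pick $y_m\in B_{X_m}$ with $x_m^*(y_m)\geq (1-\epsilon)\|x_m^*\|$ and set $x_m:=b_m y_m$ for $m\leq M$ and $x_m:=0$ otherwise, so that $\|(x_m)\|_X\leq\bigl\|\sum b_m e_m\bigr\|_E\leq 1$ (again by $1$-unconditionality, since $\|y_m\|\leq 1$) and $\varphi_{{\bf x^*}}((x_m))\geq (1-\epsilon)\sum b_m\|x_m^*\|$; sending $\epsilon\to 0$ and $M\to\infty$ yields $\|\varphi_{{\bf x^*}}\|_{X^*}\geq|||{\bf x^*}|||$.

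For~(ii), given $\varphi\in X^*$, I would set $x_m^*:=\varphi\circ j_m\in X_m^*$, where $j_m\colon X_m\to X$ is the canonical embedding. On finitely supported $(x_m)\in X$, linearity already gives $\varphi((x_m))=\sum_m x_m^*(x_m)$, and running the construction of the previous paragraph with $\varphi$ in place of $\varphi_{{\bf x^*}}$ yields $\bigl\|\sum_{m=1}^M \|x_m^*\|\,e_m^*\bigr\|_{E^*}\leq\|\varphi\|$ for every~$M$, hence $|||{\bf x^*}|||\leq\|\varphi\|<\infty$ and part~(i) applies to~${\bf x^*}$. Finally, since $(e_m)$ is a Schauder basis of~$E$, the truncations $(x_1,\dots,x_M,0,\dots)$ converge to $(x_m)_{m\in \N}$ in~$X$ (the $E$-norm of the tail $\sum_{m>M}\|x_m\|\,e_m$ tends to~$0$), so continuity of both $\varphi$ and $\varphi_{{\bf x^*}}$ together with their agreement on finitely supported sequences forces $\varphi=\varphi_{{\bf x^*}}$. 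The only delicate step I anticipate is the reduction to non-negative, finitely supported competitors in the $E^*$-norm formula, where $1$-unconditionality of~$(e_m)$ is used essentially; everything else is routine bookkeeping combining $E$-$E^*$ duality with the Schauder basis property.
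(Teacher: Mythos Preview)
Your argument is correct. The paper does not actually prove this lemma: it is introduced as a ``well-known'' representation of the dual of an unconditional sum and stated without proof, so there is no approach in the paper to compare yours against. Your verification is the standard one and all the ingredients are in place: the upper bound $\|\varphi_{{\bf x^*}}\|\le|||{\bf x^*}|||$ via the $E$--$E^*$ duality together with the fact that truncation and multiplication by contractions $(\theta_m)$ with $|\theta_m|\le 1$ do not increase the $E$-norm (both consequences of $1$-unconditionality), the matching lower bound via the reduction to non-negative finitely supported competitors, and in part~(ii) the density of finitely supported sequences in~$X$ (from the Schauder basis property of~$(e_m)$) to pass from agreement on those to $\varphi=\varphi_{{\bf x^*}}$. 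The step you flag as delicate---the sup formula for $\bigl\|\sum_{m=1}^M \|x_m^*\|e_m^*\bigr\|_{E^*}$ over non-negative $(b_m)$---is indeed the only place where $1$-unconditionality is essential, and your justification is right: for $u=\sum_m a_m e_m\in B_E$ one replaces $(a_m)_{m\le M}$ by $(|a_m|)_{m\le M}$ without increasing the $E$-norm, which can only increase the pairing with $\sum_{m=1}^M\|x_m^*\|e_m^*$.
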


\begin{lem}\label{lem:E-weaklynull}
Let $(X_m)_{m\in \N}$ be a sequence of Banach spaces and $E$ be a Banach space with a normalized $1$-unconditional basis~$(e_m)_{m\in \N}$ such that $E^*$ is separable.
Let $X:=\left(\bigoplus_{m\in \N}X_m\right)_{E}$ be the corresponding $E$-sum and $\pi_m:X\to X_m$ be the canonical projection for all $m\in \N$.
Then a sequence $(y_j)_{j\in \N}$ in~$X$ is weakly null if and only if it is bounded and 
the sequence $(\pi_m(y_{j}))_{j\in \N}$ is weakly null in~$X_m$ for every $m\in \N$.
\end{lem}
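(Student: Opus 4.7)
The plan is to prove the nontrivial backward implication; the forward direction is immediate since each canonical projection $\pi_m$ is a bounded linear operator and weakly null sequences are automatically bounded. So assume $(y_j)_{j\in \N}$ is bounded with $C:=\sup_{j}\|y_j\|_X<\infty$ and $(\pi_m(y_j))_{j\in \N}$ weakly null in $X_m$ for every $m\in\N$, and take an arbitrary $\varphi\in X^*$; by Lemma~\ref{lem:E-dual}(ii) we may write $\varphi=\varphi_{\mathbf{x}^*}$ for some $\mathbf{x}^*=(x_m^*)_{m\in\N}\in \prod_{m\in\N}X_m^*$ with $|||\mathbf{x}^*|||<\infty$. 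The goal is to show $\varphi(y_j)=\sum_{m\in\N} x_m^*(\pi_m(y_j))\to 0$.

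The essential input is the following classical fact: since $(e_m)$ is a $1$-unconditional basis of~$E$ and $E^*$ is separable, $E$ contains no isomorphic copy of $\ell_1$, hence $(e_m)$ is shrinking and its biorthogonal sequence $(e_m^*)$ is a boundedly complete basis of $E^*$. The hypothesis $|||\mathbf{x}^*|||<\infty$ says precisely that the partial sums $\sum_{m=1}^{M}\|x_m^*\|e_m^*$ are norm-bounded in $E^*$, so by bounded completeness the series $\sum_{m\in\N}\|x_m^*\|e_m^*$ converges in~$E^*$. Thus for each $\epsilon>0$ there is $M\in \N$ with
$$
\left\|\sum_{m>M}\|x_m^*\|\, e_m^*\right\|_{E^*}<\frac{\epsilon}{2C}.
$$

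Next I would split $\varphi(y_j)$ into the finite head $\sum_{m\leq M} x_m^*(\pi_m(y_j))$ and the tail $\sum_{m>M} x_m^*(\pi_m(y_j))$. For the tail, by the duality between $E$ and $E^*$ realized on coefficients with respect to $(e_m)$ and $(e_m^*)$,
$$
\left|\sum_{m>M} x_m^*(\pi_m(y_j))\right|\leq \sum_{m>M}\|x_m^*\|\,\|\pi_m(y_j)\|_{X_m}\leq \left\|\sum_{m>M}\|\pi_m(y_j)\|\, e_m\right\|_{E}\cdot\left\|\sum_{m>M}\|x_m^*\|\, e_m^*\right\|_{E^*}.
$$
Because $(e_m)$ is $1$-unconditional, the first factor is bounded by $\|\sum_{m\in\N}\|\pi_m(y_j)\|\, e_m\|_E=\|y_j\|_X\leq C$, so the tail is at most $\epsilon/2$ uniformly in $j$. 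For the head, each of the finitely many terms $x_m^*(\pi_m(y_j))$ tends to $0$ as $j\to\infty$ by hypothesis, so $|\sum_{m\leq M}x_m^*(\pi_m(y_j))|<\epsilon/2$ for all sufficiently large~$j$. Combining, $|\varphi(y_j)|<\epsilon$ eventually.

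The main obstacle — and the one place where separability of $E^*$ is used in an essential way — is legitimizing the tail estimate: one needs the series $\sum\|x_m^*\|e_m^*$ to actually converge rather than merely have bounded partial sums. Once bounded completeness of $(e_m^*)$ (equivalently, the shrinking property of $(e_m)$) is invoked, the remainder of the argument is the standard finite-head/uniformly-small-tail decomposition enabled by $1$-unconditionality.
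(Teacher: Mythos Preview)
Your proof is correct and follows essentially the same route as the paper's: represent $\varphi$ via Lemma~\ref{lem:E-dual}, use separability of~$E^*$ to conclude that $(e_m^*)$ is a boundedly complete basis of~$E^*$ so that $\sum_m \|x_m^*\|\,e_m^*$ converges, then split into a finite head (handled coordinatewise) and a uniformly small tail (handled by the norm estimate $\|\varphi_{\mathbf{x_1^*}}\|_{X^*}=|||\mathbf{x_1^*}|||$). The only cosmetic difference is that you spell out the tail bound via the explicit coefficient duality between $E$ and~$E^*$, whereas the paper packages the same computation into the equality $\|\varphi_{\mathbf{x^*}}\|_{X^*}=|||\mathbf{x^*}|||$ from Lemma~\ref{lem:E-dual}(i).
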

\begin{proof} The ``only if'' part is immediate and does not require the assumption that $E^*$ is separable.

Let us prove the ``if'' part. We follow the notations of Lemma~\ref{lem:E-dual}.
Fix $\varphi\in X^*$ and write $\varphi=\varphi_{{\bf x^*}}$ for some ${\bf x^*}=(x^*_m)_{m\in \N}\in \prod_{m\in \N}X_m^*$
with $|||{\bf x^*}|||<\infty$. Since $E^*$ is separable, 
$(e_m^*)_{m\in\N}$ is a normalized $1$-unconditional boundedly-complete basis of~$E^*$
(see, e.g., \cite[Theorems~3.2.12 and 3.3.1]{alb-kal}), hence the series 
$\sum_{m\in \N} \|x_m^*\| \, e_m^*$ converges unconditionally in~$E^*$. Take any $\epsilon>0$. Choose $M\in \N$
large enough such that $\left\|\sum_{m\in F}\|x_m^*\| \, e_m^*\right\|_{E^*}\leq \frac{\epsilon}{2C}$
for every finite set $F \sub \N\setminus \{1,\dots,M\}$, where $C>0$ is a constant
such that $\|y_j\|_X\leq C$ for all $j\in \N$. Consider the elements
$$
	{\bf x_0^*}:=(x_1^*,\dots,x_M^*,0,0,\dots)
	\quad\text{and}\quad
	{\bf x_1^*}:=(\underbrace{0,\dots,0}_{\text{$M$ times}},x_{M+1}^*,x_{M+2}^*,\dots)
$$ 
of $\prod_{m\in \N}X_m^*$. Then $|||{\bf x_1^*}|||\leq \frac{\epsilon}{2C}$ and $\varphi=\varphi_{{\bf x_0^*}}+ \varphi_{{\bf x_1^*}}$, so
there is $j_0\in \N$ such that
$$
	|\varphi(y_j)| \leq |\varphi_{{\bf x_0^*}}(y_j)| + |\varphi_{{\bf x_1^*}}(y_j)|
	\leq \sum_{m=1}^M \big| x_m^*\big(\pi_m(y_j)\big)\big| + \frac{\epsilon}{2} \leq \epsilon
$$
for every $j\geq j_0$ (each $(\pi_m(y_j))_{j\in \N}$ is weakly null in~$X_m$).
It follows that $(\varphi(y_j))_{j\in \N}$ converges to~$0$. As $\varphi\in X^*$ is arbitrary, $(y_j)_{j\in \N}$ is 
weakly null.
\end{proof}

\begin{theo}\label{theo:E-sequence}
Let $(X_m)_{m\in \N}$ be a sequence of Banach spaces and $E$ be a Banach space with a 
normalized $1$-unconditional basis such that $E^*$ is separable.
If the $E$-sum $X:=(\bigoplus_{m\in \N}X_m)_E$ has property~$\mathfrak{KM}_{w}$, then $X_m$ contains no subspace isomorphic to~$\ell_1$ for all but finitely many $m\in \N$.
\end{theo}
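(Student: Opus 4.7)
My plan is to argue by contradiction. Assume $S:=\{m\in\N : X_m \text{ contains a subspace isomorphic to } \ell_1\}$ is infinite and build a weakly null sequence in~$X$ that no single $M_{n,p}$ can absorb. By James's $\ell_1$-distortion theorem (invoked as in Remark~\ref{rem:ball}), for each $m\in S$ I choose a normalized sequence $(x^m_k)_{k\in\N}$ in~$X_m$ which is an $\ell_1$-sequence with constant $3/4$. Since $\|e_m\|_E=1$, the canonical embedding $j_m\colon X_m \to X$ is isometric, so $(j_m(x^m_k))_{k\in\N}$ is an $\ell_1$-sequence in~$X$ with the same constant $3/4$.

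Fix a witnessing family $\{M_{n,p}:n,p\in\N\}$ for property~$\mathfrak{KM}_w$ and specialize to $p=1$, writing $M_n := M_{n,1}$ (each $M_n$ is $1$-weakly precompact). By Theorem~\ref{theo:Behrends-KPS}(ii), no $M_n$ can contain an $\ell_1$-sequence with constant $3/4$, so for every $m\in S$ and every $n\in\N$ the set
$$
F_{n,m}:=\{k\in\N : j_m(x^m_k)\in M_n\}
$$
is necessarily finite. Enumerate $S=\{m_1<m_2<\cdots\}$; for each $n\in\N$ pick $k_n\notin F_{n,m_n}$ and set
$$
z_n:=j_{m_n}(x^{m_n}_{k_n})\in X,
$$
so that $\|z_n\|_X=1$ and, crucially, $z_n\notin M_n$ by construction.

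The key observation is that $(z_n)_{n\in\N}$ is weakly null in~$X$: the supports $m_n$ are pairwise distinct, so for each fixed $m\in\N$ the sequence $(\pi_m(z_n))_{n\in\N}$ is eventually zero and hence weakly null in~$X_m$; as $(z_n)$ is bounded, Lemma~\ref{lem:E-weaklynull} (this is the single place where the hypothesis that $E^*$ is separable is used) forces $(z_n)$ to be weakly null in~$X$. Consequently $K:=\{z_n:n\in\N\}\cup\{0\}$ is relatively weakly compact in~$X$, in particular weakly precompact. Property~$\mathfrak{KM}_w$ then produces some $n_0\in\N$ with $K\sub M_{n_0}$; in particular $z_{n_0}\in M_{n_0}$, contradicting the choice $z_{n_0}\notin M_{n_0}$.

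The main technical point is recognizing Lemma~\ref{lem:E-weaklynull} as the tool that converts coordinatewise weak nullness into weak nullness in the $E$-sum; once this is available, the rest is a clean diagonal/absorption argument combining James's distortion theorem with the quantitative $\ell_1$-obstruction to $\epsilon$-weak precompactness recorded in Theorem~\ref{theo:Behrends-KPS}(ii). Note that $p=1$ is sufficient because James allows us to fix the $\ell_1$-constant at $3/4>1/2$; any pair $(p,c)$ with $c>\tfrac{1}{2p}$ would work equally well.
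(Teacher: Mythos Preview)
Your proof is correct, and the overall strategy matches the paper's: invoke James's $\ell_1$-distortion theorem to get uniformly good $\ell_1$-sequences in the coordinates, use Lemma~\ref{lem:E-weaklynull} (which is exactly where the separability of~$E^*$ enters) to see that bounded sequences with disjoint coordinate supports are weakly null, and then play this off against Theorem~\ref{theo:Behrends-KPS}(ii).

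The combinatorial step, however, is genuinely different and cleaner than the paper's. The paper parametrizes \emph{all} strictly increasing selections $\varphi\in\Phi=\N^\N$, assigns to each the index $n(\varphi)$ with $K_\varphi\subseteq M_{n(\varphi),2}$, and then runs a pigeonhole argument to find some $n_0$ for which $\{\varphi(n_0):n(\varphi)=n_0\}$ is infinite; this places an actual $\ell_1$-sequence with constant~$\tfrac12$ inside the $\tfrac12$-weakly precompact set~$M_{n_0,2}$, contradicting Theorem~\ref{theo:Behrends-KPS}(ii). Your approach avoids this entirely: by first observing that each $M_n$ can contain only finitely many terms of any fixed $\ell_1$-sequence with constant~$3/4$, you diagonalize directly, choosing a \emph{single} weakly null sequence $(z_n)$ with $z_n\notin M_n$, so that no $M_{n_0}$ can absorb it. This is shorter and more transparent; the paper's argument, on the other hand, has the minor conceptual advantage of explicitly exhibiting a bad $\ell_1$-sequence inside a specific $M_{n_0,p}$ rather than merely reaching a contradiction.
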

\begin{proof} Since property~$\mathfrak{KM}_w$ in inherited by subspaces (Proposition~\ref{pro:M-subspaces}), it suffices to prove that 
if $X_m$ contains a subspace isomorphic to~$\ell_1$ for every $m\in \N$, then $X$ fails property~$\mathfrak{KM}_{w}$.
So, we assume that each $X_m$ contains a subspace isomorphic to~$\ell_1$.
By James' $\ell_1$-distortion theorem (see, e.g., \cite[Theorem~10.3.1]{alb-kal}), for each $m\in \N$
there is a normalized $\ell_1$-sequence with constant~$\frac{1}{2}$ in~$X_m$, say~$(x_k^m)_{k\in \N}$. 
Suppose, by contradiction, that $X$ has property~$\mathfrak{KM}_{w}$.
Fix a family $\{M_{n,p}:n,p\in \Nat\}$ of subsets of~$X$ as in Definition~\ref{defi:KMw}.

Let $\Phi \sub \Nat^\Nat$ be the set of all strictly increasing sequences in~$\N$. 
Fix $\vf\in \Phi$. For each $j\in \Nat$, define  
$$
	y_{\vf,j}:=(0,\dots,0,\underbrace{x_{\vf(j)}^j}_{j\text{th-term}},0,\dots) \in X.
$$
The sequence $(y_{\vf,j})_{j\in \N}$ is weakly null in~$X$ (by Lemma~\ref{lem:E-weaklynull}). 
Therefore, the set $K_{\vf}:=\{y_{\vf,j}:j\in \Nat\}$ is relatively weakly compact in~$X$.
Hence $K_\vf\sub M_{n(\vf),2}$ for some $n(\vf)\in \Nat$. 

We claim that there is $n_0\in \Nat$ such that 
$$
	A_{n_0}:=\{\vf(n_0):\, \vf\in \Phi,\, n(\vf)=n_0\}
$$ 
is infinite. Indeed,
otherwise we can find $\vf_0\in \Phi$ such that 
$$
	\vf_0(n)>\max\{\vf(n):\, \vf\in \Phi,\, n(\vf)=n\}
	\quad\text{for all $n\in \N$,}
$$
which leads to a contradiction when $n=n(\vf_0)$. 

Enumerate $A_{n_0}=\{\vf_k(n_0):k\in \N\}$ for some sequence $(\vf_k)_{k\in \N}$ in~$\Phi$ with $n(\vf_k)=n_0$ for all $k\in \N$.
Define $z_k:=y_{\varphi_k,n_0}\in K_{\vf_k} \sub M_{n_0,2}$ for all $k\in \Nat$. Observe that each $z_k$ has norm~$1$ and that
for every $s\in \N$ and for all $a_1,\dots,a_s\in \mathbb R$ we have
$$
	\left\|\sum_{k=1}^s a_k z_k\right\|_{X}=\left\|\sum_{k=1}^s a_k x_{\vf_k(n_0)}^{n_0}\right\|_{X_{n_0}}
	\geq \frac{1}{2} \sum_{k=1}^s |a_k|.
$$
It follows that $M_{n_0,2}$ contains an $\ell_1$-sequence with constant $\frac{1}{2}$, which
contradicts the fact that $M_{n_0,2}$ is $\frac{1}{2}$-weakly precompact
(Theorem~\ref{theo:Behrends-KPS}(ii)). The proof is finished.
\end{proof}

The following corollary extends the corresponding result for $\ell_p$-sums and $1<p<\infty$, which was
proved in~\cite[Corollary~2.28]{kam-mer2}.

\begin{cor}\label{cor:E-sequence}
Let $(X_m)_{m\in \N}$ be a sequence of Banach spaces and $E$ be a Banach space with a 
normalized $1$-unconditional basis such that $E^*$ is separable.
If the $E$-sum $(\bigoplus_{m\in \N}X_m)_E$ has property~$\mathfrak{KM}$, then $X_m$ is reflexive
for all but finitely many $m\in \N$.
\end{cor}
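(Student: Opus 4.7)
The plan is to combine Theorem~\ref{theo:E-sequence} with the known fact that property~$\mathfrak{KM}$ forces weak sequential completeness, and then invoke Rosenthal's $\ell_1$-theorem together with the Eberlein--\v{S}mulian theorem to upgrade ``no $\ell_1$'' to reflexivity.

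First, since $X:=(\bigoplus_{m\in \N}X_m)_E$ has property~$\mathfrak{KM}$, Theorem~\ref{theo:wsc} yields that $X$ has property~$\mathfrak{KM}_w$. Applying Theorem~\ref{theo:E-sequence}, we conclude that $X_m$ contains no subspace isomorphic to~$\ell_1$ for all but finitely many $m\in \N$.

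Next, as recalled at the beginning of Subsection~\ref{subsection:wsc}, a Banach space having property~$\mathfrak{KM}$ is weakly sequentially complete (see \cite[Theorem~2.20]{kam-mer2}), so $X$ is weakly sequentially complete. For each $m\in \N$, the canonical embedding $j_m\colon X_m\to X$ identifies $X_m$ with a closed (hence weakly closed) subspace of~$X$. Weak sequential completeness is inherited by closed subspaces: any weakly Cauchy sequence in $j_m(X_m)$ is weakly Cauchy in~$X$, converges weakly to some $x\in X$, and $x$ lies in the weakly closed subspace $j_m(X_m)$. Therefore each $X_m$ is weakly sequentially complete.

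Finally, fix any $m\in \N$ such that $X_m$ contains no subspace isomorphic to~$\ell_1$. By Rosenthal's $\ell_1$-theorem, $B_{X_m}$ is weakly precompact; combined with weak sequential completeness of~$X_m$, every sequence in $B_{X_m}$ admits a weakly convergent subsequence, so $B_{X_m}$ is weakly sequentially compact. By the Eberlein--\v{S}mulian theorem $B_{X_m}$ is weakly compact, i.e., $X_m$ is reflexive. This holds for all but finitely many~$m$, as required. The argument contains no serious obstacle, being essentially a routine assembly of the preceding results; the only delicate point is the appeal to \cite[Theorem~2.20]{kam-mer2} for weak sequential completeness under property~$\mathfrak{KM}$, which is cited rather than reproved here.
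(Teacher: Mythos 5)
Your proof is correct and follows essentially the same route as the paper: Theorems~\ref{theo:wsc} and~\ref{theo:E-sequence} rule out $\ell_1$ in all but finitely many $X_m$, property~$\mathfrak{KM}$ gives weak sequential completeness of the sum (hence of the subspaces $X_m$), and the conclusion follows from the fact that a weakly sequentially complete Banach space containing no subspace isomorphic to~$\ell_1$ is reflexive. The only difference is that you prove this last standard fact in-line via Rosenthal's $\ell_1$-theorem and the Eberlein--\v{S}mulian theorem, whereas the paper simply invokes it as known.
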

\begin{proof} On the one hand, since property~$\mathfrak{KM}$ implies weak sequential completeness (see \cite[Theorem~2.20]{kam-mer2}),
each $X_m$ is weakly sequentially complete. On the other hand, by Theorems~\ref{theo:wsc} and~\ref{theo:E-sequence}, 
$X_m$ contains no subspace isomorphic to~$\ell_1$ for all but finitely many $m\in \N$. Now,
the conclusion follows from the fact that every weakly sequentially complete Banach space not containing subspaces isomorphic to~$\ell_1$ is reflexive.
\end{proof}

\begin{cor}\label{cor:l2l1}
The spaces $c_0(\ell_1)$ and $\ell_p(\ell_1)$ for $1<p<\infty$ fail property~$\mathfrak{KM}_w$. 
\end{cor}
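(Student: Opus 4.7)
The plan is to observe that this corollary is a direct application of Theorem~\ref{theo:E-sequence}, so the proof reduces to verifying the hypotheses in each case.

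First I would write each of the spaces in the form of an $E$-sum $(\bigoplus_{m\in \N}X_m)_E$ as in Theorem~\ref{theo:E-sequence}. For $c_0(\ell_1)$, take $X_m:=\ell_1$ for every $m\in \N$ and $E:=c_0$ with its usual basis. For $\ell_p(\ell_1)$ with $1<p<\infty$, take $X_m:=\ell_1$ for every $m\in \N$ and $E:=\ell_p$ with its usual basis.

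Next I would check that in each case $E$ has a normalized $1$-unconditional basis with separable dual: for $E=c_0$ we have $E^{*}=\ell_1$, and for $E=\ell_p$ with $1<p<\infty$ we have $E^{*}=\ell_q$ with $\frac{1}{p}+\frac{1}{q}=1$; both of these are separable. Clearly each $X_m=\ell_1$ contains a subspace isomorphic to~$\ell_1$ (itself), so $X_m$ contains an isomorphic copy of~$\ell_1$ for infinitely many (in fact all) $m\in \N$.

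Finally, the contrapositive of Theorem~\ref{theo:E-sequence} yields that the corresponding $E$-sum fails property~$\mathfrak{KM}_w$, which covers both $c_0(\ell_1)$ and $\ell_p(\ell_1)$ for $1<p<\infty$ simultaneously. There is no real obstacle here since all the work has been done in Theorem~\ref{theo:E-sequence}; the only thing to do is match the notation.
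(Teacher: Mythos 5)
Your proof is correct and is exactly the argument the paper intends: Corollary~\ref{cor:l2l1} is stated as an immediate consequence of Theorem~\ref{theo:E-sequence}, obtained by taking $X_m=\ell_1$ for all $m\in\N$ and $E=c_0$ or $E=\ell_p$ ($1<p<\infty$), whose usual bases are normalized and $1$-unconditional and whose duals $\ell_1$ and $\ell_q$ are separable. Your verification of the hypotheses and use of the contrapositive matches the paper's (unwritten) reasoning precisely.
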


The space $C[0,1]$ contains an isometric copy of any separable Banach space, so from the previous corollary and 
Proposition~\ref{pro:M-subspaces} we get:

\begin{cor}\label{cor:CK}
The space $C[0,1]$ fails property~$\mathfrak{KM}_w$.
\end{cor}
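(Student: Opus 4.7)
The plan is a short contrapositive argument combining the previous corollary with the universality of $C[0,1]$ for separable spaces. First, I would recall that $c_0(\ell_1)$ is separable: it is a countable $c_0$-sum of copies of $\ell_1$, each of which is separable, and the finitely supported sequences with rational coordinates in a fixed countable dense set of each $\ell_1$ factor form a countable norm-dense subset. Being separable, $c_0(\ell_1)$ embeds isometrically (in particular, linearly isomorphically onto a closed subspace) into $C[0,1]$, by the classical Banach--Mazur theorem.

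Next I would invoke Proposition~\ref{pro:M-subspaces}, which says that property~$\mathfrak{KM}_w$ passes to every (closed linear) subspace. Thus, if $C[0,1]$ had property~$\mathfrak{KM}_w$, then so would its subspace isomorphic to~$c_0(\ell_1)$, and hence $c_0(\ell_1)$ itself would have property~$\mathfrak{KM}_w$ (the property is clearly invariant under linear isomorphisms, since the defining family $\{M_{n,p}\}$ can be transported through a linear isomorphism with only a bounded rescaling of the~$\frac{1}{p}$ parameters, which is absorbed by reindexing in~$p$).

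This would contradict Corollary~\ref{cor:l2l1}, which asserts precisely that $c_0(\ell_1)$ fails property~$\mathfrak{KM}_w$. Therefore $C[0,1]$ cannot have property~$\mathfrak{KM}_w$. There is no real obstacle here: the statement is essentially a combination of Corollary~\ref{cor:l2l1}, Proposition~\ref{pro:M-subspaces}, and the fact that $C[0,1]$ is universal for separable Banach spaces; the only small detail is the isomorphic (rather than isometric) invariance of property~$\mathfrak{KM}_w$, which is straightforward from its definition.
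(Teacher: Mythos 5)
Your proof is correct and follows essentially the same route as the paper: the author also deduces Corollary~\ref{cor:CK} from the universality of $C[0,1]$ for separable Banach spaces (Banach--Mazur), combined with Corollary~\ref{cor:l2l1} and the subspace stability in Proposition~\ref{pro:M-subspaces}. Your added remarks on the separability of $c_0(\ell_1)$ and the isomorphic invariance of property~$\mathfrak{KM}_w$ are fine but unnecessary here, since the embedding is isometric and the property manifestly passes to closed subspaces.
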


The Banach space of Batt and Hiermeyer~\cite[\S3]{bat-hie} (which we will denote by~$X_{BH}$)
was the first example of a weakly sequentially complete space which is not SWCG (see~\cite[Example~2.6]{sch-whe}). 
Simpler examples like $\ell_2(\ell_1)$ were given later (see \cite[Theorem~5.10]{sch-whe-2}).
It is known that $X_{BH}$ also fails property~$\mathfrak{KM}$
(see \cite[Example~2.9]{mer-sta-2} and \cite[Theorem~2.18]{kam-mer2}). We will show that, in fact,
$X_{BH}$ fails property~$\mathfrak{KM}_w$, because it contains 
an isometric copy of~$\ell_2(\ell_1)$ (Corollary~\ref{cor:embedding-l2l1}).

The space $X_{BH}$ can be seen as a member of a class of Banach spaces built on adequate families which
goes back to Kutzarova and Troyanski~\cite{kut-tro-2} (see, e.g., \cite{arg-mer}). Recall
that a family $\cA$ of subsets of a non-empty set~$\Gamma$ is said to be {\em adequate} if
it satisfies the following conditions:
\begin{enumerate}
\item[(i)] If $A\in \cA$ and $B \sub A$, then $B\in \mathcal{A}$. 
\item[(ii)] $\{\gamma\}\in \cA$ for all $\gamma\in \Gamma$.
\item[(iii)] If $A \sub \Gamma$ and every finite subset of~$A$ belongs to~$\mathcal{A}$, then $A\in \cA$.  
\end{enumerate}
For each function $x:\Gamma\to \erre$, we define
$$
	\|x\|_{E_{1,2}(\cA)}:=
	\sup 
	\left(
	\sum_{i=1}^p 
	\left(
	\sum_{\gamma \in C_i}
	|x(\gamma)|
	\right)^2
	\right)^{1/2}\in [0,\infty],
$$
where the supremum runs over all finite collections $C_1,\dots,C_p$ of pairwise disjoint finite elements of~$\mathcal{A}$.
The Banach space $E_{1,2}(\mathcal{A})$ is 
$$
	E_{1,2}(\mathcal{A}):=\big\{x\in \erre^\Gamma: \, \|x\|_{E_{1,2}(\cA)}<\infty\big\},
$$
equipped with the pointwise operations and the norm $\|\cdot\|_{E_{1,2}(\cA)}$.
Clearly, $E_{1,2}(\mathcal{A})$ contains the linear space $c_{00}(\Gamma)$ of all finitely supported real-valued functions on~$\Gamma$.
For each $\gamma\in \Gamma$, let $e_\gamma\in c_{00}(\Gamma)$ be defined by $e_\gamma(\gamma'):=0$
for all $\gamma'\neq \gamma$, and $e_\gamma(\gamma):=1$.

\begin{pro}\label{pro:embedding-l2l1}
Let $\mathcal{A}$ be an adequate family of subsets of a non-empty set~$\Gamma$. Suppose there is a sequence $(A_n)_{n\in \N}$ in~$\mathcal{A}$ 
such that each $A_n$ is infinite and 
$$
	|\{n\in \N: \, A_n\cap A\neq \emptyset\}|\leq 1 
	\quad \text{for every $A\in \mathcal{A}$.}
$$
Then $\ell_2(\ell_1)$ is isometrically isomorphic to a subspace of $E_{1,2}(\mathcal{A})$. 
\end{pro}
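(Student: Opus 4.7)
The plan is to exhibit an explicit isometric embedding $T:\ell_{2}(\ell_{1}) \to E_{1,2}(\mathcal{A})$ sending the canonical unit vectors into the pointwise-defined basis $(e_{\gamma})_{\gamma \in \Gamma}$, and then verify the isometry on finitely supported vectors before extending by density. First, I would note that the hypothesis on $(A_{n})_{n \in \N}$ forces the $A_{n}$'s to be pairwise disjoint (each $A_{n}$ belongs to~$\mathcal{A}$, so it can meet at most one member of the family, namely itself). Fix an enumeration $A_{n}=\{\gamma_{n,k}:k\in \N\}$ for every $n\in \N$, and define $T$ on the standard basis of $\ell_{2}(\ell_{1})$ (indexed by pairs $(n,k) \in \N\times \N$) by $T(e_{n,k}):=e_{\gamma_{n,k}}$, extending by linearity to the dense subspace of finitely supported elements.

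To compute the norm of $x=\sum_{(n,k)\in F}x_{n,k}e_{\gamma_{n,k}}$ (with $F$ finite) in $E_{1,2}(\mathcal{A})$, I would prove the two inequalities separately. For the lower bound $\|Tx\|_{E_{1,2}(\mathcal{A})}\geq \|(x_{n})\|_{\ell_{2}(\ell_{1})}$, the natural test collection is $C_{n}:=\{\gamma_{n,k}:(n,k)\in F\}$ for those $n$ with $F_{n}:=\{k:(n,k)\in F\}\neq \emptyset$. Each $C_{n}$ is a finite subset of~$A_{n}$, hence belongs to~$\mathcal{A}$ by the hereditary property~(i), the $C_{n}$'s are pairwise disjoint because the $A_{n}$'s are, and plugging them into the definition of $\|\cdot\|_{E_{1,2}(\mathcal{A})}$ gives precisely $\|(x_{n})\|_{\ell_{2}(\ell_{1})}$.

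The reverse inequality is where the hypothesis on $\mathcal{A}$ really enters, and I expect it to be the main technical point. Given any finite pairwise disjoint collection $C_{1},\dots,C_{p}$ of finite elements of~$\mathcal{A}$, the assumption guarantees that each $C_{i}$ meets at most one $A_{n}$; writing $n(i)$ for this index (when it exists and $C_{i}\cap\mathrm{supp}(x)\neq\emptyset$) gives a partition $\{1,\dots,p\}=\bigsqcup_{n}I_{n}$ with $I_{n}:=\{i:n(i)=n\}$. Using pairwise disjointness of the $C_{i}$'s within each block $I_{n}$ together with the elementary inequality $\sum_{i}a_{i}^{2}\leq (\sum_{i}a_{i})^{2}$ valid for non-negative reals, the relevant contribution from each block is bounded by $\|x_{n}\|_{\ell_{1}}^{2}$, and summing over $n$ yields $\|(x_{n})\|_{\ell_{2}(\ell_{1})}^{2}$.

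Combining the two bounds shows that $T$ is an isometry on the dense subspace of finitely supported vectors. Finally, I would extend $T$ by continuity to all of~$\ell_{2}(\ell_{1})$: since $E_{1,2}(\mathcal{A})$ is complete, $T$ extends to an isometric embedding of $\ell_{2}(\ell_{1})$, and a routine verification identifies the extended $T((x_{n,k})_{n,k})$ as the pointwise function sending $\gamma_{n,k}$ to $x_{n,k}$ and all other points of~$\Gamma$ to~$0$.
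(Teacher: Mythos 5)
Your proposal is correct and takes essentially the same route as the paper's proof: the lower bound via the pairwise disjoint finite sets $\{\gamma_{n,k}:(n,k)\in F\}\subseteq A_n$, and the upper bound by using the hypothesis to split an arbitrary disjoint test collection $C_1,\dots,C_p$ into blocks $I_n$ according to the unique $A_n$ each $C_i$ can meet, followed by $\sum_i a_i^2\leq\left(\sum_i a_i\right)^2$ within each block. One cosmetic point: since each $A_n$ is only assumed infinite (possibly uncountable), you should choose a countably infinite subset $\{\gamma_{n,k}:k\in\N\}\subseteq A_n$ rather than ``fix an enumeration $A_n=\{\gamma_{n,k}:k\in\N\}$'', which is exactly what the paper does.
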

\begin{proof} Observe that the $A_n$'s are pairwise disjoint.
Choose a countable infinite set $\{\gamma_{n,m}:n,m\in \N\} \sub \Gamma$ in such a way that $A_n \supseteq \{\gamma_{n,m}:m\in \N\}$ for all $n\in \N$. 
Fix $N,M\in \N$ and $a_{n,m}\in \mathbb{R}$ for all $1\leq n \leq M$ and for all $1\leq m \leq M$. We will prove that
\begin{equation}\label{eqn:Aviles}
	\left\|\sum_{n=1}^N\sum_{m=1}^M a_{n,m} e_{\gamma_{n,m}} \right\|_{E_{1,2}(\mathcal{A})} =
	\left(\sum_{n=1}^N \left(\sum_{m=1}^M |a_{n,m}|\right)^2\right)^{1/2}.
\end{equation}  
Clearly, this implies that the space $\ell_2(\ell_1)$ is isometrically isomorphic to
the subspace $\overline{{\rm span}}\{e_{\gamma_{n,m}}:n,m\in \N\} \sub E_{1,2}(\cA)$.  

Write $x:=\sum_{n=1}^N\sum_{m=1}^M a_{n,m} e_{\gamma_{n,m}}$ and 
$$
	\Gamma_0:=\{\gamma_{n,m}:\, 1\leq n\leq N, \, 1\leq m \leq M\}.
$$
Define $\tilde{A}_n:=\{\gamma_{n,m}:1\leq m\leq M\}$ for every $1\leq n \leq N$.
Observe that each $\tilde{A}_n$ belongs to~$\mathcal{A}$ (because $\tilde{A}_n \sub A_n\in \cA$) 
and $\tilde{A}_n\cap \tilde{A}_{n'}=\emptyset$ whenever $n\neq n'$, so
$$
	\left\|x \right\|_{E_{1,2}(\mathcal{A})} 
	\geq
	\left(
	\sum_{n=1}^N 
	\left(
	\sum_{\gamma \in \tilde{A}_n} |x(\gamma)|
	\right)^2
	\right)^{1/2}
	=
	\left(
	\sum_{n=1}^N 
	\left(
	\sum_{m=1}^M |a_{n,m}|
	\right)^2
	\right)^{1/2}.
$$	

On the other hand, fix any finite collection $C_1,\dots,C_p$ of pairwise disjoint finite elements of~$\mathcal{A}$. 
For each $1\leq n\leq N$, we write 
$$
	I_n:=\{1\leq i \leq p: \, \tilde{A}_n \cap C_i \neq \emptyset\}.
$$   
Observe that the $I_n$'s are pairwise disjoint and that $\Gamma_0\cap C_i=\tilde{A}_n \cap C_i$
for every $i\in I_n$ and for every $1\leq n \leq N$. Then
\begin{multline*}
	\sum_{i=1}^p \left(\sum_{\gamma\in C_i} |x(\gamma)|\right)^2=
	\sum_{i=1}^p \left(\sum_{\gamma\in \Gamma_0 \cap C_i} |x(\gamma)|\right)^2 \\
	= \sum_{n=1}^N \sum_{i\in I_n} \left(\sum_{\gamma\in \tilde{A}_n \cap C_i} |x(\gamma)|\right)^2
	\leq  \sum_{n=1}^N \left(\sum_{i\in I_n} \sum_{\gamma\in \tilde{A}_n \cap C_i} |x(\gamma)|\right)^2 \\
	 = \sum_{n=1}^N \left(\sum_{\gamma \in \tilde{A}_n \cap (\bigcup_{i\in I_n} C_i)} |x(\gamma)|\right)^2
	 \leq \sum_{n=1}^N \left(\sum_{m=1}^M |a_{n,m}|\right)^2.
\end{multline*}
By the very definition of the norm in~$E_{1,2}(\mathcal{A})$, it follows that 
$$
	\|x\|_{E_{1,2}(\mathcal{A})} \leq \left(\sum_{n=1}^N \left(\sum_{m=1}^M |a_{n,m}|\right)^2\right)^{1/2}.
$$
This finishes the proof of~\eqref{eqn:Aviles}. 
\end{proof}

The Batt-Hiermeyer space~\cite[\S 3]{bat-hie} is defined as $X_{BH}:=E_{1,2}(\cA)$
where $\cA$ is the adequate family of all chains of the {\em dyadic tree}~$\mathcal{T}$, i.e.,
the set $\cT:=\{\emptyset\}\cup \bigcup_{n\in \Nat}\{0,1\}^n$ of all finite sequences of $0$'s and $1$'s.
By a {\em chain} of~$\cT$ we mean a set $A \sub \mathcal{T}$ such that for every $\sigma,\sigma' \in A$
we have that either $\sigma$ extends~$\sigma'$ or vice versa. The space $X_{BH}$ is a separable, weakly sequentially complete, dual Banach space.

\begin{cor}\label{cor:embedding-l2l1}
The space $X_{BH}$ contains a subspace isometrically isomorphic to~$\ell_2(\ell_1)$. 
In particular, $X_{BH}$ fails property~$\mathfrak{KM}_w$.
\end{cor}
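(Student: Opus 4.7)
The plan is to verify the hypotheses of Proposition~\ref{pro:embedding-l2l1} for the adequate family $\mathcal{A}$ of chains of the dyadic tree $\mathcal{T}$, and then conclude by combining it with Corollary~\ref{cor:l2l1} and Proposition~\ref{pro:M-subspaces}.

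First I would exhibit an explicit sequence of pairwise ``incomparable'' infinite chains in~$\mathcal{T}$. For each $n\in \N$, let $\sigma_n \in \{0,1\}^n$ be the node $(0,\dots,0,1)$ consisting of $n-1$ zeros followed by a~$1$, and define
$$
    A_n := \{\sigma_n \frown (0,\dots,0) : k\geq 0\} \subseteq \mathcal{T},
$$
where $\frown$ denotes concatenation and the tail consists of $k$ zeros. Each $A_n$ is clearly an infinite chain (its elements are totally ordered by extension), hence $A_n\in \mathcal{A}$.

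Next I would verify the key disjointness condition. If $\tau\in A_n$ and $\tau'\in A_m$ with $n<m$, then $\tau$ has length at least $n$ and a~$1$ at position~$n$, whereas $\tau'$ has length at least $m>n$ and a~$0$ at position~$n$; hence $\tau$ and $\tau'$ are incomparable in~$\mathcal{T}$. Consequently, if $A\in \mathcal{A}$ is any chain meeting both $A_n$ and $A_m$ for $n\neq m$, then $A$ would contain two incomparable elements, contradicting the fact that $A$ is a chain. Therefore $|\{n\in \N : A_n\cap A\neq \emptyset\}|\leq 1$ for all $A\in \mathcal{A}$, and the hypotheses of Proposition~\ref{pro:embedding-l2l1} are satisfied.

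Finally, Proposition~\ref{pro:embedding-l2l1} gives that $\ell_2(\ell_1)$ is isometrically isomorphic to a subspace of $X_{BH}=E_{1,2}(\mathcal{A})$. Since $\ell_2(\ell_1)$ fails property~$\mathfrak{KM}_w$ by Corollary~\ref{cor:l2l1}, and property~$\mathfrak{KM}_w$ is inherited by subspaces (Proposition~\ref{pro:M-subspaces}), it follows that $X_{BH}$ cannot have property~$\mathfrak{KM}_w$. The argument involves no real obstacle beyond producing the branches $A_n$, which is straightforward in a binary tree; the only point deserving care is checking that elements coming from different $A_n$'s are genuinely incomparable in~$\mathcal{T}$, which is immediate from the definition of the $\sigma_n$'s.
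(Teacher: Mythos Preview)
Your proof is correct and follows essentially the same approach as the paper: verify the hypothesis of Proposition~\ref{pro:embedding-l2l1} by exhibiting an explicit sequence of infinite chains in~$\mathcal{T}$ that are pairwise incomparable, and then invoke Corollary~\ref{cor:l2l1} and Proposition~\ref{pro:M-subspaces}. The only difference is cosmetic: the paper takes $A_n=\{(0,\dots,0,1,\dots,1):\text{$n$ zeros, $m$ ones},\ m\in\N\}$, whereas you take $A_n=\{(0,\dots,0,1,0,\dots,0):\text{$n-1$ zeros, then $1$, then $k$ zeros},\ k\geq 0\}$; both choices work for the same reason.
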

\begin{proof}
The last assertion follows from the first one, Corollary~\ref{cor:l2l1} and Proposition~\ref{pro:M-subspaces}.
In order to prove the first assertion it is enough to check that the family of all chains of~$\mathcal{T}$
satisfies the condition of Proposition~\ref{pro:embedding-l2l1}. 
For each $n\in \N$ and for each $m\in \N$ we define
$$
	\sigma_{n,m}:=(\underbrace{0,0,\dots,0}_{\text{$n$ times}},\underbrace{1,1,\dots,1}_{\text{$m$ times}})\in \mathcal{T}.
$$
Now, it is easy to check that the sequence of chains $(A_n)_{n\in\N}$ of~$\mathcal{T}$ defined by
$$
	A_n:=\{\sigma_{n,m}: \, m\in \N\}
$$ 
satisfies the required condition.
\end{proof}

\subsection{Property~$\mathfrak{KM}_w$ and Lebesgue-Bochner spaces}\label{subsection:strongLB}

Given a probability space $(\Omega,\Sigma,\mu)$ and a Banach space~$X$, it is unknown whether the property
of being SWPG lifts from~$X$ to~$L_1(\mu,X)$. This is indeed the case if $X$ contains no subspace isomorphic to~$\ell_1$
(see \cite[Example~3.5(ii)]{laj-rod-2}). As it was pointed out in \cite[Remark~3.4]{laj-rod-2}, a general positive
answer would imply that the property of being SWCG lifts from~$X$ to~$L_1(\mu,X)$, thus answering a long standing open question of Schl\"{u}chtermann and
Wheeler~\cite{sch-whe}. 

In this subsection we address the same type of question for property~$\mathfrak{KM}_w$. Similarly, we do not know
whether $L_1(\mu,X)$ has property~$\mathfrak{KM}_w$ whenever $X$ does. 
The following partial answer is similar to previous results for SWCG and SWPG spaces, see \cite[Theorem~2.7]{rod13} and \cite[Proposition~3.8]{laj-rod-2}. 
It involves the notion of $\delta\mathcal{C}$-set which was recalled in Definition~\ref{defi:deltaC}.

\begin{pro}\label{pro:Lebesgue-BochnerSG}
Let $(\Omega,\Sigma,\mu)$ be a probability space and $X$ be a Banach space having property~$\mathfrak{KM}_w$. Then there is a family
$\{\tilde{M}_{n,p}:n,p\in \N\}$ of subsets of~$L_1(\mu,X)$ such that:
\begin{enumerate}
\item[(i)] $\tilde{M}_{n,p}$ is $\frac{1}{p}$-weakly precompact for all $n,p\in \N$.
\item[(ii)] For each $\delta\mathcal{C}$-set $W \sub L_1(\mu,X)$ and for each $p\in \N$ there is $n\in \N$ such that $W \sub \tilde{M}_{n,p}$.
\end{enumerate}
\end{pro}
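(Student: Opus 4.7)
The plan is to lift the witnessing family for $X$'s property~$\mathfrak{KM}_w$ to $L_1(\mu,X)$ by combining Corollary~\ref{cor:Bourgain} (which controls Bochner selectors of $\epsilon$-weakly precompact sets) with the splitting built into the definition of a $\delta\mathcal{C}$-set.

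First I would fix a family $\{M_{n,p}:n,p\in \N\}$ of subsets of~$X$ witnessing property~$\mathfrak{KM}_w$; by replacing each $M_{n,p}$ with $M_{n,p}\cup\{0\}$ (which remains $\frac{1}{p}$-weakly precompact, since the union of finitely many $\frac{1}{p}$-weakly precompact sets is $\frac{1}{p}$-weakly precompact) I may assume $0\in M_{n,p}$ throughout. Then I would define
$$
	\tilde{M}_{n,p}:=L(M_{n,4p})+\frac{1}{4p}\, B_{L_1(\mu,X)}, \qquad n,p\in \N.
$$

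To verify~(i), note that Corollary~\ref{cor:Bourgain}, applied to the $\frac{1}{4p}$-weakly precompact set $M_{n,4p}$, yields that $L(M_{n,4p})$ is $\frac{1}{2p}$-weakly precompact in~$L_1(\mu,X)$. On the other hand, any sequence $(f_k)_{k\in\N}$ contained in the ball $\frac{1}{4p}B_{L_1(\mu,X)}$ satisfies $|\varphi(f_k)|\leq \frac{1}{4p}$ for every $\varphi\in B_{L_1(\mu,X)^*}$, so $\delta(f_k)\leq \frac{1}{2p}$. Hence $\frac{1}{4p}B_{L_1(\mu,X)}$ is $\frac{1}{2p}$-weakly precompact, and Lemma~\ref{lem:sum}(ii) then gives that $\tilde{M}_{n,p}$ is $\frac{1}{p}$-weakly precompact.

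For~(ii), given a $\delta\mathcal{C}$-set $W\sub L_1(\mu,X)$ and $p\in \N$, I would use the uniform integrability of~$W$ to choose $\delta>0$ with $\int_A\|f(\cdot)\|_X\, d\mu\leq \frac{1}{4p}$ for every $f\in W$ and every $A\in \Sigma$ with $\mu(A)\leq \delta$. The $\delta\mathcal{C}$ property then produces a single weakly precompact set $M\sub X$ such that, for each $f\in W$, there is $A_f\in \Sigma$ with $\mu(\Omega\setminus A_f)\leq \delta$ and $f(\omega)\in M$ for $\mu$-a.e. $\omega\in A_f$. Since $M$ is weakly precompact, property~$\mathfrak{KM}_w$ provides $n\in\N$ with $M\sub M_{n,4p}$. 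Using $0\in M_{n,4p}$, we obtain $f\chi_{A_f}\in L(M_{n,4p})$, while $\|f\chi_{\Omega\setminus A_f}\|_{L_1(\mu,X)}\leq \frac{1}{4p}$; writing $f=f\chi_{A_f}+f\chi_{\Omega\setminus A_f}$ gives $f\in \tilde{M}_{n,p}$, with $n$ depending only on $W$ and~$p$, not on~$f$.

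I do not anticipate a substantive obstacle: the hard analytic content is already encoded in Corollary~\ref{cor:Bourgain} and in the very definition of a $\delta\mathcal{C}$-set, and the one subtle point is to match the parameters (using $M_{n,4p}$ inside the selector term and $\frac{1}{4p}$ as the radius of the ball term) so that the two contributions of $\frac{1}{2p}$ add up to~$\frac{1}{p}$.
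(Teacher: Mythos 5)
Your proposal is correct and follows essentially the same route as the paper: the same definition $\tilde{M}_{n,p}=L(M_{n,4p})+\frac{1}{4p}B_{L_1(\mu,X)}$, with~(i) via Corollary~\ref{cor:Bourgain} and Lemma~\ref{lem:sum}(ii), and~(ii) by the splitting argument from Corollary~\ref{cor:deltaC} (which you inline rather than cite). Your only cosmetic deviation is adjoining $0$ to each $M_{n,p}$, whereas the paper instead applies property~$\mathfrak{KM}_w$ to the weakly precompact set $M\cup\{0\}$; both handle the same point equally well.
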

\begin{proof}
Let $\{M_{n,p}:n,p\in \N\}$ be a family of subsets of~$X$ as in Definition~\ref{defi:KMw}.
For each $n\in \N$ and for each $p\in \N$, the set
$$
	\tilde{M}_{n,p}:=L(M_{n,4p})+\frac{1}{4p}B_{L_1(\mu,X)}
$$
is $\frac{1}{p}$-weakly precompact in~$L_1(\mu,X)$, by Corollary~\ref{cor:Bourgain} and Lemma~\ref{lem:sum}(ii).
To check condition~(ii), fix a $\delta\mathcal{C}$-set $W \sub L_1(\mu,X)$ and take any $p\in \N$.
As in the proof of Corollary~\ref{cor:deltaC} (with $\epsilon=0$), there is a weakly precompact set $C \sub X$ such that
$W \sub L(C) + \frac{1}{4p}B_{L_1(\mu,X)}$. Since $C \sub M_{n,4p}$ for some $n\in \N$, we get $W \sub \tilde{M}_{n,p}$.
\end{proof}

Our next result strengthens the conclusion of the Bourgain-Maurey-Pisier theorem 
(i.e., Theorem~\ref{theo:Bourgain} with $\epsilon=0$) for Banach spaces having property~$\mathfrak{KM}_w$ and multi-functions which are ``measurable'' in a certain sense.  
The proof is similar to that of \cite[Proposition~2.8]{rod17}.

\begin{pro}\label{pro:LR}
Let $(\Omega,\Sigma,\mu)$ be a probability space, $X$ be a Banach space having property~$\mathfrak{KM}_w$ and
$F:\Omega \to \mathcal{P}(X)$ be a multi-function such that:
\begin{enumerate}
\item[(i)] $F(\omega)$ is weakly precompact  for $\mu$-a.e. $\omega\in \Omega$;
\item[(ii)] $\{\omega \in \Omega: \ F(\omega)\sub C\}\in \Sigma$ for every absolutely convex closed set $C \sub X$.
\end{enumerate}
Write
$$
	S_{1}(F):=\{f\in L_1(\mu,X): \, f(\omega)\in F(\omega) \text{ for $\mu$-a.e. $\omega\in \Omega$}\}
$$
to denote the set of all (equivalence classes of) Bochner $\mu$-integrable selectors of~$F$. Then
every uniformly integrable subset of $S_{1}(F)$ is a $\delta\mathcal{C}$-set.
\end{pro}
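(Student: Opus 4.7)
The plan is to argue that the uniform integrability of $W \subseteq S_1(F)$ is already given and forms part of the definition of a $\delta\mathcal{C}$-set; the real task is to produce, for each $\delta > 0$, a single weakly precompact set $M \subseteq X$ that contains ``most of the range'' of every $f \in W$. I will in fact produce a single set $E \in \Sigma$ with $\mu(E) \geq 1-\delta$ that works uniformly for every $f \in W$ — a conclusion slightly stronger than what the definition demands.

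First, I invoke Proposition~\ref{pro:characterization}(ii) to fix a family $\{M_{n,p}:n,p\in\N\}$ witnessing property~$\mathfrak{KM}_w$ where each $M_{n,p}$ is absolutely convex and closed, $M_{n,p}\subseteq M_{n+1,p}$ for all $n,p$, and every weakly precompact subset of~$X$ is contained in some $M_{n,p}$. For each pair $(n,p)$ the set
$$
    \Omega_{n,p}:=\{\omega\in\Omega: F(\omega)\subseteq M_{n,p}\}
$$
is measurable by hypothesis~(ii) on~$F$. Since $F(\omega)$ is weakly precompact for $\mu$-a.e.~$\omega$, for a.e.~$\omega$ and every $p\in\N$ there exists some $n$ with $F(\omega)\subseteq M_{n,p}$; monotonicity in~$n$ then gives $\mu(\Omega_{n,p})\to 1$ as $n\to\infty$.

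Fix $\delta>0$. For each $p\in\N$ I choose $n_p\in\N$ so large that $\mu(\Omega\setminus\Omega_{n_p,p})\leq \delta/2^{p+1}$, and set
$$
    E:=\bigcap_{p\in\N}\Omega_{n_p,p},\qquad M:=\bigcap_{p\in\N}M_{n_p,p}.
$$
Then $\mu(\Omega\setminus E)\leq\delta/2$. The key observation is that $M$ is weakly precompact: it is a subset of $M_{n_p,p}$, which is $\frac1p$-weakly precompact, so $M$ itself is $\frac1p$-weakly precompact for every $p\in\N$, and hence weakly precompact by Lemma~\ref{lem:epsilon-compact+} (being $\epsilon$-weakly precompact for all $\epsilon>0$).

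To finish, let $f\in W$. Since $f\in S_1(F)$, there is a $\mu$-null set $N_f$ such that $f(\omega)\in F(\omega)$ for every $\omega\in\Omega\setminus N_f$. For $\omega\in E\setminus N_f$ and every $p\in\N$, we have $f(\omega)\in F(\omega)\subseteq M_{n_p,p}$, so $f(\omega)\in M$. Setting $A_f:=E$ gives $\mu(A_f)\geq 1-\delta/2\geq 1-\delta$ and $f(\omega)\in M$ for $\mu$-a.e.~$\omega\in A_f$. Combined with the uniform integrability of~$W$ (which is assumed), this shows $W$ is a $\delta\mathcal{C}$-set. There is no real obstacle beyond recognizing that a countable intersection of $\frac1p$-weakly precompact sets is weakly precompact and that the measurability hypothesis (ii) is precisely what allows each $\Omega_{n,p}$ to lie in $\Sigma$, unlocking the Borel–Cantelli-style choice of~$E$.
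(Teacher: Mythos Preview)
Your proof is correct and follows essentially the same approach as the paper's: both use Proposition~\ref{pro:characterization} to obtain absolutely convex closed, increasing families $\{M_{n,p}\}$, define the measurable sets $\{\omega:F(\omega)\subseteq M_{n,p}\}$, choose $n_p$ by a Borel--Cantelli argument, and observe that the intersection $\bigcap_p M_{n_p,p}$ is weakly precompact via Lemma~\ref{lem:epsilon-compact+}. The only differences are cosmetic (your $\delta/2^{p+1}$ versus the paper's $\delta/2^p$, and your explicit remark that the same set $E$ works for every $f\in W$).
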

\begin{proof} It suffices to check that for every $\delta>0$ there exist 
a weakly precompact set $W \sub X$ and $A\in \Sigma$ with $\mu(A)\geq 1-\delta$ such that $F(\omega)\sub W$ for every $\omega\in A$.

Let $\{M_{n,p}:n,p\in \N\}$ be a family of absolutely convex closed subsets of~$X$ witnessing property~$\mathfrak{KM}_w$
and such that $M_{n,p}\sub M_{n+1,p}$ for all $n,p\in \N$ (Proposition~\ref{pro:characterization}). Then
$$
	A_{n,p}:=\{\omega \in \Omega: \, F(\omega) \sub M_{n,p}\} \in \Sigma
$$
for all $n,p\in \N$. Given any $p\in \Nat$, we have $\mu(\bigcup_{n\in \Nat}A_{n,p})=1$ (because $F(\omega)$
is weakly precompact for $\mu$-a.e. $\omega \in \Omega$) and $A_{n,p} \sub A_{n+1,p}$ for every $n\in \N$, 
so there is $n_p\in \Nat$ such that $\mu(A_{n_p,p})\geq 1-\frac{\delta}{2^{p}}$. 

Define $A:=\bigcap_{p\in \Nat}A_{n_p,p}\in \Sigma$, so that $\mu(A) \geq 1-\delta$. 
Observe that
$$
	F(\omega)\sub W:=\bigcap_{p\in \Nat} M_{n_p,p}
	\quad\text{for every $\omega\in A$}
$$  
and that $W$ is weakly precompact, because each $M_{n_p,p}$ is $\frac{1}{p}$-weakly precompact
(and then we can apply Lemma~\ref{lem:epsilon-compact+}).
\end{proof}

\subsection{A remark on property~(K)}\label{subsection:propertyK}

A Banach space $X$ is said to have {\em property~(K)} if every $w^*$-convergent sequence in~$X^*$
admits a convex block subsequence which converges with respect to the Mackey topology $\mu(X^*,X)$, that is,
the topology on~$X^*$ of uniform convergence on weakly compact subsets of~$X$.
This concept is due to Kwapie\'{n} and appeared first in~\cite{kal-pel}. 
Property~(K) and some related properties have also been studied in \cite{avi-rod,che-alt,dep-dod-suk,fig-alt2,fra-ple,rod19}.

Subspaces of SWCG spaces have property~(K) (see \cite[Corollary~2.3]{avi-rod}, cf. \cite[Corollary~3.8]{dep-dod-suk}).
The main result of this subsection generalizes that statement:

\begin{theo}\label{theo:KMimpliesK}
If a Banach space~$X$ has property~$\mathfrak{KM}$, then it has property~(K).
\end{theo}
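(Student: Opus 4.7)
The plan is to extract a convex block subsequence of $(x_k^*)$ whose values on each $M_{n,p}$ are controlled by iterating Theorem~\ref{theo:ACN}, and then to invoke property~$\mathfrak{KM}$ to turn this control into uniform smallness on every weakly compact subset of~$X$.

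Let $(x_k^*)_{k\in\N}$ be a $w^*$-convergent sequence in~$X^*$. Since a convex block subsequence of $(x_k^*)$ is, up to the constant shift by the $w^*$-limit~$x^*$, a convex block subsequence of $(x_k^*-x^*)$, we may assume $(x_k^*)$ is $w^*$-null. Set $C:=\sup_k\|x_k^*\|_{X^*}<\infty$ and fix a family $\{M_{n,p}:n,p\in\N\}$ witnessing property~$\mathfrak{KM}$. Put $K_{n,p}:=\overline{M_{n,p}}^{w^*}\subseteq X^{**}$, which is bounded and $w^*$-compact. Since $M_{n,p}$ is $\frac{1}{p}$-relatively weakly compact, each $x^{**}\in K_{n,p}$ decomposes as $x^{**}=x+y^{**}$ with $x\in X$ and $\|y^{**}\|_{X^{**}}\leq \frac{1}{p}$; as $x_k^*(x)\to 0$, this yields
$$
\limsup_{k\to\infty}|x^{**}(x_k^*)|\leq \frac{C}{p}\qquad\text{for every } x^{**}\in K_{n,p}.
$$

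I then construct a convex block subsequence $(y_k)_{k\in\N}$ of $(x_k^*)_{k\in\N}$ inductively. Fix a bijection $\phi\colon\N\to\N\times\N$ with $\phi(k)=(n_k,p_k)$. At step $k$, assuming $y_1,\dots,y_{k-1}$ have been chosen with combined support inside $\{1,\dots,j_k-1\}$, I look for $y_k\in {\rm co}\{x_j^*:j\geq j_k\}$ satisfying
$$
\sup_{x\in M_{n_i,p_i}}|y_k(x)|\leq \frac{C+1}{p_i}\qquad\text{for every } i=1,\dots,k.
$$
Theorem~\ref{theo:ACN} handles only one $w^*$-compact set at a time, so I enforce these $k$ constraints by iterating it. First I produce a whole sequence of convex combinations of $\{x_j^*:j\geq j_k\}$, each drawn from a later tail than the previous and each meeting the bound on~$K_{n_1,p_1}$ (by Theorem~\ref{theo:ACN} with $K=K_{n_1,p_1}$). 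This sequence still obeys the $\limsup$ estimate on~$K_{n_2,p_2}$, because convex combinations of tail elements inherit the original $\limsup$ bound; a further application of Theorem~\ref{theo:ACN} with $K=K_{n_2,p_2}$ then produces a single convex combination fulfilling the second bound as well. The first bound is automatically preserved under this extra convex combination, as it is stable under taking convex combinations. Continuing in this fashion, after $k$ rounds one obtains the desired~$y_k$.

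Given a weakly compact $H\subseteq X$ and $\epsilon>0$, choose $p\in\N$ with $(C+1)/p<\epsilon$ and, using property~$\mathfrak{KM}$, pick $n\in\N$ with $H\subseteq M_{n,p}$. Setting $k_0:=\phi^{-1}(n,p)$, the construction yields $\sup_{x\in H}|y_k(x)|<\epsilon$ for every $k\geq k_0$, so $(y_k)$ converges to $0$ in the Mackey topology~$\mu(X^*,X)$ and $X$ has property~(K). The main obstacle is the simultaneous control of $y_k$ on the finite collection $\{M_{n_i,p_i}:i\leq k\}$ at step~$k$: since Theorem~\ref{theo:ACN} treats only a single $w^*$-compact set, the iteration must be carried out carefully, verifying at each round that the earlier bounds are stable under convex combinations and that the relevant $\limsup$ hypothesis continues to hold.
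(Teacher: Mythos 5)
Your proof is correct, but it is organized differently from the paper's. Both arguments rest on the same quantitative core: since each $M_{n,p}$ is $\frac{1}{p}$-relatively weakly compact, every $x^{**}\in\overline{M_{n,p}}^{w^*}$ splits as $x+y^{**}$ with $\|y^{**}\|\leq\frac1p$, so a $w^*$-null sequence is eventually small (up to $\frac{C}{p}$) at each such $x^{**}$, and Theorem~\ref{theo:ACN} converts this pointwise eventual smallness into convex combinations that are uniformly small on the whole $w^*$-compact set. Where you diverge is the bookkeeping: you enumerate all pairs $(n,p)$ by a bijection and build a single convex block subsequence whose $k$-th term is \emph{simultaneously} controlled on the first $k$ sets, achieving this by finitely many iterations of Theorem~\ref{theo:ACN} per step (each earlier bound being stable under further convex combinations, and the $\limsup$ hypothesis passing to convex blocks drawn from advancing tails). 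The paper instead uses a multi-set corollary of Theorem~\ref{theo:ACN} in which the $n$-th block is controlled only on its \emph{own} set $\overline{M_{n,p}}^{w^*}$, producing for each fixed $p$ a nested convex block subsequence, and then diagonalizes; to make the diagonal work it must first normalize the family so that $M_{n,p}\subseteq M_{n+1,p}$, which propagates the single-set bounds to convex combinations of later terms. Your route buys a cleaner endgame --- no monotonicity normalization and no diagonal argument, since once $(n,p)=\phi(k_0)$ has appeared, all later $y_k$ are already small on $M_{n,p}$ --- at the cost of re-proving, in effect, a strengthened (cumulative) version of the paper's Corollary~\ref{cor:ACN} inside each step. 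One point to state more explicitly if you write this up: at an intermediate round $r<k$ of the inner iteration you need a whole sequence (obtained by applying Theorem~\ref{theo:ACN} repeatedly to successive tails), not just the single element your sketch mentions, so that the next round has a sequence satisfying the $\limsup$ hypothesis on the following set; you flag this issue and the verification goes through exactly as you indicate, so it is a matter of exposition rather than a gap.
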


In the proof of Theorem~\ref{theo:KMimpliesK} we will use the following
quantitative extension of \cite[Lemma~2.11]{avi-rod}:

\begin{lem}\label{lem:epsilon-K}
Let $X$ be a Banach space, $(x_j^*)_{j\in \N}$ be a $w^*$-null sequence in~$B_{X^*}$, and $\epsilon_n>\epsilon \geq 0$ for all $n\in \N$. 
If $(M_n)_{n\in \N}$ is a sequence of $\epsilon$-relatively weakly compact subsets of~$X$, then there is a convex block subsequence 
$(y_n^*)_{n\in \N}$ of~$(x_j^*)_{j\in \N}$ such that 
$$
	|y_n^*(x^{**})| \leq \epsilon_n
	\quad
	\text{for every $x^{**}\in \overline{M_n}^{w^*}$ and for every $n\in \N$.}
$$
\end{lem}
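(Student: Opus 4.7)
The plan is to reduce the statement to Corollary~\ref{cor:ACN} applied in the dual space $Z := X^*$, using the $\epsilon$-relative weak compactness of each $M_n$ to produce the required asymptotic bound $|x^{**}(x_j^*)| \lesssim \epsilon$ as $j \to \infty$.

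First, I would set up the $w^*$-compact sets. Since each $M_n$ is (by definition) bounded, the set $K_n := \overline{M_n}^{w^*} \sub X^{**}$ is $w^*$-compact, where $X^{**}$ carries its $w^*$-topology as the dual of $Z = X^*$. The sequence $(x_j^*)_{j\in \N}$ lies in $B_{X^*}$, so it is bounded in~$Z$. Thus the data $(Z,(x_j^*),(K_n))$ are of the form required by Corollary~\ref{cor:ACN}.

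Second, I would prove the crucial asymptotic bound: for every $n \in \N$ and every $x^{**} \in K_n$,
$$
\limsup_{j\to\infty}|x^{**}(x_j^*)| \leq \epsilon.
$$
Since $M_n$ is $\epsilon$-relatively weakly compact, we have $K_n \sub X+\epsilon B_{X^{**}}$, so we can write $x^{**}=x+z^{**}$ with $x\in X$ and $\|z^{**}\|_{X^{**}}\leq \epsilon$. For every $j \in \N$,
$$
|x^{**}(x_j^*)| \leq |x_j^*(x)|+|z^{**}(x_j^*)| \leq |x_j^*(x)|+\epsilon\|x_j^*\|\leq |x_j^*(x)|+\epsilon,
$$
and the first summand tends to~$0$ because $(x_j^*)_{j\in \N}$ is $w^*$-null.

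Third, for each $n\in \N$ choose $\eta_n>0$ small enough that $\epsilon+2\eta_n \leq \epsilon_n$ (possible since $\epsilon_n>\epsilon$). By the previous step, for each $n$ and each $x^{**}\in K_n$ there exists $j_{n,x^{**}}\in \N$ such that $|x^{**}(x_j^*)| \leq \epsilon+\eta_n$ for every $j\geq j_{n,x^{**}}$. Now apply Corollary~\ref{cor:ACN} with $a_n := \epsilon+\eta_n$ and tolerances $\eta_n$: this yields a convex block subsequence $(y_n^*)_{n\in \N}$ of $(x_j^*)_{j\in \N}$ such that
$$
|y_n^*(x^{**})|=|x^{**}(y_n^*)| \leq a_n+\eta_n = \epsilon+2\eta_n \leq \epsilon_n
$$
for all $x^{**}\in K_n$ and all $n\in \N$, which is exactly the desired conclusion.

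The argument is essentially bookkeeping once the right framework is identified, so I do not anticipate a genuine obstacle. The only subtle point is matching tolerances: one must be careful that Corollary~\ref{cor:ACN} produces the bound $a_n+\eta_n$ (not $a_n$), which forces the split $\epsilon_n - \epsilon \geq 2\eta_n$ rather than $\eta_n$. The construction handles this with room to spare.
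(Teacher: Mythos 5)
Your proof is correct and follows essentially the same route as the paper: the paper likewise applies Corollary~\ref{cor:ACN} with $Z=X^*$ and $K_n=\overline{M_n}^{w^*}$, using the decomposition of each $x^{**}\in \overline{M_n}^{w^*}$ as $x+z^{**}$ with $x\in X$, $\|z^{**}\|\leq\epsilon$ to get the asymptotic bound, and choosing $\eta_n>0$ with $\epsilon+2\eta_n\leq\epsilon_n$ to absorb the extra $\eta_n$ produced by the corollary. Your handling of the tolerances matches the paper's exactly.
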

\begin{proof} Fix $n\in \N$ and choose $\eta_n>0$ such that $\epsilon+2\eta_n \leq \epsilon_n$.
Observe that for every $x^{**}\in \overline{M_n}^{w^*}$ there is 
$x\in X$ such that $\|x^{**}-x\|\leq \epsilon$, hence 
$$
	|x_j^*(x^{**})|\leq \epsilon+|x_j^*(x)| \leq \epsilon + \eta_n
$$ 
for $j\in \N$ large enough (depending on~$x^{**}$). 

So, we can apply Corollary~\ref{cor:ACN} to the sequence $(x^*_j)_{j\in \N}$ (with $Z=X^*$, $K_n=\overline{M_n}^{w^*}$ and $a_n=\epsilon+\eta_n$ for all $n\in \N$)
to obtain the required convex block subsequence.
\end{proof}

\begin{proof}[Proof of Theorem~\ref{theo:KMimpliesK}]
Let $\{M_{n,p}:n,p\in \N\}$ be a family of subsets of~$X$ satisfying the conditions of Definition~\ref{defi:KM}. Observe that, for any $\epsilon\geq 0$, the union of finitely many $\epsilon$-relatively weakly compact subsets of a Banach space is $\epsilon$-relatively weakly compact. So, we can assume without loss
of generality that $M_{n,p}\sub M_{n+1,p}$ for every $n\in \N$ and for every $p\in \N$.

Fix a $w^*$-null sequence in~$B_{X^*}$ (to check property~(K) it suffices to consider such sequences).
We can apply inductively Lemma~\ref{lem:epsilon-K} to get, for each $p\in \N$, a sequence $(x^*_{n,p})_{n\in \N}$ in such a way that:
\begin{itemize}
\item $(x^*_{n,1})_{n\in \N}$ is a convex block subsequence of $(x^*_{n})_{n\in \N}$;
\item $(x^*_{n,p+1})_{n\in \N}$ is a convex block subsequence of $(x^*_{n,p})_{n\in \N}$ for all $p\in\N$;
\item $\sup\{|x_{n,p}^*(x^{**})|:\, x^{**}\in \overline{M_{n,p}}^{w^*}\} \leq \frac{1}{p}+\frac{1}{n}$ for all $n,p\in \N$. 
\end{itemize}
This inequality implies that, for each $n' \geq n$ in~$\N$ and for each $p\in \N$, we have
\begin{equation}\label{eqn:K}
	\sup_{x^{**}\in \overline{M_{n,p}}^{w^*}} |x^*(x^{**})| \leq \frac{1}{p}+\frac{1}{n'}
	\quad
	\text{for every $x^*\in {\rm co}(\{x_{m,p}^*: m \geq n'\})$.}
\end{equation}

Define $\tilde{x}^*_k:=x^*_{k,k}$ for every $k\in \N$. Then $(\tilde{x}^*_k)_{k\in \N}$ is 
a convex block subsequence of $(x^*_n)_{n\in \N}$.
We claim that $(\tilde{x}^*_k)_{k\in \N}$ is $\mu(X^*,X)$-convergent to~$0$. Indeed, let $C \sub X$ be a weakly compact set and fix $\epsilon>0$.
Choose $p\in \N$ with $\frac{1}{p}\leq \epsilon$ and then take $n_0\in \N$ such that $C \sub M_{n_0,p}$. Since $(\tilde{x}^*_k)_{k\geq p}$
is a convex block subsequence of $(x^*_{n,p})_{n\in \N}$, inequality~\eqref{eqn:K} yields
$$
	\limsup_{k\to\infty} \, \sup_{x\in C} |\tilde{x}_k^*(x)| \leq \frac{1}{p} \leq \epsilon.
$$
As $\epsilon>0$ is arbitrary, it follows that $(\tilde{x}^*_k)_{k\in \N}$ converges to~$0$ uniformly on~$C$.
\end{proof}

In \cite[Theorem~2.18]{kam-mer2} it was pointed out that every Banach space~$X$ having 
property~$\mathfrak{KM}$ is a subspace of a weakly compactly generated space, 
as a consequence of \cite[Theorem~1]{fab-mon-ziz-2}; therefore, $B_{X^*}$ is $w^*$-sequentially compact (see, e.g., \cite[p.~228, Theorem~4]{die-J}).
This fact and Theorem~\ref{theo:KMimpliesK} give the following result.

\begin{cor}\label{cor:seq-compact}
Let $X$ be a Banach space having property~$\mathfrak{KM}$. Then every bounded sequence in~$X^*$
admits a $\mu(X^*,X)$-convergent convex block subsequence.
\end{cor}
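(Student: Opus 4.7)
The plan is to combine the two facts explicitly singled out in the paragraph preceding the corollary: $w^*$-sequential compactness of $B_{X^*}$ (since $X$ embeds into a weakly compactly generated space) and property~(K) of~$X$ (Theorem~\ref{theo:KMimpliesK}).

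First I would normalize. Given a bounded sequence $(x_n^*)_{n\in \N}$ in~$X^*$, up to rescaling by a positive constant we may assume $(x_n^*)_{n\in \N} \sub B_{X^*}$, since the conclusion is invariant under multiplication by scalars. Next, using that $B_{X^*}$ is $w^*$-sequentially compact (as noted in the paragraph preceding the corollary), I extract a subsequence $(x_{n_k}^*)_{k\in \N}$ that is $w^*$-convergent to some $x^*\in B_{X^*}$.

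Now consider the $w^*$-null sequence $(x_{n_k}^*-x^*)_{k\in \N}$ in~$X^*$, which is bounded by~$2$. By Theorem~\ref{theo:KMimpliesK}, $X$ has property~(K), so there is a convex block subsequence of $(x_{n_k}^*-x^*)_{k\in \N}$ which is $\mu(X^*,X)$-convergent to~$0$. Writing this convex block subsequence as $\sum_{k\in I_j}a_k(x_{n_k}^*-x^*)$ with $\sum_{k\in I_j}a_k=1$ and $\max(I_j)<\min(I_{j+1})$, and setting
$$
y_j^*:=\sum_{k\in I_j}a_k x_{n_k}^*,
$$
we obtain a convex block subsequence of the original sequence $(x_n^*)_{n\in \N}$ such that $y_j^*-x^* \to 0$ in the Mackey topology, i.e.\ $(y_j^*)_{j\in\N}$ is $\mu(X^*,X)$-convergent to~$x^*$.

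The only subtlety is being explicit that property~(K) as defined in the paper refers to $w^*$-convergent sequences (to which we reduce via $w^*$-sequential compactness), and that taking a convex block subsequence of a subsequence of~$(x_n^*)_{n\in\N}$ is itself a convex block subsequence of~$(x_n^*)_{n\in\N}$, which is routine from the definition. No new ingredients beyond those already cited in the paper's preceding paragraph are needed.
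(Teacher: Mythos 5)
Your proof is correct and takes exactly the route the paper intends: the paper justifies the corollary by combining the $w^*$-sequential compactness of $B_{X^*}$ (from property~$\mathfrak{KM}$ implying embeddability into a weakly compactly generated space) with property~(K) from Theorem~\ref{theo:KMimpliesK}. Your rescaling into $B_{X^*}$, extraction of a $w^*$-convergent subsequence, translation by the limit~$x^*$ (legitimate since the convex coefficients sum to~$1$), and the observation that a convex block subsequence of a subsequence is a convex block subsequence of the original merely make explicit the routine reductions the paper leaves implicit.
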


In general, property~$\mathfrak{KM}_w$ does not imply property~(K). For instance, it is easy to see
that $c_0$ fails property~(K) (see, e.g., \cite[p.~4998]{avi-rod}). 

A result of \O rno and Valdivia (see, e.g., \cite[Theorem~5.53]{fab-ultimo}) states that every $\mu(X^*,X)$-convergent sequence in~$X^*$ is norm convergent
whenever the Banach space~$X$ contains no subspace isomorphic to~$\ell_1$ (and conversely). We finish the paper with
an application of that result which, in particular, shows that property~(K) can be defined via
uniform convergence on weakly precompact sets.

\begin{pro}\label{pro:Mackey-sequential}
Let $X$ be a Banach space and $(x_n^*)_{n\in \N}$ be a sequence in~$X^*$ which converges to some~$x^*\in X^*$ with respect to~$\mu(X^*,X)$.
Then $(x_n^*)_{n\in \N}$ converges to~$x^*$ uniformly on each weakly precompact subset of~$X$.
\end{pro}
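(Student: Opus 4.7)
The strategy is to reduce to the \O rno--Valdivia theorem by factoring through a Banach space not containing~$\ell_1$. By linearity I may assume $x^*=0$. Fix a weakly precompact set $C\sub X$; using Stegall's theorem (the $\epsilon=0$ case of Theorem~\ref{theo:Krein}) together with Lemma~\ref{lem:sum}(iii), I can replace $C$ by its closed absolutely convex hull, which is still weakly precompact, and so assume that $C$ is bounded, closed and absolutely convex.

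To such a set I apply the Davis--Figiel--Johnson--Pe\l{}czy\'nski interpolation. This yields a Banach space~$Z$, a bounded injective operator $j\colon Z\to X$, and a constant $\lambda>0$ with $C\sub \lambda\, j(B_Z)$. The key additional feature, valid precisely because $C$ is weakly precompact (equivalently, contains no $\ell_1$-sequence by Rosenthal's theorem), is that $Z$ itself contains no isomorphic copy of~$\ell_1$; this sharpening of the classical DFJP theorem (which produces a reflexive factor from a weakly compact set) is well known and can be found, for instance, in Diestel's \emph{Sequences and Series in Banach Spaces}.

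Next I transport the hypothesis along the adjoint $j^*\colon X^*\to Z^*$. If $K\sub Z$ is weakly compact, then $j(K)\sub X$ is weakly compact by weak-to-weak continuity of~$j$, and therefore
$$
\sup_{z\in K}\big|(j^*x_n^*)(z)\big|=\sup_{y\in j(K)}|x_n^*(y)|\longrightarrow 0,
$$
so $(j^*x_n^*)_{n\in\N}$ converges to~$0$ with respect to $\mu(Z^*,Z)$. Because $Z$ contains no copy of~$\ell_1$, the \O rno--Valdivia theorem now upgrades this Mackey convergence to norm convergence: $\|j^*x_n^*\|_{Z^*}\to 0$. Finally, since $C\sub \lambda\, j(B_Z)$, one obtains
$$
\sup_{c\in C}|x_n^*(c)|\leq \lambda\sup_{z\in B_Z}\big|x_n^*(j(z))\big|=\lambda\|j^*x_n^*\|_{Z^*}\longrightarrow 0,
$$
which is the desired uniform convergence.

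\textbf{Expected obstacle.} The only non-routine ingredient is invoking the $\ell_1$-free variant of the DFJP factorization for weakly precompact sets (as opposed to the original reflexive factorization for weakly compact sets); the remaining steps are standard adjoint bookkeeping combined with the already-quoted \O rno--Valdivia theorem.
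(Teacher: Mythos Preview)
Your proof is correct and follows essentially the same route as the paper: reduce to $x^*=0$, replace the weakly precompact set by its (closed) absolutely convex hull via Stegall's theorem, apply the DFJP factorization to obtain a space without copies of~$\ell_1$, transport the Mackey convergence through the adjoint, and invoke \O rno--Valdivia to upgrade to norm convergence. The only cosmetic differences are that the paper states the $\mu(X^*,X)$-to-$\mu(Y^*,Y)$ continuity of the adjoint directly rather than verifying it on weakly compact sets, and it cites Neidinger for the $\ell_1$-free DFJP factorization rather than Diestel.
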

\begin{proof} Of course, we can assume that $x^*=0$. Since the absolutely convex hull of a weakly precompact set is 
weakly precompact (see Subsection~\ref{subsection:aco}), it suffices to check that $(x_n^*)_{n\in \N}$ converges to~$0$ uniformly on each absolutely convex 
weakly precompact set $M \sub X$. To this end, we apply the Davis-Figiel-Johnson-Pe\l cz\'{y}nski factorization method
to~$M$ (see, e.g., \cite[p.~250, Lemma~8]{die-uhl-J}) to get a Banach space $Y$ 
and an operator~$T:Y\to X$ such that $M \sub T(B_Y)$. Since $M$ is weakly precompact, $Y$ contains no subspace
isomorphic to~$\ell_1$ (see, e.g., \cite[Corollary~1.5]{nei}).
Since $T^*:X^*\to Y^*$ is $\mu(X^*,X)$-to-$\mu(Y^*,Y)$ continuous, the sequence $(T^*(x_n^*))_{n\in\N}$
converges to~$0$ with respect to $\mu(Y^*,Y)$. By the aforementioned result of \O rno and Valdivia, 
$\|T^*(x_n^*)\|_{Y^*}\to 0$ as $n\to \infty$. Since
$$
	\sup_{x\in M}|x_n^*(x)| \leq \sup_{y\in B_Y}\big|x_n^*(T(y))\big|=\|T^*(x_n^*)\|_{Y^*}
	\quad\mbox{for all }n\in \N,
$$
we conclude that $(x_n^*)_{n\in \N}$ converges to~$0$ uniformly on~$M$.
\end{proof}

\subsection*{Acknowledgements}
The author thanks A.~Avil\'{e}s and V.~Kadets for valuable comments
on some parts of Subsection~\ref{subsection:sums}.
The research is partially supported by {\em Agencia Estatal de Investigaci\'{o}n} [MTM2017-86182-P, grant cofunded by ERDF, EU] 
and {\em Fundaci\'on S\'eneca} [20797/PI/18].


\bibliographystyle{amsplain}

\end{document}